\DeclareMathOperator{\ind}{\mathds{1}}  
\theoremstyle{plain}
\newtheorem{thm}{Theorem}
\newtheorem{lem}{Lemma}
\newtheorem{cor}{Corollary}
\newtheorem{remark}{\textbf{Remark}}
\theoremstyle{remark}
\theoremstyle{definition}
\newtheorem{defn}{Definition}
\newtheorem{assumption}{\textbf{Assumption}}
\theoremstyle{remark}
\definecolor{mygreen}{rgb}{0,0.6,0}
\definecolor{mygray}{rgb}{0.5,0.5,0.5}
\definecolor{mymauve}{rgb}{0.58,0,0.82}
\tiny\color{mygray},
\definecolor{yxc}{RGB}{255,0,0}
\definecolor{cc}{RGB}{125,0,0}
\begin{document}
	\title{Asymmetry Helps: Eigenvalue and Eigenvector Analyses of \\ Asymmetrically Perturbed Low-Rank Matrices\footnotetext{Author names are sorted alphabetically. Corresponding
author: Yuxin Chen.}}
	\author{Yuxin Chen\thanks{Department of Electrical Engineering, Princeton University, Princeton, NJ 08544, USA; Email:
		\texttt{yuxin.chen@princeton.edu}.} \\
		\and Chen Cheng\thanks{Department of Statistics, Stanford University, Stanford, CA 94305, USA; Email: \texttt{chencheng@stanford.edu}.} \\
		\and Jianqing Fan\thanks{Department of Operations Research and Financial Engineering, Princeton University, Princeton, NJ 08544, USA; Email: \texttt{jqfan@princeton.edu}.}
		}
	\date{November 2018; ~Revised February 2020}
	\maketitle
	
\begin{abstract}
	This paper is concerned with the interplay between statistical asymmetry and  spectral methods.	
	Suppose we are interested in estimating a rank-1 and symmetric matrix $\bm{M}^{\star}\in \mathbb{R}^{n\times n}$, yet only a randomly perturbed version $\bm{M}$ is observed.   The noise matrix $\bm{M}-\bm{M}^{\star}$ is composed of independent (but not necessarily homoscedastic)  entries and is, therefore, not symmetric in general. This might arise if, for example, we have two independent samples for each entry of $\bm{M}^{\star}$ and arrange them in an {\em asymmetric} fashion. 
 The aim is to estimate the leading eigenvalue and the leading eigenvector of $\bm{M}^{\star}$. 
	
	We demonstrate that the leading eigenvalue of the data matrix $\bm{M}$ can be $O(\sqrt{n})$ times more accurate (up to some log factor) than its (unadjusted) leading singular value of $\bm{M}$ in eigenvalue estimation. Moreover, the eigen-decomposition approach is fully adaptive to heteroscedasticity of noise,  without the need of any prior knowledge about the noise distributions. In a nutshell, this curious phenomenon arises since the statistical asymmetry automatically mitigates the bias of the eigenvalue approach, thus eliminating the need of careful bias correction. 	
	Additionally, we develop appealing non-asymptotic eigenvector perturbation bounds; in particular, we are able to bound the perterbation of any  linear function of the leading eigenvector of $\bm{M}$ (e.g.~entrywise eigenvector perturbation). 
	We also provide partial theory for the more general rank-$r$ case. 
	The takeaway message is this:  arranging the data samples in an asymmetric manner and performing eigen-decomposition could sometimes be quite beneficial.

\end{abstract}

	\smallskip
\noindent\textbf{Keywords:} asymmetric matrices, eigenvalue perturbation, entrywise eigenvector perturbation, linear forms of eigenvectors, heteroscedasticity.

	\tableofcontents

		\section{Introduction}

	Consider an unknown symmetric and low-rank matrix $\bm{M}^{\star} \in \mathbb{R}^{n \times n}$. What we have observed is a corrupted version
	\begin{align}
		\bm{M}=\bm{M}^{\star}+\bm{H},
	\end{align}
	with $\bm{H}$ denoting a noise matrix.  A classical  problem is concerned with estimating the leading eigenvalues and eigenspace of $\bm{M}^{\star}$ given observation $\bm{M}$.

	The current paper concentrates on a scenario where  the noise matrix $\bm{H}$ (and hence $\bm{M}$) consists of independently generated random entries and is hence {\em asymmetric} in general. This might arise, for example, when we have available multiple (e.g.~two)  samples for each entry of $\bm{M}^{\star}$ and  arrange the samples in an asymmetric fashion.         A natural approach that immediately comes to mind is based on singular value decomposition (SVD), which employs the leading singular values (resp.~subspace) of $\bm{M}$ to approximately estimate the eigenvalues (resp.~eigenspace) of $\bm{M}^{\star}$. By contrast, a much less popular alternative is based on eigen-decomposition of the asymmetric data matrix $\bm{M}$, which attempts approximation using the leading eigenvalues and eigenspace of $\bm{M}$. Given that  eigen-decomposition of an asymmetric matrix is in general not as  numerically stable as SVD, conventional wisdom often favors the SVD-based approach, unless certain symmetrization step is implemented prior to eigen-decomposition.

	When comparing these two approaches numerically, however,
a curious phenomenon arises, which largely motivates the research in this paper. Let us generate  $\bm{M}^{\star} $ as a random rank-1 matrix with leading eigenvalue $\lambda^{\star}=1$, and let $\bm{H}$ be a Gaussian random matrix whose entries are i.i.d.~$\mathcal{N}(0,\sigma^2)$ with $\sigma = 1 / \sqrt{n\log n}$.  Fig.~\ref{fig:eigenvalue-perturb}(a) compares the empirical accuracy of estimating the 1st eigenvalue of $\bm{M}^{\star}$ via the leading eigenvalue (the blue line) and via the leading singular value of $\bm{M}$ (the red line).  As it turns out,  eigen-decomposition significantly outperforms vanilla SVD in  estimating $\lambda^{\star}$, and the advantage seems increasingly more remarkable as the dimensionality $n$ grows. To facilitate comparison,  we include an additional green line in Fig.~\ref{fig:eigenvalue-perturb}(a), obtained by rescaling the red line by $2.5/\sqrt{n}$. Interestingly, this green line coincides almost perfectly with the blue line, thus suggesting orderwise gain of eigen-decomposition compared to SVD.
What is more, this phenomenon does not merely happen under i.i.d.~noise.  Similar numerical behaviors are observed in the problem of matrix completion --- as displayed in Fig.~\ref{fig:eigenvalue-perturb}(b) --- even though the components of the equivalent perturbation matrix  are apparently far from identically distributed or homoscedastic.

	\begin{figure}[htbp!]
		\centering
		\begin{tabular}{cc}
			\includegraphics[width=0.4\linewidth]{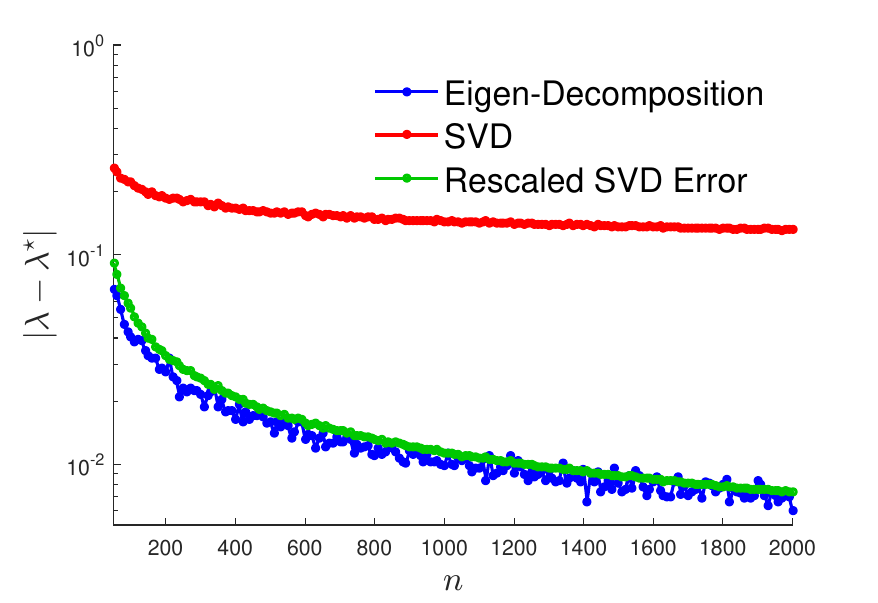} & \includegraphics[width=0.4\linewidth]{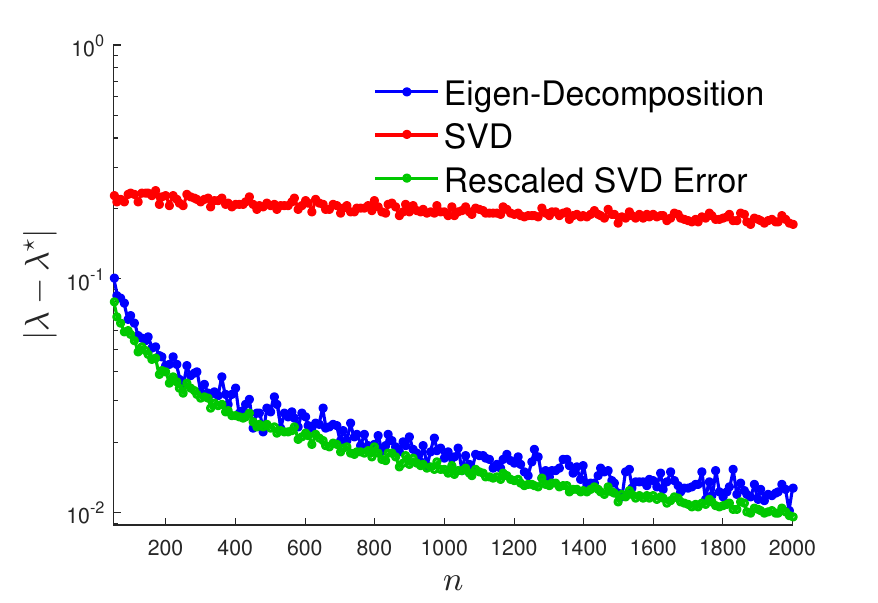}
 \tabularnewline
			(a) eigenvalue perturbation for matrix estimation & (b) eigenvalue perturbation for matrix completion \tabularnewline	
		\end{tabular}
		\caption{Numerical error $|\lambda- \lambda^{\star}|$ vs.~the matrix dimension $n$, where $\lambda$ is either the leading eigenvalue (the blue line) or the leading singular value (the red line) of $\bm{M}$. Here, (a) is the case when $\{H_{ij}\}$ are i.i.d.~$\mathcal{N}(0,\sigma^2)$ with $\sigma = 1/\sqrt{n\log n}$, and (b) is the matrix completion case with sampling rate $p =3\log n/n$, where $M_{i,j}= \frac{1}{p}M_{i,j}^{\star}$ independently with probability $p$ and 0 otherwise.   The results are averaged over 100 independent trials. The green lines are obtained by rescaling the corresponding red lines by $2.5/\sqrt{n}$.
		} \label{fig:eigenvalue-perturb}
	\end{figure}

	The goal of the current paper is thus to develop a systematic understanding of this phenomenon, that is, why  statistical asymmetry empowers eigen-decomposition and how to exploit this feature in statistical estimation. Informally, our findings suggest that: when $\bm{M}^{\star}$ is rank-1 and $\bm{H}$ is composed of zero-mean and independent (but not necessarily identically distributed or homoscedastic) entries,
	{\setlist{rightmargin=\leftmargin}
	\begin{itemize}
		\item[1.] the leading eigenvalue of  $\bm{M}$ could be $O(\sqrt{n})$ times (up to some logarithmic factor) more accurate than  the (unadjusted) leading singular value of $\bm{M}$ when estimating the 1st eigenvalue of $\bm{M}^{\star}$;\footnote{More precisely, this gain is possible when $\|\bm{H}\|$ is nearly as large as $\|\bm{M}^{\star}\|$ (up to some logarithmic factor). }
		\item[2.] the perturbation of the leading eigenvector is well-controlled along an {\em arbitrary} deterministic direction; for example, the eigenvector perturbation is well-controlled in any coordinate, indicating that
the eigenvector estimation error is  spread out across all coordinates.
	\end{itemize}
	}
	\noindent   We will further provide partial theory to accommodate the rank-$r$ case. As an important application,  such a theory allows us to estimate the leading singular value and singular vectors of an asymmetric rank-1 matrix via eigen-decomposition of a certain dilation matrix,  which also outperforms the vanilla SVD approach.
	
	We would  like to immediately remark that: for some scenarios (e.g.~the case with i.i.d.~Gaussian noise), it is possible to  adjust the leading singular value of $\bm{M}$ to obtain the same accuracy as the leading eigenvalue of $\bm{M}$. As it turns out, the advantages of the eigen-decomposition approach may become more evident in the presence of heteroscedasticity --- the case where the noise has location-varying and unknown variance. We shall elaborate on this  point in Section \ref{sec:eigenvalue-perturb-rank1}.

All in all,   when it comes to low-rank matrix estimation, arranging the observed matrix samples in an asymmetric manner and invoking eigen-decomposition properly could sometimes be statistically beneficial.

	\section{Problem formulation}

	\subsection{Models and assumptions}

	In this section, we formally introduce our models and assumptions.  Consider a symmetric and low-rank matrix $\bm{M}^{\star}  =[M^{\star}_{ij}]_{1\leq i,j\leq  n} \in \mathbb{R}^{n \times n}$.  		Suppose we are given a random copy of $\bm{M}^{\star} $ as follows
	\begin{equation}
		\label{eq_def_M}
		\bm{M} = \bm{M}^{\star} + \bm{H},
	\end{equation}
	where $\bm{H} = [H_{ij}]_{1\leq i,j\leq  n}$ is a random noise matrix.

	The current paper concentrates on  independent --- but not necessarily identically distributed or homoscedastic ---  noise. Specifically, we impose the following assumptions on $\bm{H}$ throughout this paper.
	%
	%
	%
	\begin{assumption}
	\label{assumption:H}
	\begin{enumerate}
		\item[]
		\item[1.] \textbf{(Independent entries)} The entries $\{H_{ij}\}_{1\leq i,j\leq n}$ are independently generated;
		\item[2.] \textbf{(Zero mean)} $\mathbb{E}[H_{ij}] = 0$ for all $1 \leq i, j \leq n$;
		\item[3.] \textbf{(Variance)} $\mathsf{Var}(H_{ij}) = \mathbb{E}\left[H_{ij}^2\right] \leq \sigma_n^2$ for all $1 \leq i, j \leq n$;
		\item[4.] \textbf{(Magnitude)}  Each $H_{ij}$ ($1 \leq i, j \leq n$) satisfies either of the following conditions:
		\begin{enumerate}
			\item[(a)] $|H_{ij}| \leq B_n$;
			\item[(b)] $H_{ij}$ has a symmetric distribution obeying $\mathbb{P}\{ |H_{ij}| > B_n \} \leq c_{\mathrm{b}} n^{-12} $ for some universal constant $c_{\mathrm{b}} > 0$.
		\end{enumerate}
	\end{enumerate}
	\end{assumption}
	\begin{remark}[Notational convention]
		In what follows, the dependency of $\sigma_n$ and $B_n$ on $n$ shall often be suppressed whenever it is clear from the context, so as to simplify notation.
	\end{remark}
	\noindent Note that we do not enforce the constraint $H_{ij}=H_{ji}$, and hence $\bm{H}$ and $\bm{M}$ are in general  asymmetric matrices. Also, Condition 3 does not require the $H_{ij}$'s to have equal variance across different locations; in fact, they can be heteroscedastic.
	In addition, while Condition 4(a) covers the class of bounded random variables,  Condition 4(b) allows us to accommodate a large family of heavy-tailed distributions (e.g.~sub-exponential distributions).
	An immediate consequence of Assumption \ref{assumption:H} is the following bound on the spectral norm $\|\bm{H}\|$ of $\bm{H}$.
	\begin{lem}
		\label{lem:H-norm}
		Under Assumption \ref{assumption:H}, there exist some universal constants $c_0, C_0>0$ such that with probability exceeding $1- C_0 n^{-10}$,
		\begin{equation}
			\| \bm{H} \| \leq c_0  \sigma \sqrt{ n \log n } + c_0 B \log n.
		\end{equation}
	\end{lem}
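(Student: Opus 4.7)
The plan is to apply the (rectangular) matrix Bernstein inequality to the decomposition
\begin{equation*}
\bm{H} \;=\; \sum_{1\leq i,j\leq n} H_{ij}\,\bm{e}_i\bm{e}_j^{\top},
\end{equation*}
which expresses $\bm{H}$ as a sum of $n^2$ independent, zero-mean, rank-one random matrices. This is the canonical way to control the spectral norm of a matrix with independent entries, and the magnitude $B$ together with the per-entry variance bound $\sigma^2$ are precisely the inputs that Bernstein needs.

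First I would reduce Condition 4(b) to the bounded case 4(a) by truncation. Define $\tilde H_{ij} := H_{ij}\,\ind\{|H_{ij}|\leq B\}$. Because the law of $H_{ij}$ is symmetric about $0$, the truncated variable $\tilde H_{ij}$ is also symmetrically distributed, hence still mean-zero; and since truncation can only shrink the second moment, $\mathbb{E}[\tilde H_{ij}^2]\leq \sigma^2$. A union bound over the $n^2$ entries gives $\tilde{\bm{H}}=\bm{H}$ with probability at least $1-c_{\mathrm{b}} n^{-10}$. Thus it suffices to establish the target inequality assuming the deterministic bound $|H_{ij}|\leq B$.

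Next, with $\bm{X}_{ij}:=H_{ij}\bm{e}_i\bm{e}_j^{\top}$, I note that $\|\bm{X}_{ij}\|=|H_{ij}|\leq B$. For the matrix variance statistic, independence and the zero-mean property yield
\begin{equation*}
\mathbb{E}\bigl[\bm{H}\bm{H}^{\top}\bigr] \;=\; \sum_{i,j}\mathbb{E}[H_{ij}^2]\,\bm{e}_i\bm{e}_i^{\top} \;\preceq\; n\sigma^2\,\bm{I}_n,
\end{equation*}
and symmetrically $\mathbb{E}[\bm{H}^{\top}\bm{H}]\preceq n\sigma^2\,\bm{I}_n$, so the variance proxy is at most $n\sigma^2$. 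The rectangular matrix Bernstein inequality then delivers, with probability at least $1-2n^{-10}$,
\begin{equation*}
\|\bm{H}\| \;\lesssim\; \sqrt{n\sigma^2\log n}\,+\,B\log n,
\end{equation*}
which is the stated bound after absorbing constants into $c_0$.

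There is no real obstacle here; the argument is essentially a direct invocation of a standard matrix concentration result. The only subtlety worth emphasizing is that, in Condition 4(b), the \emph{symmetry} of the distribution is what allows truncation to preserve the zero-mean hypothesis without incurring a bias term, and the specific tail decay rate $c_{\mathrm{b}} n^{-12}$ is exactly what is needed so that the union bound over $n^2$ entries costs only $O(n^{-10})$ in probability — matching the target failure probability.
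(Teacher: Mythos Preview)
Your proposal is correct and matches the paper's own approach exactly: the paper simply states that the lemma follows from the matrix Bernstein inequality together with a union bound to handle Condition~4(b), and you have filled in precisely those details (truncation preserving zero mean via symmetry, variance proxy $n\sigma^2$, and the rectangular Bernstein bound).
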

	\begin{proof}
		This is a standard {\em non-asymptotic} result that follows immediately from the matrix Bernstein inequality \cite{Tropp:2015:IMC:2802188.2802189} and the union bound (for Assumption 4(b)). We omit the details for conciseness.
	\end{proof}

	\subsection{Our goal}

	The aim is to develop {\em non-asymptotic} eigenvalue and eigenvector perturbation bounds under this family of random and asymmetric noise matrices. Our theoretical development is divided into two parts. Below we introduce our goal as well as some notation used throughout.

	\paragraph{\bf Rank-1 symmetric case.} For  the rank-1 case, we assume  the eigen-decomposition of $\bm{M}^{\star}$ to be
	\begin{align}
		\label{eq:M-star-eigen-rank-1-a}
		\bm{M}^\star & = \lambda^\star \bm{u}^\star  \bm{u}^{\star\top}
	\end{align}
	with $\lambda^{\star}$ and $\bm{u}^{\star}$ being its leading eigenvalue and eigenvector, respectively.
	We also denote by  $\lambda$ and $\bm{u}$ the leading eigenvalue and eigenvector of $\bm{M}$, respectively.
			The following quantities are the focal points of this paper (see Section \ref{sec:rank-1}):
	\begin{itemize}
		\item[1.] {\em Eigenvalue perturbation:} $|\lambda-\lambda^{\star}|$;
		\item[2.] {\em Perturbation of linear forms of eigenvectors:} $\min \{ | \bm{a}^{\top} ( \bm{u} - \bm{u}^{\star})|, | \bm{a}^{\top} ( \bm{u} + \bm{u}^{\star})| \}$ for any fixed unit vector $\bm{a}\in \mathbb{R}^n$;
		\item[3.] {\em Entrywise eigenvector perturbation:} $\min \{ \|\bm{u} - \bm{u}^{\star} \|_\infty, \|\bm{u} + \bm{u}^{\star} \|_\infty \}$.
	\end{itemize}

	\paragraph{\bf Rank-$r$ symmetric case.} For the general rank-$r$ case, we let the eigen-decomposition of $\bm{M}^{\star}$ be
	\begin{align}
		\label{eq:M-star-eigen}
		\bm{M}^\star & = \bm{U}^\star \bm{\Sigma}^\star \bm{U}^{\star\top},
	\end{align}
	where the columns of $\bm{U}^\star = [ \bm{u}_1^\star, \cdots,  \bm{u}_r^\star] \in \mathbb{R}^{n \times r}$ are the eigenvectors, and $\bm{\Sigma}^\star = \mathsf{diag}(\lambda_{1}^{\star}, \cdots, \lambda_{r}^{\star})
	\in \mathbb{R}^{r \times r}$ is a diagonal matrix with the eigenvalues arranged in descending order by their magnitude, i.e.~$|\lambda_1^\star| \geq \cdots \geq |\lambda_r^\star|$. We let $\lambda_{\mathrm{max}}^\star = |\lambda_1^\star|$ and $ \lambda_{\mathrm{min}}^\star = |\lambda_r^{\star}|$.
In addition,
we let the top-$r$ eigenvalues (in magnitude) of $\bm{M}$ be $\lambda_1,\cdots, \lambda_r$ (obeying $ |\lambda_1| \geq \cdots \geq |\lambda_r| $) and their corresponding normalized eigenvectors be $\bm{u}_1, \cdots, \bm{u}_r$. We will present partial eigenvalue perturbation results  for this more general case, as detailed in Section \ref{sec:main-rank-r}.

As is well-known,  eigen-decomposition can be applied to estimate the singular values and singular vectors of an asymmetric matrix $\bm{M}^{\star}$ via the standard dilation trick \cite{Tropp:2015:IMC:2802188.2802189}.  As a consequence, our results are also applicable for singular value and singular vector estimation.  See Section \ref{sec:asymmetric-rank1} for details.

	\subsection{Incoherence conditions}

	Finally, we single out an incoherence parameter that plays an important role in our theory, which captures how well the energy of the eigenvectors is spread out across all entries.
	\begin{defn}[\textbf{Incoherence parameter}] \label{def_incoherence_parameter} The incoherence parameter of
		a rank-$r$ symmetric matrix $\bm{M}^\star$ with eigen-decomposition $\bm{M}^\star = \bm{U}^\star \bm{\Sigma}^\star \bm{U}^{\star \top}$ is defined to be the smallest quantity $\mu$ obeying
	\begin{equation}
		\left\|\bm{U}^\star\right\|_{\infty} \leq
		\sqrt{\frac{\mu }{n}} ,
	\end{equation}
		where $\left\|\cdot\right\|_{\infty}$ denotes the entrywise $\ell_{\infty}$ norm.
	\end{defn}
	\begin{remark}\label{remark:alternative-incoherence}
		An alternative definition of the incoherence parameter \citep{ExactMC09,KesMonSew2010,chi2018nonconvex,chen2019noisy} is  the smallest quantity $\mu_0$ satisfying $\left\|\bm{U}^\star\right\|_{2,\infty} \leq \sqrt{ \mu_0 r / n}$.  This is a weaker assumption than  Definition \ref{def_incoherence_parameter},  as it only requires the energy of $\bm{U}^{\star}$ to be spread out across all of its rows  rather than all of its entries.
	Note that these two incoherent parameters are consistent in the rank-$1$ case; in the rank-$r$ case one has $\mu_0 \leq \mu \leq \mu_0 r$.
		
	\end{remark}

	\subsection{Notation}

	The standard basis vectors in $\mathbb{R}^n$ are denoted by $\bm{e}_1,\cdots,\bm{e}_n$. For any vector $\bm{z}$, we let $\|\bm{z}\|_2$ and $\|\bm{z}\|_{\infty}$ denote the $\ell_2$ norm and the $\ell_{\infty}$ norm of $\bm{z}$, respectively.
	For any matrix $\bm{M}$, denote by $\|\bm{M}\|$, $\|\bm{M}\|_{\mathrm{F}}$ and $\|\bm{M}\|_{\infty}$ the spectral norm, the Frobenius norm and the entrywise $\ell_{\infty}$ norm (the largest magnitude of all entries) of $\bm{M}$, respectively.
	Let $[n]:=\{1,\cdots,n\}$.
	In addition, the  notation $f(n)=O\left(g(n)\right)$ or
$f(n)\lesssim g(n)$ means that there is a constant $c>0$ such
that $\left|f(n)\right|\leq c|g(n)|$, $f(n)\gtrsim g(n)$ means that  there is a constant $c>0$ such
that $|f(n)|\geq c\left|g(n)\right|$, and $f(n)\asymp g(n)$ means that there exist constants $c_{1},c_{2}>0$
such that $c_{1}|g(n)|\leq|f(n)|\leq c_{2}|g(n)|$.

		\section{Preliminaries}
	\label{sec:preliminary}

	Before continuing, we gather several preliminary facts that will be useful throughout. The readers familiar with matrix perturbation theory may proceed directly to the main theoretical development in Section \ref{sec:rank-1}.

	\subsection{Perturbation of eigenvalues of asymmetric matrices}

	We begin with a standard result concerning eigenvalue perturbation of a diagonalizable matrix \cite{bauer1960norms}. Note that the matrices under study might be asymmetric. 
	\begin{thm}[\textbf{Bauer-Fike Theorem}] \label{thm_BauerFike} 
	Consider a diagonalizable matrix $\bm{A} \in \mathbb{R}^{n \times n}$ with eigen-decomposition $\bm{A} = \bm{V} \bm{\Lambda} \bm{V}^{-1}$, where $\bm{V} \in \mathbb{C}^{n \times n}$ is a non-singular eigenvector matrix and $\bm{\Lambda}$ is diagonal.  Let $\tilde{\lambda}$ be an eigenvalue of $\bm{A} + \bm{H}$. Then there exists an eigenvalue $\lambda$ of $\bm{A}$ such that
	\begin{equation}
		|\lambda - \tilde{\lambda}| \leq \left\|\bm{V}\right\| \left\|\bm{V}^{-1}\right\| \left\| \bm{H}\right\|.
	\end{equation}
	In addition, if $\bm{A}$ is symmetric, then there exists an eigenvalue $\lambda$ of $\bm{A}$ such that
	\begin{equation}
		|\lambda - \tilde{\lambda}| \leq \left\| \bm{H}\right\|.
	\end{equation}
	\end{thm}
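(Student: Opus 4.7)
The plan is to follow the classical resolvent argument for Bauer-Fike. First I would dispense with the trivial case where $\tilde{\lambda}$ itself is already an eigenvalue of $\bm{A}$, in which case we may simply take $\lambda = \tilde{\lambda}$ and there is nothing to prove. Otherwise $\tilde{\lambda}\bm{I} - \bm{A}$ is invertible, and I would start from the defining eigen-equation $(\bm{A} + \bm{H})\tilde{\bm{v}} = \tilde{\lambda}\tilde{\bm{v}}$ for some nonzero eigenvector $\tilde{\bm{v}}$ of $\bm{A} + \bm{H}$. Rearranging yields $(\tilde{\lambda}\bm{I} - \bm{A})\tilde{\bm{v}} = \bm{H}\tilde{\bm{v}}$, so that $\tilde{\bm{v}} = (\tilde{\lambda}\bm{I} - \bm{A})^{-1}\bm{H}\tilde{\bm{v}}$. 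Taking spectral norms on both sides and dividing by $\|\tilde{\bm{v}}\| > 0$ produces the key inequality $1 \leq \|(\tilde{\lambda}\bm{I} - \bm{A})^{-1}\|\,\|\bm{H}\|$.

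Next I would plug in the assumed eigen-decomposition to obtain $(\tilde{\lambda}\bm{I} - \bm{A})^{-1} = \bm{V}(\tilde{\lambda}\bm{I} - \bm{\Lambda})^{-1}\bm{V}^{-1}$, which is valid precisely because $\tilde{\lambda}$ is not an eigenvalue of $\bm{A}$. Since $\tilde{\lambda}\bm{I} - \bm{\Lambda}$ is diagonal with entries $\tilde{\lambda} - \lambda_i$, its inverse has spectral norm equal to $\max_i 1/|\tilde{\lambda} - \lambda_i|$, and letting $\lambda$ be the eigenvalue of $\bm{A}$ minimizing $|\tilde{\lambda} - \lambda_i|$ turns this into $1/|\tilde{\lambda} - \lambda|$. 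Sub-multiplicativity of the spectral norm then gives
\begin{equation*}
1 \;\leq\; \|\bm{V}\|\,\|\bm{V}^{-1}\|\,\frac{\|\bm{H}\|}{|\tilde{\lambda} - \lambda|},
\end{equation*}
and a direct rearrangement delivers the general bound $|\lambda - \tilde{\lambda}| \leq \|\bm{V}\|\,\|\bm{V}^{-1}\|\,\|\bm{H}\|$.

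For the symmetric refinement, I would invoke the spectral theorem: a real symmetric matrix admits an eigen-decomposition with $\bm{V}$ orthogonal, so $\|\bm{V}\| = \|\bm{V}^{-1}\| = 1$ and the preceding inequality collapses to $|\lambda - \tilde{\lambda}| \leq \|\bm{H}\|$. I do not anticipate any genuine obstacle here: the whole argument is essentially a one-line consequence of the resolvent identity combined with the elementary norm computation for diagonal matrices, with the only care being the initial case split on whether $\tilde{\lambda}$ lies in the spectrum of $\bm{A}$ (needed to justify invertibility of $\tilde{\lambda}\bm{I} - \bm{A}$).
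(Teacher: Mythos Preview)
Your argument is correct and is precisely the classical resolvent proof of the Bauer--Fike theorem. Note, however, that the paper does not supply its own proof of this statement: it is quoted as a standard result with a citation to Bauer and Fike (1960), so there is nothing to compare against beyond observing that your proof is the textbook one.
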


	%
	However, caution needs to be exercised as the Bauer-Fike Theorem  does not specify which eigenvalue of $\bm{A}$ is close to an eigenvalue of $\bm{A}+\bm{H}$. 
	Encouragingly, in the low-rank case of interest, the Bauer-Fike Theorem together with certain continuity of the spectrum allows one to localize the leading eigenvalues of the perturbed matrix.  
	%
	%
	\begin{lem}
		\label{lem:perturbation-BF-rank-r}
		Suppose $\bm{M}^{\star}$ is a rank-$r$ symmetric matrix whose top-$r$ eigenvalues obey $\big| \lambda_1^{\star} \big| \geq \cdots \geq \big|\lambda_r^{\star} \big| >0$. If $\|\bm{H}\| < \big|\lambda^{\star}_r\big|/2$, then the top-$r$ eigenvalues $\lambda_1,\cdots,\lambda_r$ of $\bm{M}=\bm{M}^{\star}+\bm{H}$, sorted by modulus, obey that: for any $1\leq l\leq r$, 
	\begin{equation}
		|\lambda_l - \lambda^{\star}_j|  \leq \left\|\bm{H}\right\| \quad \text{for some } 1\leq j\leq r. \label{eq:lambda-coarse-rank-r}
	\end{equation}
	In addition, if $r=1$, then both the leading eigenvalue and the leading eigenvector of $\bm{M}$ are  real-valued. 
	\end{lem}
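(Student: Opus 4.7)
The plan is to combine the symmetric case of the Bauer-Fike bound (Theorem \ref{thm_BauerFike}) with a homotopy/continuity argument. Define the one-parameter family $\bm{M}(t) = \bm{M}^{\star} + t\bm{H}$ for $t \in [0,1]$. Since $\bm{M}^{\star}$ is symmetric with spectrum $\{\lambda_1^{\star},\ldots,\lambda_r^{\star},0,\ldots,0\}$, the symmetric Bauer-Fike bound applied with perturbation $t\bm{H}$ guarantees that every eigenvalue of $\bm{M}(t)$ lies in the union of closed disks $B(0,\|\bm{H}\|) \cup \bigcup_{j=1}^{r} B(\lambda_j^{\star},\|\bm{H}\|)$. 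Under the hypothesis $\|\bm{H}\| < |\lambda_r^{\star}|/2$, any point in $B(0,\|\bm{H}\|)$ has modulus at most $\|\bm{H}\|$, whereas any point in a $B(\lambda_j^{\star},\|\bm{H}\|)$ has modulus at least $|\lambda_r^{\star}| - \|\bm{H}\|$, which is strictly larger than $\|\bm{H}\|$. Setting $\rho := |\lambda_r^{\star}|/2$ cleanly separates the two regimes, with no eigenvalue of $\bm{M}(t)$ ever lying on the circle $\{|z|=\rho\}$.

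Next, I would invoke the continuity of eigenvalues (as an unordered multiset in $\mathbb{C}$, coming from the continuity of the roots of the characteristic polynomial in the coefficients, which depend continuously on $t$) to conclude that the number of eigenvalues of $\bm{M}(t)$ with modulus exceeding $\rho$ is constant in $t$. This count equals $r$ at $t=0$, so it equals $r$ at $t=1$ as well. These $r$ eigenvalues must be precisely $\lambda_1,\ldots,\lambda_r$ (since every other eigenvalue of $\bm{M}$ has modulus below $\rho$, hence below $|\lambda_l|$ for each $l\leq r$), and each of them, by Bauer-Fike again, lies within $\|\bm{H}\|$ of some $\lambda_j^{\star}$ (not $0$, since its modulus exceeds $\rho>\|\bm{H}\|$), yielding \eqref{eq:lambda-coarse-rank-r}.

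For the $r=1$ addendum, the key observation is that $\bm{M}$ is a real matrix, so its non-real eigenvalues come in complex-conjugate pairs of equal modulus. If $\lambda_1$ were non-real, then $\overline{\lambda_1}$ would be a second distinct eigenvalue of $\bm{M}$ with $|\overline{\lambda_1}| = |\lambda_1| > \rho$, contradicting the conclusion (with $r=1$) that $\bm{M}$ has exactly one eigenvalue of modulus exceeding $\rho$. Hence $\lambda_1 \in \mathbb{R}$, and $\bm{M}-\lambda_1\bm{I}$ is then a real matrix with nontrivial kernel, from which a real unit eigenvector $\bm{u}$ can be extracted.

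I expect the main technical subtlety to be justifying the continuity step cleanly: one needs that the multiset of roots of a monic polynomial varies continuously (in Hausdorff distance) with its coefficients, so that no eigenvalue can jump discontinuously across the circle $\{|z|=\rho\}$ as $t$ traverses $[0,1]$. Given this, the remainder of the argument is a direct consequence of Bauer-Fike together with the elementary observation that real matrices have complex-conjugate symmetric spectra.
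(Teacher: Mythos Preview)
Your proposal is correct and follows essentially the same route as the paper: Bauer--Fike to localize the spectrum of $\bm{M}^{\star}+t\bm{H}$ in a union of disks, a continuity/homotopy argument to count how many eigenvalues stay in the outer component, and the complex-conjugate-pair observation to force $\lambda_1$ (and hence an eigenvector) to be real when $r=1$. The only cosmetic differences are that the paper presents the rank-$1$ case first and cites a reference for the continuity of the spectrum, whereas you invoke continuity of polynomial roots directly and handle general $r$ first.
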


	This result, which we establish in Appendix \ref{appendix:proof-lem:perturbation-BF},   parallels Weyl's inequality for symmetric matrices.  Note, however, that the above bound \eqref{eq:lambda-coarse-rank-r} might be quite loose for specific settings.  We will establish  much sharper perturbation bounds when $\bm{H}$ contains independent random entries (see, e.g.~Corollary \ref{cor:eigenvalue-pertubation}).

	\subsection{The Neumann trick and eigenvector perturbation}

	Next, we introduce  a classical result dubbed as the ``Neumann trick'' \cite{pmlr-v83-eldridge18a}. This theorem, which is derived based on the Neumann series for a matrix inverse, has been applied to analyze eigenvectors in various settings  \cite{erdHos2013spectral,jain2015fast,pmlr-v83-eldridge18a}. 

	\begin{thm}[\textbf{Neumann trick}] 
	\label{thm_Neumann}  
		Consider the  matrices $\bm{M}^{\star}$ and $\bm{M}$ (see~\eqref{eq:M-star-eigen} and \eqref{eq_def_M}). Suppose $\left\|\bm{H}\right\| < |\lambda_l|$ for some  $1\leq l \leq n$. Then
	\begin{equation}
		\bm{u}_l = \sum_{j=1}^r \frac{\lambda_j^\star}{\lambda_l} \big(\bm{u}_j^{\star \top} \bm{u}_l \big) \left\{  \sum_{s=0}^{\infty} \frac{1}{\lambda_l^s} \bm{H}^s\bm{u}_j^\star\right\} .
	\end{equation}
	\end{thm}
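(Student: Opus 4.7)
The plan is to derive the formula directly from the eigen-equation $\bm{M}\bm{u}_l = \lambda_l \bm{u}_l$ by isolating $\bm{u}_l$ through an appropriate matrix inverse, and then expanding that inverse as a Neumann series. First, substituting $\bm{M} = \bm{M}^\star + \bm{H}$ and rearranging yields
\begin{equation*}
\bm{M}^\star \bm{u}_l \;=\; (\lambda_l \bm{I} - \bm{H}) \bm{u}_l .
\end{equation*}
Because the hypothesis gives $\|\bm{H}\| < |\lambda_l|$, the operator $\lambda_l \bm{I} - \bm{H}$ is invertible (its eigenvalues lie in the disk of radius $\|\bm{H}\|$ around $\lambda_l$, which excludes $0$), so we can solve for $\bm{u}_l$ to obtain $\bm{u}_l = (\lambda_l \bm{I} - \bm{H})^{-1} \bm{M}^\star \bm{u}_l$.

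Second, I would plug in the spectral expansion $\bm{M}^\star = \sum_{j=1}^r \lambda_j^\star \bm{u}_j^\star \bm{u}_j^{\star\top}$, which gives
\begin{equation*}
\bm{u}_l \;=\; \sum_{j=1}^r \lambda_j^\star \bigl(\bm{u}_j^{\star\top}\bm{u}_l\bigr)\, (\lambda_l \bm{I} - \bm{H})^{-1} \bm{u}_j^\star .
\end{equation*}
Third, I would expand the resolvent as a Neumann series. Writing $\lambda_l \bm{I} - \bm{H} = \lambda_l(\bm{I} - \bm{H}/\lambda_l)$ and using the fact that $\|\bm{H}/\lambda_l\| < 1$, the identity $(\bm{I}-\bm{A})^{-1} = \sum_{s\ge 0} \bm{A}^s$ applies, yielding
\begin{equation*}
(\lambda_l \bm{I} - \bm{H})^{-1} \;=\; \frac{1}{\lambda_l}\sum_{s=0}^{\infty} \frac{1}{\lambda_l^{s}}\, \bm{H}^{s} .
\end{equation*}
Substituting this back and pulling the factor $1/\lambda_l$ out to form $\lambda_j^\star/\lambda_l$ recovers the claimed identity.

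There is no real obstacle — the argument is purely algebraic once one has the invertibility of $\lambda_l \bm{I} - \bm{H}$ and absolute convergence of the Neumann series, both of which are immediate from $\|\bm{H}\|<|\lambda_l|$. The only mild subtlety is that $\lambda_l$ and the entries of $\bm{u}_l$ may be complex for an asymmetric $\bm{M}$; however, every step above is valid over $\mathbb{C}$, and the series converges absolutely in operator norm, so interchanging the sum with the matrix-vector product is justified. This is precisely where the condition $\|\bm{H}\|<|\lambda_l|$ is used, and it will also be the driver of all quantitative control in later applications of this identity.
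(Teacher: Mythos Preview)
Your proposal is correct and follows essentially the same route as the paper: rearrange the eigen-equation to $\bm{M}^{\star}\bm{u}_l=(\lambda_l\bm{I}-\bm{H})\bm{u}_l$, invert using $\|\bm{H}\|<|\lambda_l|$, insert the rank-$r$ eigen-decomposition of $\bm{M}^{\star}$, and expand the resolvent as a Neumann series. The paper's argument is identical up to the cosmetic choice of writing the inverse as $\big(\bm{I}-\tfrac{1}{\lambda_l}\bm{H}\big)^{-1}/\lambda_l$ from the outset.
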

	\begin{proof} We supply the proof in Appendix \ref{appendix:proof-thm_Neumann} for self-containedness. \end{proof}
	\begin{remark}
	In particular, if $\bm{M}^{\star}$ is a rank-1 matrix and $\|\bm{H}\|< |\lambda_1|$, then 
	\begin{equation}
		\bm{u}_1 =  \frac{\lambda_1^\star}{\lambda_1} \big(\bm{u}_1^{\star \top} \bm{u}_1 \big) \left\{  \sum_{s=0}^{\infty} \frac{1}{\lambda_1^s} \bm{H}^s\bm{u}_1^\star\right\} .
	\end{equation}
	\end{remark}

	An immediate consequence of the Neumann trick is the following lemma, which asserts that each of the top-$r$ eigenvectors of $\bm{M}$ resides almost within the  top-$r$ eigen-subspace of $\bm{M}^{\star}$, provided that $\|\bm{H}\|$ is sufficiently small. The proof is deferred to Appendix \ref{appendix:proof-lem_l2_convergence}. 
	\begin{lem} 
	\label{lem_l2_convergence-rank-r} 
	Suppose $\bm{M}^{\star}$ is a rank-$r$ symmetric matrix with $r$ non-zero eigenvalues obeying $1=\lambda_{\max}^{\star} = \big|\lambda_1^{\star} \big| \geq \cdots \geq \big|\lambda_r^{\star}\big| = \lambda_{\min}^{\star}>0$ and associated eigenvectors $\bm{u}_1^{\star},\cdots,\bm{u}_r^{\star}$. Define $\kappa \triangleq \lambda_{\mathrm{max}}^{\star} / \lambda_{\mathrm{min}}^{\star}$. 
	If $\left\|\bm{H}\right\| \leq 1/(4\kappa)$, then the top-$r$ eigenvectors $\bm{u}_1,\cdots,\bm{u}_r$ of $\bm{M}=\bm{M}^{\star}+\bm{H}$ obey 
	\begin{equation}
	\begin{aligned}
		\sum_{j=1}^r |\bm{u}_j^{\star\top}\bm{u}_l|^2 & \geq 1- \frac{64\kappa^4}{9} \left\|\bm{H}\right\|^2, \qquad  1\leq l\leq r. 
	\end{aligned}
	\end{equation}
	In addition, if $r=1$, then one further has
	\begin{align}
		\min\{\|\bm{u}_1-\bm{u}_1^{\star}\|_{2},\|\bm{u}_1+\bm{u}_1^{\star}\|_{2}\}  \leq\frac{8\sqrt{2}}{3}\|\bm{H}\|.
	\end{align}
	\end{lem}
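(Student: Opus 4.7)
The plan is to use the defining eigenvalue equation $\bm{M}\bm{u}_l = \lambda_l \bm{u}_l$ together with the structural fact that $\mathrm{range}(\bm{M}^{\star}) \subseteq \mathrm{span}(\bm{U}^{\star})$. First, I would invoke Lemma~\ref{lem:perturbation-BF-rank-r}: since $\|\bm{H}\| \leq 1/(4\kappa) = \lambda_{\min}^{\star}/4 < \lambda_{\min}^{\star}/2$, each top-$r$ eigenvalue $\lambda_l$ of $\bm{M}$ satisfies $|\lambda_l - \lambda_j^{\star}| \leq \|\bm{H}\|$ for some $j \in [r]$, yielding the quantitative lower bound $|\lambda_l| \geq \lambda_{\min}^{\star} - \|\bm{H}\| \geq 3\lambda_{\min}^{\star}/4 = 3/(4\kappa)$. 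This bound is the pivot of the whole argument, since the Neumann-type reasoning requires $|\lambda_l|$ to dominate $\|\bm{H}\|$.

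Next, introduce the orthogonal projection $\bm{P} = \bm{U}^{\star}\bm{U}^{\star\top}$ onto $\mathrm{span}(\bm{U}^{\star})$ and $\bm{P}^{\perp} = \bm{I}-\bm{P}$. Because $\bm{M}^{\star} = \bm{U}^{\star}\bm{\Sigma}^{\star}\bm{U}^{\star\top}$, one has $\bm{P}^{\perp}\bm{M}^{\star} = 0$. Left-multiplying the eigen-relation $\bm{M}^{\star}\bm{u}_l + \bm{H}\bm{u}_l = \lambda_l \bm{u}_l$ by $\bm{P}^{\perp}$ produces the key identity $\lambda_l \bm{P}^{\perp}\bm{u}_l = \bm{P}^{\perp}\bm{H}\bm{u}_l$ (equivalently, one can read this off the Neumann expansion in Theorem~\ref{thm_Neumann} by collecting the $s \geq 1$ tail and applying $\bm{P}^{\perp}$, since $\bm{P}^{\perp}\bm{u}_j^{\star} = 0$). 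Dividing by $\lambda_l$ and taking $\ell_2$ norms gives $\|\bm{P}^{\perp}\bm{u}_l\|_2 \leq \|\bm{H}\|/|\lambda_l| \leq (4\kappa/3)\|\bm{H}\|$. Pythagoras applied to the unit vector $\bm{u}_l$, together with $\bm{P}\bm{u}_l = \sum_{j=1}^r (\bm{u}_j^{\star\top}\bm{u}_l)\,\bm{u}_j^{\star}$, then delivers
$$
\sum_{j=1}^r |\bm{u}_j^{\star\top}\bm{u}_l|^2 = \|\bm{P}\bm{u}_l\|_2^2 = 1 - \|\bm{P}^{\perp}\bm{u}_l\|_2^2 \geq 1 - \tfrac{16\kappa^2}{9}\|\bm{H}\|^2,
$$
which is stronger than the stated $1 - (64\kappa^4/9)\|\bm{H}\|^2$ bound (using $\kappa \geq 1$).

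For the rank-$1$ refinement, Lemma~\ref{lem:perturbation-BF-rank-r} guarantees $\bm{u}_1$ and $\lambda_1$ are real, so $\beta := \bm{u}_1^{\star\top}\bm{u}_1 \in \mathbb{R}$ and the preceding inequality (with $\kappa=1$) yields $\beta^2 \geq 1 - (64/9)\|\bm{H}\|^2$. A routine sign-flip argument then gives
$$
\min\{\|\bm{u}_1 - \bm{u}_1^{\star}\|_2^2,\ \|\bm{u}_1 + \bm{u}_1^{\star}\|_2^2\} = 2 - 2|\beta| = \frac{2(1-\beta^2)}{1+|\beta|} \leq 2(1-\beta^2) \leq \tfrac{128}{9}\|\bm{H}\|^2,
$$
and taking square roots closes the proof with constant $8\sqrt{2}/3$. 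I do not foresee any real obstacle here: once the lower bound on $|\lambda_l|$ is in hand, the argument collapses to a couple of lines of linear algebra. The only mild bookkeeping concern is that for $r \geq 2$ the eigenvectors $\bm{u}_l$ could a priori be complex-valued, but this is harmless because $\bm{P}$ is real and the Hermitian-norm Pythagorean identity $\|\bm{u}_l\|_2^2 = \|\bm{P}\bm{u}_l\|_2^2 + \|\bm{P}^{\perp}\bm{u}_l\|_2^2$ continues to hold without modification.
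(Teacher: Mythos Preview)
Your argument is correct and in fact yields a sharper bound than the paper's ($16\kappa^2/9$ in place of $64\kappa^4/9$). The paper takes a different route: it invokes the Neumann expansion of Theorem~\ref{thm_Neumann} to obtain the explicit approximation
\[
\bm{u}_l \approx \sum_{j=1}^r \frac{\lambda_j^\star \bm{u}_j^{\star\top}\bm{u}_l}{\lambda_l}\,\bm{u}_j^\star,
\]
bounds the $\ell_2$ error of this approximation by summing the geometric series $\sum_{s\geq 1}(\|\bm{H}\|/|\lambda_l|)^s$, and only then uses the fact that the orthogonal projection onto $\mathrm{span}(\bm{U}^\star)$ is at least as good as this particular approximant. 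Your projection identity $\lambda_l\bm{P}^\perp\bm{u}_l=\bm{P}^\perp\bm{H}\bm{u}_l$ bypasses the series entirely and controls $\|\bm{P}^\perp\bm{u}_l\|_2$ in one step, which is why you save a factor of $\kappa^2$ (the paper's detour introduces an extra $|\lambda_j^\star|/|\lambda_l|\lesssim\kappa$). The paper's path has the side benefit of producing the intermediate Neumann residual bound that is reused verbatim elsewhere in the manuscript, but for the statement of this lemma alone your argument is both shorter and tighter. One small cosmetic point: with $\kappa=1$ your own inequality already gives $\beta^2\geq 1-(16/9)\|\bm{H}\|^2$, so the constant you quote ($64/9$) is a deliberate weakening to match the lemma's display; you might want to say so explicitly.
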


	\section{Perturbation analysis for the rank-1 case}
	\label{sec:rank-1}

	\subsection{Main results: the rank-1 case}
	This section presents perturbation analysis results when the truth $\bm{M}^{\star}$ is a symmetric rank-1 matrix. We shall start by presenting a master bound which, as we will see, immediately  leads to our main findings.
	%
	%

	\subsubsection{A master bound}

	Our master bound is concerned with the perturbation of linear forms of eigenvectors, as stated below.

	\begin{thm}[\textbf{Perturbation of linear forms of eigenvectors (rank-1)}] \label{thm_Linear_Forms_1}
		Consider a rank-1 symmetric matrix $\bm{M}^\star = \lambda^{\star} \bm{u}^\star \bm{u}^{\star \top}\in \mathbb{R}^{n\times n}$ with incoherence parameter $\mu$ (cf.~Definition \ref{def_incoherence_parameter}).  Suppose the noise matrix $\bm{H}$ obeys Assumption \ref{assumption:H}, and assume the existence of some sufficiently small constant $c_1>0$ such that
		\begin{equation}
			\label{Bsigma_cond_2}
			\max\left\{\sigma \sqrt{n \log n}, B \log n\right\} \leq c_1  \big| \lambda^{\star} \big| .
		\end{equation}
		Then for any fixed  vector $\bm{a} \in \mathbb{R}^{n}$ with $\|\bm{a}\|_2=1$,  with probability at least $1-O(n^{-10})$ one has
		\begin{equation}
			\left|\bm{a}^\top \left(\bm{u}-\frac{\bm{u}^{\star\top} \bm{u}}{\lambda/\lambda^{\star}} \bm{u}^\star\right) \right| \lesssim  \frac{ \max\left\{ \sigma \sqrt{n  \log n }, B \log n\right\} }{ \big| \lambda^{\star} \big| } \sqrt{\frac{\mu}{n}}.
		\end{equation}
	\end{thm}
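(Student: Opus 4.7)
The plan is to start from the Neumann series of Theorem~\ref{thm_Neumann} and reduce the problem to bounding the scalar random quantities $\bm{a}^\top \bm{H}^s \bm{u}^\star$ for each $s\geq 1$. Under the assumption \eqref{Bsigma_cond_2}, Lemma~\ref{lem:H-norm} ensures that $\|\bm{H}\| \leq c_0(\sigma\sqrt{n\log n}+B\log n) \leq |\lambda^\star|/4$ with probability $1-O(n^{-10})$, provided $c_1$ is small enough. Lemma~\ref{lem:perturbation-BF-rank-r} then gives $|\lambda|\geq |\lambda^\star|/2$, and Lemma~\ref{lem_l2_convergence-rank-r} forces $|\bm{u}^{\star\top}\bm{u}| \geq 1 - O(\|\bm{H}\|^2/|\lambda^\star|^2) = \Theta(1)$. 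Hence the scalar
\[
\alpha \;:=\; \frac{\lambda^\star\,\bm{u}^{\star\top}\bm{u}}{\lambda}
\]
satisfies $|\alpha| = \Theta(1)$ and the rank-1 Neumann expansion $\bm{u} = \alpha\sum_{s\geq 0} \bm{H}^s \bm{u}^\star/\lambda^s$ converges. Subtracting the $s=0$ term and pairing with $\bm{a}$ yields
\[
\bm{a}^\top\!\left(\bm{u} - \alpha\bm{u}^\star\right) \;=\; \alpha\sum_{s=1}^\infty \lambda^{-s}\,\bm{a}^\top \bm{H}^s \bm{u}^\star,
\]
so the target estimate reduces to a sharp per-power bound together with a geometric summation in $s$.

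The core task is then to prove, on a high-probability event, the per-power inequality
\[
|\bm{a}^\top \bm{H}^s \bm{u}^\star| \;\lesssim\; \|\bm{H}\|^{s-1}\cdot\big(\sigma\sqrt{n\log n}+B\log n\big)\sqrt{\mu/n}, \qquad s\geq 1.
\]
For $s=1$, this is a single application of Bernstein's inequality to the sum of independent, zero-mean summands $a_i\,u_j^\star\,H_{ij}$, whose variance proxy is $\sigma^2\|\bm{a}\|_2^2\|\bm{u}^\star\|_2^2 = \sigma^2$ and whose magnitude proxy is $B\|\bm{a}\|_\infty\|\bm{u}^\star\|_\infty \leq B\sqrt{\mu/n}$; the resulting estimate $\sigma\sqrt{\log n} + B\sqrt{\mu/n}\log n$ is at most the right-hand side above because $\mu\geq 1$. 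For $s\geq 2$, I plan to induct on $s$: peeling off the outermost factor $\bm{H}$, write $\bm{a}^\top \bm{H}^s \bm{u}^\star = (\bm{H}^\top \bm{a})^\top \bm{H}^{s-1}\bm{u}^\star$ and invoke a Bernstein step after decoupling $\bm{H}^\top\bm{a}$ from the inner vector $\bm{w}_{s-1} := \bm{H}^{s-1}\bm{u}^\star$. Because $\bm{w}_{s-1}$ depends on $\bm{H}$, this decoupling requires a leave-one-out construction — replacing the relevant row or column of $\bm{H}$ by zero to obtain an auxiliary matrix independent of that row — together with a bound on $\bm{w}_{s-1} - \bm{w}_{s-1}^{(i)}$ small enough to be absorbed into the inductive hypothesis. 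The incoherence of $\bm{u}^\star$ propagates through these steps via the $\ell_\infty$ control on the inner vector and is what preserves the $\sqrt{\mu/n}$ factor at every power.

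Once the per-power bound is in hand, the rest is bookkeeping:
\[
|\bm{a}^\top(\bm{u}-\alpha\bm{u}^\star)| \;\lesssim\; \sum_{s=1}^\infty \frac{\|\bm{H}\|^{s-1}(\sigma\sqrt{n\log n}+B\log n)\sqrt{\mu/n}}{|\lambda|^s} \;\lesssim\; \frac{(\sigma\sqrt{n\log n}+B\log n)\sqrt{\mu/n}}{|\lambda^\star|},
\]
using $\|\bm{H}\|/|\lambda|\leq 1/2$ to sum the geometric series and $|\alpha|=\Theta(1)$. The main obstacle is the inductive step for $s\geq 2$: the crude spectral bound $|\bm{a}^\top \bm{H}^s \bm{u}^\star|\leq \|\bm{H}\|^s$ loses exactly the $\sqrt{\mu/n}$ factor and after summation only delivers the suboptimal $O(\|\bm{H}\|^2/|\lambda^\star|^2)$, which is far too weak in the interesting regime $\|\bm{H}\|\asymp |\lambda^\star|$ covered by the theorem. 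Preserving the incoherence gain across all powers of $\bm{H}$ through a leave-one-out decoupling — which is what allows one to apply Bernstein-type concentration in each Neumann term — is the technical heart of the proof and is precisely where the asymmetry and independence of the entries of $\bm{H}$ are exploited.
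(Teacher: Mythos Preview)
Your overall architecture matches the paper exactly: expand via the Neumann trick (Theorem~\ref{thm_Neumann}), reduce to controlling $|\bm{a}^\top\bm{H}^s\bm{u}^\star|$ for each $s\geq 1$, and sum the geometric series using $|\lambda|\asymp|\lambda^\star|$. The $s=1$ Bernstein step is fine. The divergence --- and the gap --- is in how you handle $s\geq 2$.

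The paper does \emph{not} use leave-one-out. It bounds the high moments $\mathbb{E}[(\bm{a}^\top\bm{H}^s\bm{u}^\star)^k]$ directly by a combinatorial expansion (Lemma~\ref{lem_high_order_terms}): after expanding the $sk$-fold product, only index configurations in which every edge $(i_{t-1},i_t)$ is repeated at least twice survive, and a careful count of such configurations yields $|\mathbb{E}[(\bm{a}^\top\bm{H}^s\bm{u}^\star)^k]|\leq \frac{sk}{2}\max\{(Bsk)^{sk},(2n\sigma^2 sk)^{sk/2}\}(\mu/n)^{k/2}$. Markov's inequality with $sk\asymp\log n$ then gives $|\bm{a}^\top\bm{H}^s\bm{u}^\star|\leq (c_2\max\{B\log n,\sigma\sqrt{n\log n}\})^s\sqrt{\mu/n}$ uniformly for $s\leq 20\log n$ (Corollary~\ref{cor_high_order_terms}); the tail $s>20\log n$ is disposed of by the crude spectral bound.

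Your proposed leave-one-out induction does not work as written. When you peel off the outermost factor and write $\bm{a}^\top\bm{H}\cdot\bm{H}^{s-1}\bm{u}^\star$, the vector $\bm{a}^\top\bm{H}$ depends on \emph{every} entry of $\bm{H}$, not on a single row or column; so does $\bm{w}_{s-1}=\bm{H}^{s-1}\bm{u}^\star$. Zeroing out one row (or column) of $\bm{H}$ therefore does not decouple these two factors, and there is no sum of independent summands to which Bernstein can be applied. A row-$j$ leave-one-out \emph{is} the right move for the special case $\bm{a}=\bm{e}_j$ (since $\bm{e}_j^\top\bm{H}$ depends only on row $j$), and with substantial extra work one can close an induction for $\|\bm{H}^s\bm{u}^\star\|_\infty$; but the theorem requires the bound for an arbitrary fixed unit $\bm{a}$, and the entrywise bound alone only gives $|\bm{a}^\top\bm{H}^s\bm{u}^\star|\leq\|\bm{a}\|_1\|\bm{H}^s\bm{u}^\star\|_\infty$, which loses a factor of $\sqrt{n}$. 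The moment-method route in Lemma~\ref{lem_high_order_terms} handles general $\bm{a}$ at once precisely because it never needs conditional independence --- the zero-mean property kills the singletons in expectation regardless of how $\bm{a}$ is spread across coordinates. That combinatorial lemma is the missing ingredient in your plan.
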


\begin{remark}[The noise size]\label{remak:noise-size} We would like to  remark on the range of the noise size covered by our theory.
If the incoherence parameter of the  truth $\bm{M}^{\star}$ obeys $\mu\asymp 1$, then even the magnitude of the largest entry of $\bm{M}^{\star}$ cannot exceed the order of $|\lambda^{\star}|/n$. 	One can thus interpret the condition \eqref{Bsigma_cond_2} in this case as
\[
\sigma\lesssim\sqrt{\frac{n}{\log n}} \,\|\bm{M}^{\star}\|_{\infty}\quad\text{and}\qquad B\lesssim\frac{n}{\log n} \,\|\bm{M}^{\star}\|_{\infty}.
\]
In other words, the standard deviation $\sigma$ of each noise component is allowed to be substantially larger (i.e.~$\sqrt{n/\log n}$ times larger) than the magnitude of any of the true entries. In fact, this condition \eqref{Bsigma_cond_2} matches, up to some log factor, the one required for spectral methods to perform noticeably better than random guessing.
\end{remark}

	In words, Theorem \ref{thm_Linear_Forms_1} tells us that: the quantity $\frac{\bm{u}^{\star\top} \bm{u}}{\lambda/\lambda^{\star}} \bm{a}^{\top}\bm{u}^\star$ serves as a  remarkably accurate approximation  of the linear form $\bm{a}^{\top}\bm{u}$.
	In particular,  the approximation error is at most  $O(1/\sqrt{n})$ under the condition \eqref{Bsigma_cond_2} for incoherent matrices.
 Encouragingly, this approximation accuracy holds true for an arbitrary deterministic direction (reflected by  $\bm{a}$). As a consequence, one can roughly interpret Theorem \ref{thm_Linear_Forms_1} as
\begin{align}
	\label{eq:approx-u-M}
	\bm{u}\approx  \frac{\lambda^{\star}}{\lambda} \bm{u}^{\star}\bm{u}^{\star\top}\bm{u} =\frac{1}{\lambda} \bm{M}^{\star}\bm{u} ,
\end{align}
where such an approximation is fairly accurate along any fixed direction.  Compared with the identity $\bm{u}=  \frac{1}{\lambda} \bm{M}\bm{u}= \frac{1}{\lambda} (\bm{M}^{\star}+\bm{H})\bm{u}$, our results imply that $\bm{H}\bm{u}$ is exceedingly small along any fixed direction, even though $\bm{H}$ and $\bm{u}$ are highly dependent.
As we shall explain in Section \ref{sec:intuition}, this observation usually cannot happen when $\bm{H}$ is a symmetric random matrix or when one uses the leading singular vector instead, due to the significant bias resulting from symmetry.

	This master theorem has several interesting implications, as we shall elucidate momentarily.

	\subsubsection{Eigenvalue perturbation}
	\label{sec:eigenvalue-perturb-rank1}

	To begin with, Theorem \ref{thm_Linear_Forms_1} immediately yields a much sharper non-asymptotic perturbation bound  regarding the leading eigenvalue $\lambda$ of $\bm{M}$.
	
	\begin{cor}
		\label{cor:eigenvalue-pertubation}
		Under the assumptions of Theorem \ref{thm_Linear_Forms_1}, with probability at least $1-O(n^{-10})$ we have
		\begin{equation}
			\label{eq:eigenvalue-perturbation}
			\left|\lambda- \lambda^{\star} \right| \lesssim \max\left\{ \sigma \sqrt{n  \log n }, B \log n\right\} \sqrt{\frac{\mu}{n}}.
		\end{equation}
	\end{cor}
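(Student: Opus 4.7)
The plan is to derive this corollary directly from the master bound (Theorem \ref{thm_Linear_Forms_1}) by making a strategic choice of the test vector $\bm{a}$. The natural choice is $\bm{a} = \bm{u}^{\star}$, since then the linear form $\bm{a}^{\top}\bm{u}^{\star} = 1$ and the bracket inside the bound collapses in a way that exposes $\lambda - \lambda^{\star}$.

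Substituting $\bm{a} = \bm{u}^{\star}$ into Theorem \ref{thm_Linear_Forms_1}, the left-hand side becomes
\[
\left| \bm{u}^{\star\top}\bm{u} - \frac{\bm{u}^{\star\top}\bm{u}}{\lambda/\lambda^{\star}} \right|
= \big|\bm{u}^{\star\top}\bm{u}\big| \cdot \left| \frac{\lambda - \lambda^{\star}}{\lambda} \right|,
\]
so Theorem \ref{thm_Linear_Forms_1} immediately yields
\[
\big|\bm{u}^{\star\top}\bm{u}\big| \cdot \left| \frac{\lambda - \lambda^{\star}}{\lambda} \right|
\;\lesssim\; \frac{\max\{\sigma\sqrt{n\log n},\, B\log n\}}{|\lambda^{\star}|}\sqrt{\frac{\mu}{n}}.
\]
It then suffices to upper bound $|\lambda|$ and lower bound $|\bm{u}^{\star\top}\bm{u}|$ by constants times $|\lambda^{\star}|$ and $1$, respectively.

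For the denominator side, I would invoke Lemma \ref{lem_l2_convergence-rank-r} in its $r = 1$ form (where $\kappa = 1$): under the noise-size condition \eqref{Bsigma_cond_2} together with Lemma \ref{lem:H-norm}, one has $\|\bm{H}\| \ll |\lambda^{\star}|$, and in particular $\|\bm{H}\|/|\lambda^{\star}|$ is smaller than any prescribed constant for $c_1$ sufficiently small. Applying the lemma (after rescaling $\bm{M}^{\star}$ to have unit leading eigenvalue) gives $|\bm{u}^{\star\top}\bm{u}|^2 \geq 1 - O(\|\bm{H}\|^2/(\lambda^{\star})^2) \geq 1/2$, so $|\bm{u}^{\star\top}\bm{u}| \geq 1/\sqrt{2}$. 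For the numerator, Lemma \ref{lem:perturbation-BF-rank-r} applied with $r = 1$ gives $|\lambda - \lambda^{\star}| \leq \|\bm{H}\|$, whence the triangle inequality and Lemma \ref{lem:H-norm} yield $|\lambda| \leq |\lambda^{\star}| + \|\bm{H}\| \leq 2|\lambda^{\star}|$.

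Plugging these two bounds back in, the ratio $|\lambda|/|\bm{u}^{\star\top}\bm{u}|$ is at most a constant multiple of $|\lambda^{\star}|$, which cancels the $|\lambda^{\star}|$ in the denominator of the master bound and delivers
\[
|\lambda - \lambda^{\star}| \;\lesssim\; \max\{\sigma\sqrt{n\log n},\, B\log n\}\sqrt{\frac{\mu}{n}}
\]
on the intersection of the high-probability events in Theorem \ref{thm_Linear_Forms_1} and Lemma \ref{lem:H-norm}, which still has probability at least $1 - O(n^{-10})$ by a union bound. There is no real obstacle here: the only non-mechanical step is recognizing that $\bm{a} = \bm{u}^{\star}$ is exactly the direction that turns the linear-form bound into an eigenvalue bound; the rest is bookkeeping controlled by the already-established lemmas.
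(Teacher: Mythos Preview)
Your proposal is correct and follows essentially the same approach as the paper: set $\bm{a}=\bm{u}^{\star}$ in Theorem~\ref{thm_Linear_Forms_1} so that the linear-form bound collapses to $|\bm{u}^{\star\top}\bm{u}|\cdot|\lambda-\lambda^{\star}|/|\lambda|$, and then use Lemmas~\ref{lem:H-norm}, \ref{lem:perturbation-BF-rank-r} and~\ref{lem_l2_convergence-rank-r} to conclude $|\lambda|\asymp|\lambda^{\star}|$ and $|\bm{u}^{\star\top}\bm{u}|\asymp 1$. The paper simply normalizes $\lambda^{\star}=1$ first, but the argument is otherwise identical.
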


	\begin{proof}
		Without loss of generality, assume that $\lambda^{\star}=1$. Taking $\bm{a} = \bm{u}^\star$ in Theorem \ref{thm_Linear_Forms_1}, we get
	\begin{align}
		\label{eq:u-corr-UB}
		\big|\bm{u}^{\star\top}\bm{u}\big|\frac{|\lambda-1|}{\lambda}=\Big|\bm{u}^{\star\top}\bm{u}-\bm{u}^{\star\top}\bm{u}^{\star}\frac{\bm{u}^{\star\top}\bm{u}}{\lambda}\Big|
		\lesssim\max\left\{ \sigma \sqrt{n  \log n }, B \log n\right\} \sqrt{\frac{\mu}{n}}.
	\end{align}
		From Lemma \ref{lem:H-norm} and the condition \eqref{Bsigma_cond_2}, we know $\|\bm{H}\|<1/4$, which combines with Lemma \ref{lem:perturbation-BF-rank-r} and Lemma \ref{lem_l2_convergence-rank-r} yields $\lambda\asymp |\bm{u}^{\star\top}\bm{u}| \asymp 1$. Substitution into  \eqref{eq:u-corr-UB} yields
	\begin{align*}
		|\lambda - 1| \lesssim \left|\frac{\lambda}{\bm{u}^{\star\top} \bm{u}}\right| \max\left\{\sigma \sqrt{n  \log n }, B \log n\right\} \sqrt{\frac{\mu}{n}} \lesssim \max\left\{\sigma \sqrt{n  \log n }, B \log n\right\} \sqrt{\frac{\mu}{n}}.
	\end{align*}
	\end{proof}

	For the vast majority of applications we encounter, the maximum possible noise magnitude $B$  (cf.~Assumption \ref{assumption:H})  obeys $B\lesssim \sigma \sqrt{n/\log n}$, in which case the bound in  Corollary \ref{cor:eigenvalue-pertubation} simplifies to
	\begin{equation}
		\label{eq:eigenvalue-perturbation-simple}
		\left|\lambda- \lambda^{\star} \right| \lesssim  \sigma \sqrt{\mu  \log n } .
	\end{equation}
	This means that the eigenvalue estimation error is not much larger than the variability of each noise component.
	In addition, we remind the reader that for a fairly broad class of noise (see Remark \ref{remak:noise-size}), the leading eigenvalue $\lambda$ of $\bm{M}$ is guaranteed to be real-valued, an observation that has been made in Lemma \ref{lem:perturbation-BF-rank-r}.  In practice, however, one might still encounter some scenarios where $\lambda$ is complex-valued. As a result, we recommend the practitioner to use the real part of $\lambda$ as the eigenvalue estimate,  which clearly enjoys the same statistical guarantee as in Corollary \ref{cor:eigenvalue-pertubation}.

	\paragraph{Comparison to the vanilla SVD-based approach.} In order to facilitate comparison, we denote by $\lambda_{\mathsf{svd}}$ the largest singular value of $\bm{M}$, and look at $|\lambda_{\mathsf{svd}} - \lambda^{\star}|$. Combining Weyl's inequality, Lemma \ref{lem:H-norm} and the condition \eqref{Bsigma_cond_2}, we arrive at
	\begin{equation}
		|\lambda_{\mathsf{svd}} - \lambda^{\star}| \leq \|\bm{H}\| \lesssim \max\left\{\sigma \sqrt{n  \log n }, B \log n\right\}  .
	\end{equation}
	When $\mu\asymp 1$, this error bound w.r.t.~this (unadjusted) singular value could be $\sqrt{n}$ times larger than the perturbation bound \eqref{eq:eigenvalue-perturbation} derived for the leading eigenvalue. This corroborates our motivating experiments in Fig.~\ref{fig:eigenvalue-perturb}.

	\paragraph{Comparison to vanilla eigen-decomposition after symmetrization.} The reader might naturally wonder what would happen if we symmetrize the data matrix before performing eigen-decomposition. Consider, for example, the i.i.d.~Gaussian noise case where $H_{ij} \overset{\text{i.i.d.}}{\sim} \mathcal{N}(0,\sigma^2)$, and assume $\lambda^{\star}>0$ for simplicity.
	The leading eigenvalue $\lambda_{\mathsf{sym}}$  of the symmetrized matrix $(\bm{M}+\bm{M}^{\top})/2$ has been extensively studied in the literature
	\cite{furedi1981eigenvalues,yin1988limit,peche2006largest,feral2007largest,benaych2011eigenvalues,renfrew2013finite,knowles2013isotropic}.
	In particular, it has been shown (e.g.~\cite{capitaine2009largest})  that, with probability approaching one,
\begin{align}
	\label{eq:lambda-sym-approx}
	\lambda_{\mathsf{sym}}=\lambda^{\star}+\frac{n\sigma^{2}}{2\lambda^{\star}}+O\big(\sigma\sqrt{\log n}\big) .
\end{align}
If $\sigma = | \lambda^{\star} | \sqrt{1/(n\log n)} $ (which is the setting in our numerical experiment), then this can be translated into
\[
	\frac{\lambda_{\mathsf{sym}}-\lambda^{\star}}{\lambda^{\star}}=\frac{1}{2\log n} + O\left( \frac{1}{\sqrt{n}} \right).
\]
This implies that $\lambda_{\mathsf{sym}}$ suffers from a substantially larger bias than the leading eigenvalue $\lambda$ obtained without symmetrization, since in this case we have (cf.~Corollary \ref{cor:eigenvalue-pertubation})
\begin{align}
	\label{eq:lambda-accuracy-incoherent}
	\left|\frac{\lambda-\lambda^{\star}}{\lambda^{\star}}\right|\lesssim \frac{1}{\sqrt{n}}.
\end{align}

	\paragraph{Comparison to properly adjusted eigen-decomposition and SVD-based methods.}  Armed with the approximation \eqref{eq:lambda-sym-approx} in the i.i.d.~Gaussian noise case, the careful reader might naturally suggest a properly corrected estimate $\lambda_{\mathsf{sym},\mathsf{c}}$ as follows (again assuming $\lambda >0$)
\begin{equation}
	\lambda_{\mathsf{sym},\mathsf{c}}=\frac{1}{2}\Big(\lambda_{\mathsf{sym}}+\sqrt{\lambda_{\mathsf{sym}}^{2}-2n\sigma^{2}}\Big),
	\label{eq:lambda-adjust}
\end{equation}
which is a shrinkage-type estimate chosen to satisfy $\lambda_{\mathsf{sym}}= \lambda_{\mathsf{sym},\mathsf{c}} +\frac{n\sigma^{2}}{2\lambda_{\mathsf{sym},\mathsf{c}}}$. A little algebra reveals that: if $\sigma = 1 / \sqrt{n\log n}$, then
\[
	\left|\frac{ \lambda_{\mathsf{sym},\mathsf{c}} -\lambda^{\star}}{\lambda^{\star}}\right| \lesssim \frac{1}{\sqrt{n}},
\]
thus matching the estimation accuracy of $\lambda$ (cf.~\eqref{eq:lambda-accuracy-incoherent}). In addition, some sort of universality results has been established as well in the literature \citep{capitaine2009largest}, implying that the same approximation and correction are applicable to a broad family of zero-mean noise with identical variance. As we shall illustrate numerically in Section \ref{sec:applications},  this approach (i.e.~$\lambda_{\mathsf{sym},\mathsf{c}}$) performs almost identically to the one using vanilla eigen-decomposition without symmetrization.
In addition, very similar observations have been made for the SVD-based approach \citep{silverstein1994spectral,yin1988limit,peche2006largest,feral2007largest,benaych2012singular,bryc2018singular}; for the sake of brevity, we do not repeat the arguments here.

We would nevertheless like to single out a few statistical advantages of the eigen-decomposition approach without symmetrization. To begin with, $\lambda$ is obtained via vanilla eigen-decomposition, and computing it does not rely on any kind of noise statistics. This is in stark contrast to the bias correction \eqref{eq:lambda-adjust} in the presence of symmetric data, which requires prior knowledge about (or a very precise estimate of)   the noise variance $\sigma^2$. Leaving out this prior knowledge matter,  a more important issue is that the approximation formula \eqref{eq:lambda-sym-approx} assumes identical variance of noise components across all  entries (i.e.~homoscedasticity). While an approximation of this kind has been found for more general cases beyond homoscedastic noise (e.g.~\cite{bryc2018singular}), the approximation formula (e.g.~\cite[Theorem 1.1]{bryc2018singular}) becomes fairly complicated, requires  prior knowledge about all variance parameters, and is thus difficult to implement in practice.  In comparison,  the vanilla eigen-decomposition approach analyzed in Corollary \ref{cor:eigenvalue-pertubation} imposes no restriction on the noise statistics and is fully adaptive to heteroscedastic noise.

\paragraph{Lower bounds.} To complete the picture, we provide a simple information-theoretic lower bound for the i.i.d.~Gaussian noise case, which will be established in Appendix \ref{sec:proof-lower-bound}.

\begin{lem}\label{lem:lower-bound} Fix any small constant $\varepsilon>0$. Suppose that $H_{ij}\overset{\text{i.i.d.}}{\sim}\mathcal{N}(0,\sigma^{2})$.
Consider three matrices
\[
\bm{M}=\lambda^{\star}\bm{u}^{\star}\bm{u}^{\star\top}+\bm{H},\qquad\widetilde{\bm{M}}=\left(\lambda^{\star}+\Delta\right)\bm{u}^{\star}\bm{u}^{\star\top}+\bm{H},\qquad\widehat{\bm{M}}=\left(\lambda^{\star}-\Delta\right)\bm{u}^{\star}\bm{u}^{\star\top}+\bm{H}
\]
with $\|\bm{u}^{\star}\|_{2}=1$.
If $\Delta\leq \sigma \sqrt{(\log_{2}1.5-\varepsilon)\log2}$,
then no algorithm can distinguish $\bm{M}$, $\widetilde{\bm{M}}$
and $\widehat{\bm{M}}$ with $p_{\mathrm{e}} \leq \varepsilon$,
where $p_{\mathrm{e}}$ is the minimax probability of error for testing
three hypotheses (namely, the ones claiming that the true eigenvalues are $\lambda^{\star}$,
$\lambda^{\star}+\Delta$, and $\lambda^{\star}-\Delta$, respectively). \end{lem}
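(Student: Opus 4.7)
The plan is to apply Fano's inequality to the three-hypothesis testing problem, a standard information-theoretic lower bound argument in which only the KL divergences between the Gaussian observation models need to be computed.

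First, I would compute the pairwise KL divergences. Under the i.i.d.~$\mathcal{N}(0,\sigma^{2})$ noise assumption, each of the three observation models is a multivariate normal on $\mathbb{R}^{n\times n}$ with common isotropic covariance $\sigma^{2}\bm{I}$, differing only in mean; the three means are $\lambda^{\star}\bm{u}^{\star}\bm{u}^{\star\top}$ and $(\lambda^{\star}\pm\Delta)\bm{u}^{\star}\bm{u}^{\star\top}$. Using $\|\bm{u}^{\star}\bm{u}^{\star\top}\|_{\mathrm{F}}=\|\bm{u}^{\star}\|_{2}^{2}=1$, the Gaussian KL formula gives $D_{\mathrm{KL}}(P_{\bm{M}}\|P_{\widetilde{\bm{M}}})=D_{\mathrm{KL}}(P_{\bm{M}}\|P_{\widehat{\bm{M}}})=\Delta^{2}/(2\sigma^{2})$ and $D_{\mathrm{KL}}(P_{\widetilde{\bm{M}}}\|P_{\widehat{\bm{M}}})=2\Delta^{2}/\sigma^{2}$.

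Next, placing a uniform prior on the three hypotheses, I would upper-bound the mutual information $I(H;X)$ between the random hypothesis $H$ and the observation $X=\bm{M}$. Using the variational identity $I(H;X)=\inf_{Q}\tfrac{1}{3}\sum_{i}D_{\mathrm{KL}}(P_{i}\|Q)$ with the reference $Q$ taken to be the centre distribution $P_{\bm{M}}$ (equidistant in KL from the two extremes), one immediately obtains $I(H;X)\leq \Delta^{2}/(3\sigma^{2})$.

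Finally, I would invoke Fano's inequality in the form $p_{\mathrm{e}}\geq(\log M-\log 2-I)/\log(M-1)$, specialized to $M=3$, to get $p_{\mathrm{e}}\geq\log_{2}(3/2)-I/\log 2$. Inserting the bound on $I$ together with the assumption $\Delta^{2}\leq \sigma^{2}(\log_{2}1.5-\varepsilon)\log 2$ yields $p_{\mathrm{e}}>\varepsilon$, contradicting any algorithm claimed to attain $p_{\mathrm{e}}\leq\varepsilon$. There is no serious obstacle in this derivation; the main thing to keep track of is that the combination $\log M-\log 2=\log(3/2)=(\log_{2}1.5)\log 2$ in the numerator and $\log(M-1)=\log 2$ in the denominator of the specialized Fano bound produces exactly the threshold $(\log_{2}1.5-\varepsilon)\log 2$ appearing in the statement.
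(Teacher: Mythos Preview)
Your proposal is correct and follows essentially the same route as the paper: both compute the Gaussian KL divergences between the three mean-shifted models and then apply Fano's inequality (the paper via \cite[Corollary~2.6]{tsybakov2009introduction}, you via the mutual-information form $p_{\mathrm{e}}\geq \log_2 1.5 - I/\log 2$). Your KL value $\Delta^{2}/(2\sigma^{2})$ together with the extra factor $1/3$ from averaging over all three hypotheses leaves slack relative to the stated threshold, but the inequality points the right way and $p_{\mathrm{e}}>\varepsilon$ follows for any $\varepsilon<\log_{2}1.5$.
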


In short, Lemma \ref{lem:lower-bound} asserts that one cannot possibly locate an eigenvalue to within a precision of $\Delta$ much better than $\sigma$, which reveals a fundamental  limit that cannot be broken by any algorithm.  In comparison, the vanilla eigen-decomposition method based on asymmetric data achieves an accuracy of $| \lambda - \lambda^{\star} | \lesssim \sigma \sqrt{\log n}$ (cf.~Corollary \ref{cor:eigenvalue-pertubation} and \eqref{eq:eigenvalue-perturbation-simple}) for the incoherent case, thus matching the information-theoretic lower bound up to some log factor. In fact, the extra $\sqrt{\log n}$ factor arises simply because we are aiming for a high-probability guarantee.

	\subsubsection{Perturbation of linear forms of eigenvectors}
	The master bound in Theorem \ref{thm_Linear_Forms_1} admits a more convenient form when controlling linear functions of the eigenvectors. The result is this:
	\begin{cor}
		\label{cor:linear-form-rank1}
		Under the same setting of Theorem \ref{thm_Linear_Forms_1},  with probability at least $1-O(n^{-10})$ we have
	\begin{equation}
		\min\big\{ \big|\bm{a}^{\top} (\bm{u} - \bm{u}^{\star})\big|, \big|\bm{a}^{\top} (\bm{u} + \bm{u}^{\star})\big| \big\}   \lesssim  \left( |\bm{a}^{\top} \bm{u}^{\star}| + \sqrt{\frac{\mu}{n}}\right) \frac{ \max\left\{\sigma \sqrt{n \log n }, B \log n \right\} }{ \big| \lambda^{\star} \big| }.
	\end{equation}
	\end{cor}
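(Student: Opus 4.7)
The plan is to reduce the corollary to the master bound by making the approximation coefficient $\alpha:=\lambda^{\star}(\bm{u}^{\star\top}\bm{u})/\lambda$ explicit and showing that $|\alpha|$ is close to $1$. Writing out Theorem \ref{thm_Linear_Forms_1}, one has
\[
\bm{a}^{\top}\bm{u} = \alpha\,\bm{a}^{\top}\bm{u}^{\star} + E,\qquad |E|\lesssim \frac{\max\{\sigma\sqrt{n\log n},\,B\log n\}}{|\lambda^{\star}|}\sqrt{\frac{\mu}{n}}.
\]
Because the conclusion is a minimum over $\bm{u}\pm\bm{u}^{\star}$, one may flip the sign of $\bm{u}$ and assume without loss of generality that $\bm{u}^{\star\top}\bm{u}\geq 0$; together with Lemma \ref{lem:perturbation-BF-rank-r} (which guarantees $\lambda$ is real and of the same sign as $\lambda^{\star}$ in the rank-1 regime), this yields $\alpha\geq 0$. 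It then suffices to bound $|\bm{a}^{\top}(\bm{u}-\bm{u}^{\star})|\leq |E|+|\alpha-1|\,|\bm{a}^{\top}\bm{u}^{\star}|$, so the whole task reduces to controlling $|\alpha-1|$.

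For $|\alpha-1|$ I would split
\[
|\alpha-1|\;\leq\;\bigl|\alpha-\bm{u}^{\star\top}\bm{u}\bigr|\;+\;\bigl|1-\bm{u}^{\star\top}\bm{u}\bigr|.
\]
The first piece equals $|\bm{u}^{\star\top}\bm{u}|\,|1-\lambda^{\star}/\lambda|$, which is exactly the quantity bounded in the proof of Corollary \ref{cor:eigenvalue-pertubation} by $\max\{\sigma\sqrt{n\log n},B\log n\}/|\lambda^{\star}|$ (here I use the already-established fact that $|\bm{u}^{\star\top}\bm{u}|\asymp 1$ and $\lambda\asymp\lambda^{\star}$). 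For the second piece, Lemma \ref{lem_l2_convergence-rank-r} implies $\|\bm{u}-\bm{u}^{\star}\|_{2}\lesssim \|\bm{H}\|/|\lambda^{\star}|$ after the sign alignment, so $|1-\bm{u}^{\star\top}\bm{u}|=\tfrac{1}{2}\|\bm{u}-\bm{u}^{\star}\|_{2}^{2}\lesssim \|\bm{H}\|^{2}/|\lambda^{\star}|^{2}$. Invoking Lemma \ref{lem:H-norm} and the smallness condition \eqref{Bsigma_cond_2} (which makes $\|\bm{H}\|/|\lambda^{\star}|\leq c_{1}$), this quadratic term is dominated by the linear one, giving the clean bound
\[
|\alpha-1|\;\lesssim\;\frac{\max\{\sigma\sqrt{n\log n},\,B\log n\}}{|\lambda^{\star}|}.
\]

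Plugging this back into $|\bm{a}^{\top}(\bm{u}-\bm{u}^{\star})|\leq |E|+|\alpha-1|\,|\bm{a}^{\top}\bm{u}^{\star}|$ and collecting terms produces exactly the stated bound $\bigl(|\bm{a}^{\top}\bm{u}^{\star}|+\sqrt{\mu/n}\bigr)\cdot\max\{\sigma\sqrt{n\log n},B\log n\}/|\lambda^{\star}|$. The argument is essentially bookkeeping given Theorem \ref{thm_Linear_Forms_1}; the only mildly delicate step is verifying that $\lambda$ has the same sign as $\lambda^{\star}$ so that flipping the sign of $\bm{u}$ to force $\bm{u}^{\star\top}\bm{u}\geq 0$ is legitimate and simultaneously selects the appropriate branch of the minimum. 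That sign control follows from Lemma \ref{lem:perturbation-BF-rank-r} together with the eigenvalue perturbation bound in Corollary \ref{cor:eigenvalue-pertubation}, both of which are available under the standing assumption \eqref{Bsigma_cond_2}, so no real obstacle remains.
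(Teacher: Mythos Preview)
Your proposal is correct and follows essentially the same route as the paper: write $\bm{a}^{\top}\bm{u}=\alpha\,\bm{a}^{\top}\bm{u}^{\star}+E$ with $\alpha=\lambda^{\star}(\bm{u}^{\star\top}\bm{u})/\lambda$, bound $|E|$ via Theorem~\ref{thm_Linear_Forms_1}, and control $|\alpha-1|$ by splitting into the eigenvalue piece $|\bm{u}^{\star\top}\bm{u}|\,|1-\lambda^{\star}/\lambda|$ and the correlation piece $|1-\bm{u}^{\star\top}\bm{u}|$, each handled through Lemmas~\ref{lem:perturbation-BF-rank-r} and~\ref{lem_l2_convergence-rank-r}. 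The paper's write-up is slightly terser (it normalizes $\lambda^{\star}=1$ and bounds both pieces directly by $\|\bm{H}\|$ without separating the linear and quadratic contributions), but the logic, the lemmas invoked, and the sign-alignment step are all the same.
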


	\begin{proof}
		Without loss of generality, assume that $\bm{u}^{\star\top}\bm{u}\geq0$ and that $\lambda^{\star}=1$. Then one has
	\begin{align*}
		| \bm{a}^{\top} (\bm{u} - \bm{u}^{\star} ) | & \leq\Big|\bm{a}^{\top}\bm{u}-\bm{a}^{\top}\bm{u}^{\star}\frac{\bm{u}^{\star\top}\bm{u}}{\lambda}\Big|+\big|\bm{a}^{\top}\bm{u}^{\star}\big|\left|\frac{\bm{u}^{\star\top}\bm{u}}{\lambda}-1\right| \\
		& \leq  \max\left\{\sigma \sqrt{n  \log n }, B \log n \right\}  \sqrt{\frac{\mu}{n}}+ |\bm{a}^{\top}\bm{u}^{\star}| \left|\frac{\bm{u}^{\star\top}\bm{u}}{\lambda}-1\right|,
	\end{align*}
	where the last inequality arises from Theorem \ref{thm_Linear_Forms_1} as well as the definition of $\mu$.  In addition, apply Lemma \ref{lem:perturbation-BF-rank-r} and Lemma \ref{lem_l2_convergence-rank-r} to obtain
	\begin{align*}
		\left|\frac{\bm{u}^{\star\top}\bm{u}}{\lambda}-1\right| & \leq\frac{\bm{u}^{\star\top}\bm{u}}{\lambda}\left|1-\lambda\right|+\left|\bm{u}^{\star\top}\bm{u}-1\right|
		\lesssim \|\bm{H}\|  \lesssim \max\left\{\sigma \sqrt{n \log n }, B \log n\right\}.
	\end{align*}
	Putting the above bounds together concludes the proof.
	\end{proof}
	The perturbation of linear forms of eigenvectors (or singular vectors) has not yet been well explored even for the symmetric case. One scenario that has been studied is linear forms of singular vectors under i.i.d.~Gaussian noise  \citep{koltchinskii2016perturbation,xia2016statistical}. Our analysis --- which is certainly different from \cite{koltchinskii2016perturbation} as our emphasis is eigen-decomposition --- does not rely on the Gaussianality assumption, and accommodates a much broader class of random noise.  Another work that has looked at linear forms of the leading singular vector is \cite{ma2017implicit} for phase retrieval and blind deconvolution, although the vector $\bm{a}$ therein is  specific to the problems (i.e.~the design vectors) and cannot be made general.

	\begin{remark}
		The perturbation theory for linear forms of eigenvectors has been substantially extended in our follow-up work; the interested reader is referred to \cite{cheng2020inference} for details. 
	\end{remark}

	\subsubsection{Entrywise eigenvector perturbation} A straightforward consequence of Corollary \ref{cor:linear-form-rank1} that is worth emphasizing is sharp entrywise control of the leading eigenvector as follows.
	\begin{cor}
		\label{cor:entrywise-rank1}
		Under the same setting of Theorem \ref{thm_Linear_Forms_1},  with probability at least $1-O(n^{-9})$ we have
	\begin{equation}
		\min\left\{ \left\|\bm{u} - \bm{u}^{\star}\right\|_\infty, \left\|\bm{u} + \bm{u}^{\star}\right\|_\infty \right\} \lesssim \frac{\max\left\{ \sigma \sqrt{n  \log n }, B \log n \right\} }{ \big| \lambda^{\star} \big|} \sqrt{\frac{\mu}{n}}.
	\end{equation}
	\end{cor}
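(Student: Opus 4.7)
The plan is to derive the entrywise bound by instantiating the linear-form bound in Corollary \ref{cor:linear-form-rank1} with the standard basis vectors $\bm{e}_1,\dots,\bm{e}_n$ and taking a union bound. This is the natural reduction since $\|\bm{v}\|_\infty = \max_{1\leq i\leq n}|\bm{e}_i^\top \bm{v}|$, and each coordinate of $\bm{u}-\bm{u}^\star$ (or $\bm{u}+\bm{u}^\star$) is a linear form of the eigenvector for a deterministic $\bm{a}$.

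First, I would fix a global sign convention for $\bm{u}$ by choosing, without loss of generality, the sign that makes $\bm{u}^{\star\top}\bm{u}\geq 0$; this is exactly the convention adopted in the proof of Corollary \ref{cor:linear-form-rank1}, and under it that corollary yields a one-sided bound on $|\bm{a}^\top(\bm{u}-\bm{u}^\star)|$ (rather than only on the min of the two signs). Then for each fixed index $i\in[n]$, applying Corollary \ref{cor:linear-form-rank1} with the deterministic unit vector $\bm{a}=\bm{e}_i$ gives, with probability at least $1-O(n^{-10})$,
\begin{equation*}
\bigl|u_i-u_i^\star\bigr| \;\lesssim\; \Bigl(|u_i^\star|+\sqrt{\tfrac{\mu}{n}}\Bigr)\,\frac{\max\{\sigma\sqrt{n\log n},\,B\log n\}}{|\lambda^\star|}.
\end{equation*}

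Next, I would invoke the incoherence assumption (Definition \ref{def_incoherence_parameter}) in the rank-$1$ case to conclude $|u_i^\star|\leq\|\bm{u}^\star\|_\infty\leq\sqrt{\mu/n}$, so that the factor $(|u_i^\star|+\sqrt{\mu/n})$ collapses to $O(\sqrt{\mu/n})$. A union bound over $i\in[n]$ then upgrades the per-coordinate bound to a uniform bound on $\|\bm{u}-\bm{u}^\star\|_\infty$ at the price of degrading the probability from $1-O(n^{-10})$ to $1-O(n^{-9})$, matching the claimed bound (with the sign chosen such that $\bm{u}^{\star\top}\bm{u}\geq 0$, so that the opposite sign version $\|\bm{u}+\bm{u}^\star\|_\infty$ is larger and the minimum equals $\|\bm{u}-\bm{u}^\star\|_\infty$).

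There is essentially no hard step here: the proof is a direct corollary, and the only thing to be careful about is that the sign choice in Corollary \ref{cor:linear-form-rank1} is a single global sign (not an $\bm{a}$-dependent sign), so that the same sign works simultaneously for all $n$ coordinates and the union bound legitimately controls $\|\bm{u}-\bm{u}^\star\|_\infty$ rather than merely $\max_i\min\{|u_i-u_i^\star|,|u_i+u_i^\star|\}$. Once this observation is made, the remaining work is just substituting $\bm{a}=\bm{e}_i$, using incoherence to absorb $|u_i^\star|$ into $\sqrt{\mu/n}$, and unioning over $n$ choices of $i$.
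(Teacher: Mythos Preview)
Your proposal is correct and matches the paper's own proof essentially verbatim: the paper simply notes $\|\bm{u}-\bm{u}^{\star}\|_{\infty}= \max_{i} | \bm{e}_i^{\top}(\bm{u}-\bm{u}^{\star}) |$, uses incoherence to get $|\bm{e}_i^{\top}\bm{u}^\star|\leq \sqrt{\mu/n}$, and applies Corollary~\ref{cor:linear-form-rank1} together with the union bound. Your explicit remark that the sign choice in Corollary~\ref{cor:linear-form-rank1} is a single global one (so the same sign works for all $n$ coordinates simultaneously) is a nice point of care that the paper leaves implicit.
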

	
	\begin{proof}
		Recognizing that $\|\bm{u}-\bm{u}^{\star}\|_{\infty}= \max_{i} | \bm{e}_i^{\top}\bm{u} - \bm{e}_i^{\top} \bm{u}^{\star} |$ and recalling our assumption $|\bm{e}_i^{\top}\bm{u}|\leq \sqrt{{\mu}/{n}}$, we can invoke Corollary \ref{cor:linear-form-rank1} and the union bound to establish this entrywise bound.
	\end{proof}

We note that: while the  $\ell_2$ perturbation (or $\sin\bm{\Theta}$ distance) of eigenvectors or singular vectors has been extensively studied \cite{davis1970rotation,wedin1972perturbation,vu2011singular,wang2015singular,o2013random,cai2018rate}, the entrywise eigenvector behavior was much less explored.  The prior literature contains only a few entrywise eigenvector perturbation analysis results for  settings very different from ours, e.g.~the i.i.d.~random matrix case \cite{vu2015random,o2016eigenvectors}, the symmetric low-rank case \cite{fan2018eigenvector,abbe2017entrywise,pmlr-v83-eldridge18a}, and the case with transition matrices for reversible Markov chains \citep{chen2017spectral}. Our results add another instance to this body of works in providing entrywise eigenvector perburation bounds.  	

	\subsection{Applications}\label{sec:applications}

	We apply our main results to two concrete matrix estimation problems and examine the effectiveness of these bounds. As before, $\bm{M}^{\star}$ is a rank-1 matrix with incoherence parameter $\mu$ and leading eigenvalue $\lambda^{\star}$.

	\bigskip
	\noindent {\bf Low-rank matrix estimation from Gaussian noise.} Suppose that $\bm{H}$ is composed of i.i.d.~Gaussian random variables $\mathcal{N}(0,\sigma^2)$.\footnote{In this case, one can take $B \asymp \sigma  \sqrt{ \log n }$, which clearly satisfies $B \log n \ll  \sqrt{n\sigma^2\log n}$. }
	%
	If $\sigma \lesssim \frac{1}{\sqrt{n\log n}}$, applying Corollaries \ref{cor:eigenvalue-pertubation}-\ref{cor:entrywise-rank1} reveals that with high probability,
	\begin{subequations} \label{eq:error-rank-1-Gauss-symm}
	\begin{align}
		|\lambda - \lambda^{\star}| &\lesssim \sigma \sqrt{  \mu \log n } \\
		\min\{ \|\bm{u} - \bm{u}^{\star}\|_{\infty}, \|\bm{u} + \bm{u}^{\star}\|_{\infty} \} & \lesssim  \frac{ \sigma \sqrt{  \mu \log n } }{ \big| \lambda^{\star} \big|} \\
		\min\{ |\bm{a}^{\top}(\bm{u} - \bm{u}^{\star})|, |\bm{a}^{\top}(\bm{u} + \bm{u}^{\star})| \} & \lesssim \left( |\bm{a}^{\top} \bm{u}^{\star} | + \sqrt{\frac{\mu}{n}} \right)  \frac{ \sigma \sqrt{ n \log n} }{ \big| \lambda^{\star} \big|}
	\end{align}
	\end{subequations}
	for any fixed unit vector $\bm{a}\in \mathbb{R}^n$. We have conducted additional numerical experiments in Fig.~\ref{fig:gaussian_varyn}, which confirm our findings.  It is also worth noting that empirically, eigen-decomposition and SVD applied to $\bm{M}$ achieve nearly identical $\ell_2$ and $\ell_{\infty}$ errors when estimating the leading eigenvector of $\bm{M}^{\star}$.  In addition, we also include the numerical estimation error of the corrected eigenvalue $\lambda_{\mathsf{sym},\mathsf{c}}$ (cf.~\eqref{eq:lambda-adjust}) of the symmetrized matrix $(\bm{M}+\bm{M}^{\top})/2$. As can be seen from Fig.~\ref{fig:gaussian_varyn}, vanilla eigen-decomposition  without symmetrization performs nearly  identically to the one with symmetrization and proper correction. 	
	

	\begin{figure}[htbp!]
		\centering
		\begin{tabular}{ccc}
			  \includegraphics[width=0.32\linewidth]{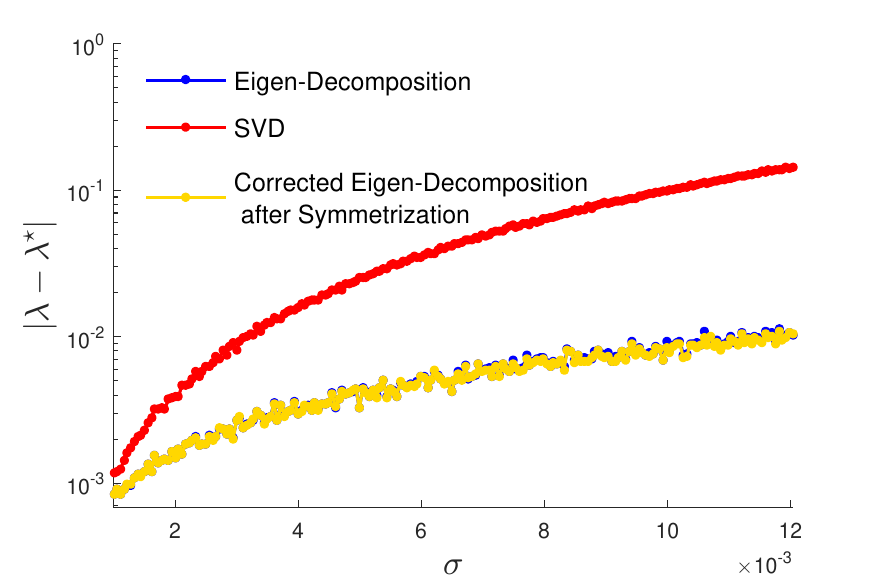}
			& \includegraphics[width=0.32\linewidth]{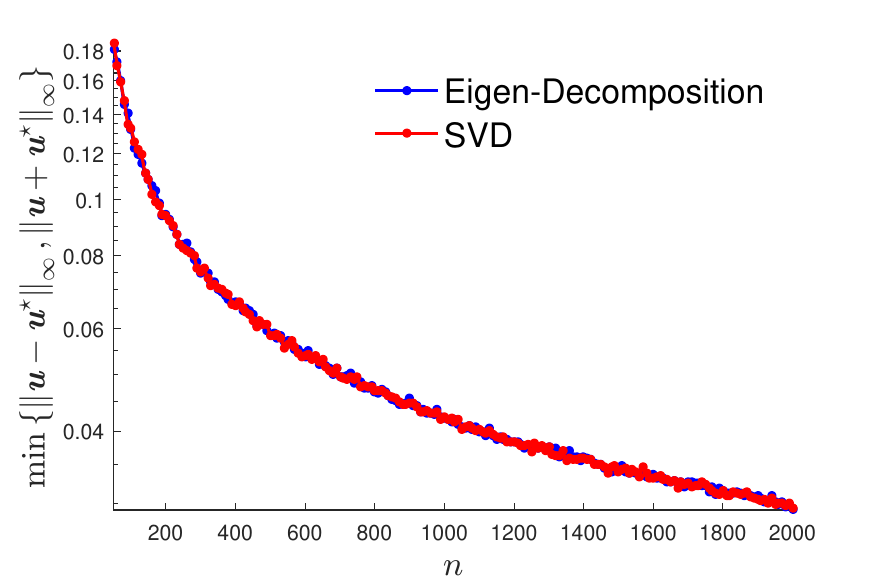}
			& \includegraphics[width=0.32\linewidth]{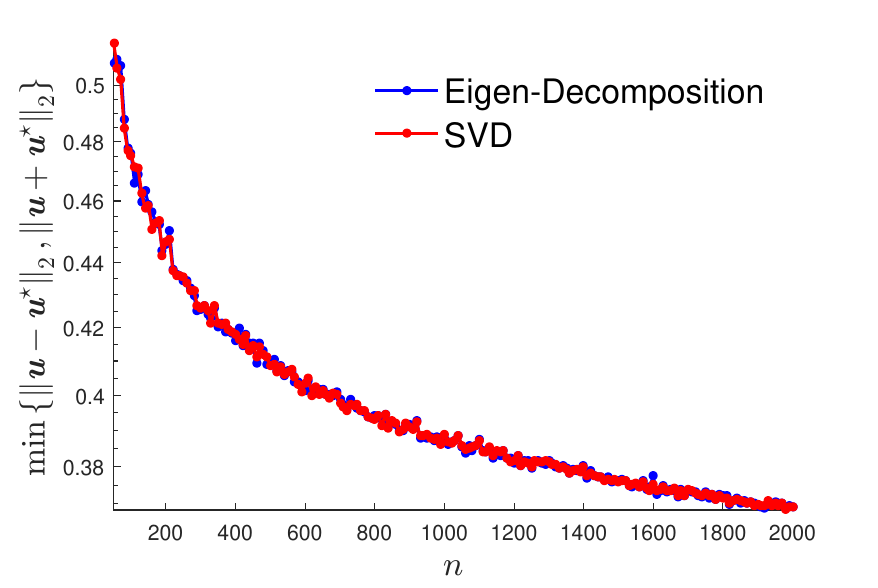}
		\tabularnewline
			(a) eigenvalue perturbation & (b) $\ell_{\infty}$ eigenvector perturbation & (c) $\ell_2$ eigenvector perturbation \tabularnewline
		\end{tabular}

		\caption{Numerical simulation for rank-1 matrix estimation under i.i.d.~Gaussian noise $\mathcal{N}(0,\sigma^2)$, where the rank-1 truth $\bm{M}^{\star}$ is generated randomly with leading eigenvalue 1. (a): $|\lambda- \lambda^{\star}|$ vs.~$\sigma$ when $n=1000$; (b) and (c):   $\ell_{\infty}$ and $\ell_2$ eigenvector estimation errors vs.~$n$ with $\sigma = 1 / \sqrt{n \log n}$, respectively. 		
		The blue (resp.~red) lines represent the average errors over 100 independent trials using the vanilla eigen-decomposition (resp.~SVD) approach applied to $\bm{M}$. The orange line in (a) represents the average errors over 100 independent trials using the corrected leading eigenvalue $\lambda_{\mathsf{sym},\mathsf{c}}$ of the symmetrized  matrix $(\bm{M} + \bm{M}^\top)/2$ (cf.~\eqref{eq:lambda-adjust}).
		} \label{fig:gaussian_varyn}
	\end{figure}

	\bigskip
	\noindent {\bf Low-rank matrix completion.} Suppose that $\bm{M}$ is generated using random partial entries of $\bm{M}^{\star}$ as follows
	\begin{equation}
		M_{ij}= \begin{cases} \frac{1}{p} M_{ij}^{\star},  \qquad & \text{with probability }p, \\ 0, & \text{else}, \end{cases}
	\end{equation}
	where $p$ denotes the fraction of the entries of $\bm{M}^{\star}$ being revealed.  It is straightforward to verify that $\bm{H}=\bm{M}-\bm{M}^{\star}$ is zero-mean and obeys $|H_{ij}| \leq \frac{\mu}{np}:=B$ and $\mathsf{Var}(H_{ij})\leq \frac{\mu^2}{pn^2}$. Consequently, if $p \gtrsim \frac{\mu^2 \log n}{n}$, then invoking Corollaries \ref{cor:eigenvalue-pertubation}-\ref{cor:entrywise-rank1} yields
	\begin{subequations}
	\label{eq:error-rank-1-MC-symm}
	\begin{align}
		\frac{ |\lambda - \lambda^{\star}| }{ \big| \lambda^{\star} \big|} & \lesssim  \frac{1}{\sqrt{n}} \sqrt{ \frac{\mu^3 \log n}{p n} }  \\
		\min\{\|\bm{u} - \bm{u}^{\star}\|_{\infty}, \|\bm{u} + \bm{u}^{\star}\|_{\infty} \} & \lesssim \frac{1}{\sqrt{n}} \sqrt{ \frac{\mu^3 \log n}{p n} } \\
		\min\{|\bm{a}^{\top}(\bm{u}-\bm{u}^{\star})|, |\bm{a}^{\top}(\bm{u}+\bm{u}^{\star})|\} & \lesssim\left(|\bm{a}^{\top}\bm{u}^{\star}|+\sqrt{\frac{\mu}{n}}\right)\sqrt{ \frac{\mu^{2}\log n}{p n}  }
	\end{align}
	\end{subequations}
	with high probability, where $\bm{a}\in \mathbb{R}^n$ is any fixed unit vector. Additional numerical simulations have been carried out in Fig.~\ref{fig:mc_varyn} to verify these findings. Empirically, eigen-decomposition outperforms SVD in estimating both the leading eigenvalue and  eigenvector of $\bm{M}^{\star}$.

	\begin{figure}[htbp!]
		\centering
			
		\begin{tabular}{ccc}

		\includegraphics[width=0.32\linewidth]{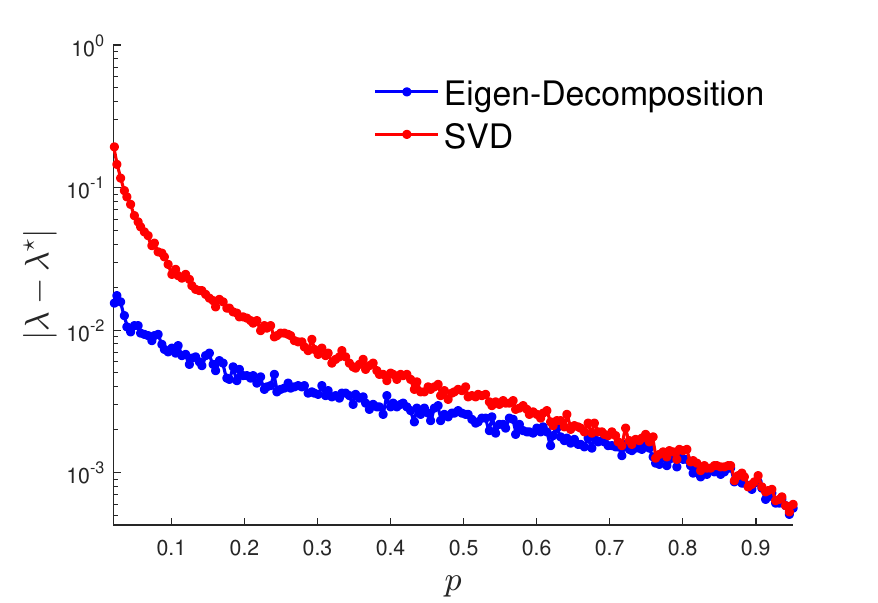}
			& \includegraphics[width=0.32\linewidth]{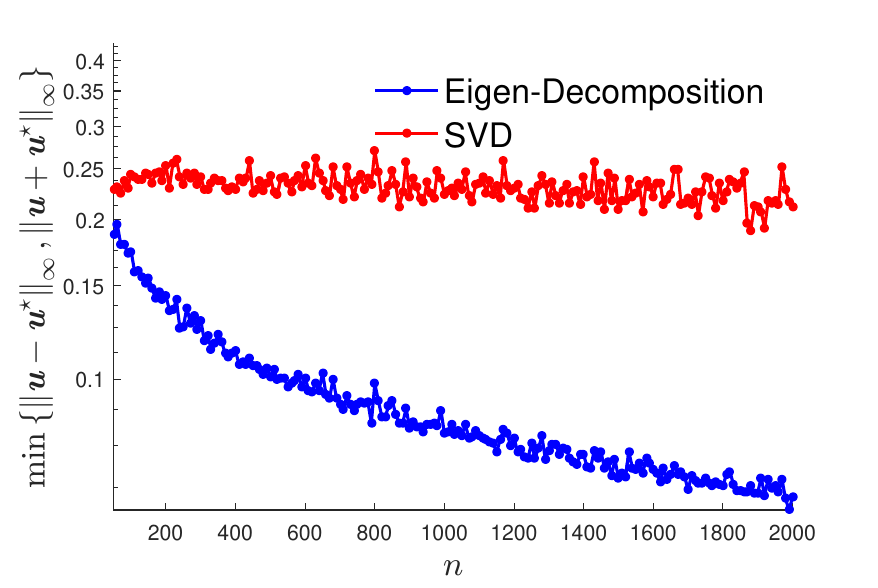}
			& \includegraphics[width=0.32\linewidth]{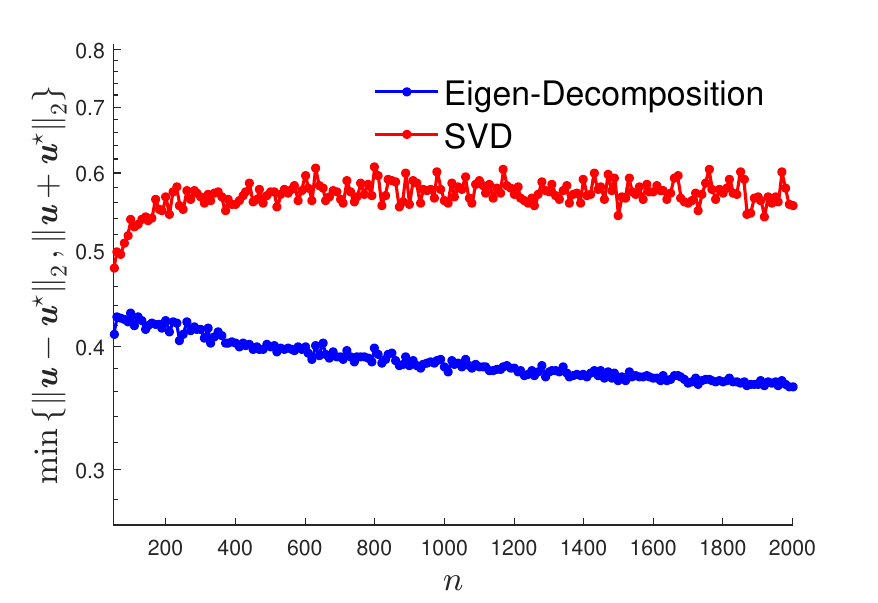}

		\tabularnewline
			(a) eigenvalue perturbation & (b) $\ell_{\infty}$ eigenvector perturbation & (c) $\ell_2$ eigenvector perturbation \tabularnewline
		\end{tabular}
		
		\caption{Numerical simulation for rank-1 matrix completion,  where the rank-1 truth $\bm{M}^{\star}$ is randomly generated with leading eigenvalue 1 and  sampling rate is $p=3\log n/n$.  (a) $|\lambda- \lambda^{\star}|$ vs.~$p$ when $n=1000$; (b) and (c):  $\ell_{\infty}$ and $\ell_2$ eigenvector estimation errors vs.~$n$, respectively.
		The blue (resp.~red) lines represent the average errors over 100 independent trials using the eigen-decomposition (resp.~SVD) approach.
		\label{fig:mc_varyn}}
	\end{figure}

	\bigskip
	\noindent Finally, we remark that all  the above applications assume the availability of an asymmetric data matrix $\bm{M}$. One might naturally wonder whether there is anything useful we can say if  only a symmetric matrix $\bm{M}$ is available. While this is  in general difficult,  our theory does have direct implications for both matrix completion and the case with i.i.d.~Gaussian noise in the presence of symmetric data matrices; that is, it is possible to first asymmetrize the data matrix followed by eigen-decomposition.  The interested reader is referred to Appendix \ref{sec:asymmetrization-rank1} for details.

	\subsection{Why asymmetry helps?}
	\label{sec:intuition}
	
	We take a moment to develop some intuition underlying  Theorem \ref{thm_Linear_Forms_1}, focusing on the case with $\lambda^{\star}=1$ for simplicity. The key ingredient is the Neumann trick stated in Theorem \ref{thm_Neumann}. Specifically, in the rank-1 case we can expand
	$$
	\bm{u} = \frac{1}{\lambda}\left(\bm{u}^{\star \top} \bm{u}\right)\sum\nolimits_{s=0}^{\infty} \frac{1}{\lambda^s} \bm{H}^s \bm{u}^\star .
	$$
 	A little algebra yields	
	\begin{align} \label{eq:au-UB}
	\left|\bm{a}^\top \left(\bm{u}-\frac{\bm{u}^{\star\top} \bm{u}}{\lambda} \bm{u}^\star\right)\right| = \left|\frac{\bm{u}^{\star\top} \bm{u}}{\lambda} \sum_{s=1}^{\infty} \frac{\bm{a}^\top \bm{H}^s \bm{u}^\star}{\lambda^s} \right| 
\lesssim \sum_{s=1}^{\infty} \left| \frac{\bm{a}^\top \bm{H}^s \bm{u}^\star}{\lambda^s} \right|,
	\end{align}
	where the last inequality holds since (i) $|\bm{u}^{\star\top} \bm{u}|\leq 1$, and (ii)  $\lambda$ is real-valued and obeys $\lambda\approx 1$ if $\|\bm{H}\|\ll 1$ (in view of Lemma \ref{lem:perturbation-BF-rank-r}). As a result, the perturbation can be well-controlled as long as  $| \bm{a}^\top \bm{H}^s \bm{u}^\star |$ is small for every $s\geq 1$.

As it turns out, $\bm{a}^{\top}\bm{H}^{s}\bm{u}^{\star}$ might be much
better controlled when $\bm{H}$ is random and asymmetric, in comparison to
 the case where $\bm{H}$ is random and symmetric. To illustrate
this point, it is perhaps the easiest to inspect the second-order term.
\begin{itemize}
\item \textbf{Asymmetric case:} when $\bm{H}$ is composed of independent
zero-mean entries each with variance $\sigma_n^{2}$, one has
\[
\mathbb{E}\left[\bm{a}^{\top}\bm{H}^{2}\bm{u}^{\star}\right]=\bm{a}^{\top}\mathbb{E}\left[\bm{H}^{2}\right]\bm{u}^{\star}=\bm{a}^{\top}(\sigma^{2}\bm{I})\bm{u}^{\star}=\sigma^{2}\bm{a}^{\top}\bm{u}^{\star}.
\]
\item \textbf{Symmetric case:} when $\bm{H}$ is symmetric and its upper
trangular part consists of independent zero-mean entries with variance
$\sigma_n^{2}$, it holds that
\[
\mathbb{E}\left[\bm{a}^{\top}\bm{H}^{2}\bm{u}^{\star}\right]=\bm{a}^{\top}\mathbb{E}\left[\bm{H}^{2}\right]\bm{u}^{\star}=\bm{a}^{\top}(n\sigma^{2}\bm{I})\bm{u}^{\star}=n\sigma^{2}\bm{a}^{\top}\bm{u}^{\star}.
\]
\end{itemize}
In words, the term $\bm{a}^{\top}\bm{H}^{2}\bm{u}^{\star}$ in
the symmetric case might have a significantly larger bias compared
to the asymmetric case. This bias effect is substantial when $\bm{a}^{\top}\bm{u}^{\star}$
is large (e.g.~when $\bm{a}=\bm{u}^{\star}$), which plays a crucial
role in determining the size of eigenvalue perturbation.
	
The vanilla SVD-based approach can be interpreted in a similar manner. Specifically, we recognize that the leading singular value (resp.~left singular vector) can be computed via the leading eigenvalue (resp.~eigenvector) of the symmetric matrix $\bm{M}\bm{M}^{\top}$. Given that  $\bm{M}\bm{M}^{\top} - \bm{M}^{\star}\bm{M}^{\star\top}$ is also symmetric, the aforementioned bias issue arises as well. This explains why vanilla eigen-decomposition might have an advantage over vanilla SVD when dealing with asymmetric matrices.

Finally, we remark that the aforementioned bias issue becomes less severe as $\|\bm{H}\|$ decreases. For example, when $\|\bm{H}\|$ is exceedingly small,  the only dominant term on the right-hand side of \eqref{eq:au-UB} is $\bm{a}^{\top}\bm{H} \bm{u}^{\star}$, with all higher-order terms being vanishingly small. In this case,   $\mathbb{E}[\bm{a}^{\top}\bm{H} \bm{u}^{\star}]=0$ for both symmetric and asymmetric zero-mean noise matrices.  As a consequence, the advantage of eigen-decomposition becomes negligible when dealing with nearly-zero noise.
 This observation is also confirmed in the numerical experiments reported in Fig.~\ref{fig:gaussian_varyn}(a) and Fig.~\ref{fig:mc_varyn}(a), where  the two approaches achieve similar eigenvalue estimation accuracy when $\sigma \rightarrow 0$ (resp.~$p\rightarrow 1$) in matrix estimation under Gaussian noise (resp.~matrix completion).  In fact, the case with very small $\|\bm{H}\|$ has been studied in the literature \cite{o2013random,vu2011singular,pmlr-v83-eldridge18a}. For example, it was shown in  \cite{o2013random} that when $\|\bm{H}\|\lesssim \frac{1}{\sqrt{n}} \| \bm{M}^{\star} \|$,  the singular value perturbation is also $\sqrt{n}$ times smaller than the bound predicted by Weyl's theorem; similar improvement can be observed w.r.t.~eigenvalue perturbation when $\bm{H}$ is symmetric (cf.~\cite[Theorem 6]{pmlr-v83-eldridge18a}). By contrast,
our eigenvalue perturbation results achieve this gain even when  $\|\bm{H}\|$ is nearly as large as  $\| \bm{M}^{\star} \|$ (up to some logarithmic factor).

	\subsection{Proof outline of Theorem \ref{thm_Linear_Forms_1}}

	This subsection outlines the main steps for establishing  Theorem \ref{thm_Linear_Forms_1}. To simplify presentation, we shall  assume without loss of generality that
	\begin{equation}
		\lambda^{\star} = 1 	.
	\end{equation}
	Throughout this paper, all the proofs are provided for the case when Conditions 1-3, 4(a) in Assumption \ref{assumption:H} are valid. Otherwise, if Condition 4(b) is valid, then we can invoke the union bound to show that
	\begin{equation}
		\bm{M} = \bm{M}^{\star} + \tilde{\bm{H}} 	
	\end{equation}
	with probability exceeding $1-O(n^{-10})$, where $\tilde{H}_{ij} \triangleq H_{ij} \ind _{\{ |H_{ij}| \leq B \}}$ is the truncated noise and has magnitude bounded by $B$.  Since $H_{ij}$ has symmetric distribution, it is seen that $\mathbb{E}[\tilde{H}_{ij}]=0$ and $\mathsf{Var}(\tilde{H}_{ij})\leq \sigma^2$, which coincides with the case obeying Conditions 1-3, 4(a) in Assumption \ref{assumption:H}.

	As already mentioned in Section \ref{sec:intuition}, everything boils down to controlling $| \bm{a}^\top \bm{H}^s \bm{u}^\star |$ for  $s\geq 1$. This is accomplished via the following lemma.
	
	\begin{lem}[\textbf{Bounding higher-order terms}] \label{lem_high_order_terms}
		Consider any fixed unit vector $\bm{a} \in \mathbb{R}^{n}$ and any  positive integers $s, k$ satisfying $Bsk \leq 2$ and $n\sigma^2 sk \leq 2$. Under the assumptions of Theorem \ref{thm_Linear_Forms_1},
	\begin{equation}
		\left|\mathbb{E} \left[\left(\bm{a}^\top \bm{H}^s \bm{u}^\star\right)^k\right]\right| \leq  \frac{sk}{2}  \max\left\{\left(Bsk\right)^{sk} ,\left(2n \sigma^2 sk\right)^{sk/2}\right\} \left(\sqrt{\frac{\mu}{n}}\right)^k .
	\end{equation}
	\end{lem}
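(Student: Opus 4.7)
I would prove the lemma by a combinatorial moment-method expansion that crucially exploits the asymmetry of $\bm{H}$. The starting point is to write
\[
\bm{a}^{\top}\bm{H}^{s}\bm{u}^{\star}=\sum_{i_{0},i_{1},\ldots,i_{s}\in[n]}a_{i_{0}}\,u^{\star}_{i_{s}}\prod_{\ell=1}^{s}H_{i_{\ell-1}i_{\ell}}
\]
as a sum over directed length-$s$ walks in $[n]$. Raising to the $k$-th power and taking expectations turns this into a sum over $k$-tuples $\mathcal{W}=(\mathbf{i}^{(1)},\ldots,\mathbf{i}^{(k)})$ of such walks, with boundary weights $\prod_{m}a_{i^{(m)}_{0}}u^{\star}_{i^{(m)}_{s}}$ and inner expectation $\mathbb{E}\prod_{m,\ell}H_{i^{(m)}_{\ell-1}i^{(m)}_{\ell}}$. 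Because the $H_{ij}$ are independent and mean zero and, critically, $H_{ij}$ and $H_{ji}$ are independent (the asymmetry), this inner expectation vanishes unless every distinct directed edge appearing in $\mathcal{W}$ has multiplicity at least $2$ in the full multiset of $sk$ edges. I shall call such configurations \emph{valid}.

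\textbf{Per-configuration bound.} For a valid $\mathcal{W}$ with $d$ distinct directed edges whose multiplicities $m_{1},\ldots,m_{d}\geq 2$ sum to $sk$, the elementary estimate $\mathbb{E}|H_{ij}|^{m}\leq \sigma^{2}B^{m-2}$ for $m\geq 2$ (which follows from $\mathbb{E}H_{ij}^{2}\leq\sigma^{2}$ and $|H_{ij}|\leq B$) gives
\[
\Bigl|\mathbb{E}\!\prod_{m,\ell}H_{i^{(m)}_{\ell-1}i^{(m)}_{\ell}}\Bigr|\;\leq\;\prod_{j=1}^{d}\sigma^{2}B^{m_{j}-2}\;=\;\sigma^{2d}B^{sk-2d}.
\]
The incoherence hypothesis yields $\prod_{m=1}^{k}|u^{\star}_{i^{(m)}_{s}}|\leq (\mu/n)^{k/2}$, which may be pulled out of the sum, while $|a_{i^{(m)}_{0}}|\leq 1$ provides the trivial bound on the starting weights. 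The remaining task is therefore to count valid configurations with $d$ distinct edges.

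\textbf{Combinatorial counting.} The key geometric observation is that the support simple graph of a valid $\mathcal{W}$ (the union of the walks with multiplicities removed) has at most $d+k$ vertices: it has $d$ edges and at most $k$ connected components, since each walk lies in a single component, and any graph with $d$ edges, $c$ components and $v$ vertices obeys $v\leq d+c$. Given this, I would count configurations by an exploration procedure that lays the $sk$ edges down one walk at a time and one step at a time, classifying each step as either \emph{fresh} (a previously-unseen distinct edge, contributing at most $n$ choices for its new endpoint) or \emph{repeat} (contributing at most $d$ choices, since the edge must start at the current vertex and already exist). With $d$ fresh steps, $sk-d$ repeats, and $k$ walk starts contributing at most $n$ choices, this yields a count of the form $\mathcal{N}_{d}\leq \binom{sk}{d}n^{d+k}d^{sk-d}$, up to a multinomial factor accounting for the interleaving pattern of fresh and repeat steps.

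\textbf{Assembly and main obstacle.} Putting everything together,
\[
\bigl|\mathbb{E}[(\bm{a}^{\top}\bm{H}^{s}\bm{u}^{\star})^{k}]\bigr|\;\leq\;\Bigl(\tfrac{\mu}{n}\Bigr)^{k/2}\sum_{d=1}^{sk/2}\mathcal{N}_{d}\,\sigma^{2d}B^{sk-2d}.
\]
The summand is essentially geometric in $d$ with base $\sigma^{2}n/B^{2}$ (after absorbing the $n^{d+k}$ vertex budget), so its maximum over $d\in\{1,\ldots,sk/2\}$ is attained at one of the two endpoints: the smallest $d$ produces the branch $(Bsk)^{sk}$, while $d=sk/2$ produces $(2n\sigma^{2}sk)^{sk/2}$. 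Bounding the sum by $sk/2$ times its maximum term delivers the prefactor in the claim, and the hypotheses $Bsk\leq 2$, $n\sigma^{2}sk\leq 2$ are exactly what keeps the combinatorial prefactors from corrupting the geometric picture. The main obstacle will be the precise bookkeeping in the exploration step: one must show that the interleaving of fresh and repeat steps, constrained by the walk-concatenation rules (each repeat must leave from the current vertex, each fresh must land either on an already-visited vertex or on one of the $\leq d+k$ vertices), does not introduce overhead beyond the $(sk)^{sk}$ factor that is absorbed into each branch of the $\max$.
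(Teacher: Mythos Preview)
Your overall architecture---expand $(\bm a^\top\bm H^s\bm u^\star)^k$ over $k$ walks, kill all configurations in which some directed edge appears only once, bound the surviving inner expectation by $\sigma^{2d}B^{sk-2d}$ when there are $d$ distinct edges, and then count---matches the paper exactly. The moment estimate and the incoherence bound $|u^\star_{i_s^{(m)}}|\le\sqrt{\mu/n}$ are both fine.

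The gap is in how you handle the starting weights $a_{i_0^{(m)}}$. You propose to use the trivial bound $|a_{i_0^{(m)}}|\le 1$ and then, in the exploration count, pay a factor of $n$ for each of the $k$ walk starts. This produces $\mathcal N_d\lesssim \binom{sk}{d}n^{d+k}d^{\,sk-d}$, and after multiplying by $(\mu/n)^{k/2}$ you are left with
\[
\Bigl(\frac{\mu}{n}\Bigr)^{k/2}n^{k}\sum_{d}\binom{sk}{d}(n\sigma^2)^{d}B^{sk-2d}d^{\,sk-d}
\;=\;(\mu n)^{k/2}\sum_{d}(\cdots),
\]
which is off from the claim by a factor of $n^{k}$. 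Even if you replace the trivial bound by $\sum_i|a_i|\le\sqrt{n}$ at each start, you still lose $n^{k/2}$. Neither version recovers the stated $(\sqrt{\mu/n})^k$.

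What the paper does---and what your sketch is missing---is to \emph{retain} the factors $\prod_b|a_{i_0^{(b)}}|$ inside the summation and exploit $\|\bm a\|_2=1$. Once you fix a ``type'' (your $d$-edge identification pattern), the constraints force the $(s+1)k$ indices into a bounded number of equality classes; summing over the value of a class that contains two or more of the starting indices $i_0^{(b)}$ contributes $\sum_x|a_x|^{\ge 2}\le\|\bm a\|_2^2=1$, not $n$. The nontrivial combinatorial step is then to show that the number of equality classes that do \emph{not} swallow at least two starting indices is at most $d$; this is where the factor $n^{d}$ (and not $n^{d+k}$) comes from. Without this mechanism your assembly cannot reach the target bound.
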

	\begin{proof} The proof of Lemma \ref{lem_high_order_terms} is combinatorial in nature, which we defer to Appendix \ref{app_lem2}. \end{proof}
	
	\begin{remark}
		A similar result  in \cite[Lemma 2.3]{tao2013outliers} has studied the  bilinear forms of the high order terms of an i.i.d.~random matrix, with a few distinctions. First of all, \cite{tao2013outliers} assumes that each entry of the noise matrix is i.i.d.~and has finite fourth moment (if the noise variance is rescaled to be 1);  these assumptions break in examples like matrix completion.  Moreover, \cite{tao2013outliers}  focuses on the case with $k=2$,  and does not lead to high-probability bounds (which are crucial for, e.g.~entrywise error control).
	\end{remark}

	Using Markov's inequality and the union bound, we can translate Lemma \ref{lem_high_order_terms} into a high probability bound as follows.
	\begin{cor}
		\label{cor_high_order_terms}
		Under the assumptions of Lemma \ref{lem_high_order_terms}, there exists some universal constant $c_2>0$ such that
		$$
			\left|\bm{a}^\top \bm{H}^s \bm{u}^\star\right| \leq \left(c_2 \max\left\{B\log n, \sqrt{n \sigma^2 \log n}\right\}\right)^s \sqrt\frac{\mu}{n}, \qquad \forall s \leq 20 \log n
		$$
		with probability $1-O(n^{-10})$.
	\end{cor}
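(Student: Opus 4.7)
}
The plan is to convert the $k$-th moment bound of Lemma~\ref{lem_high_order_terms} into a tail bound via Markov's inequality with a carefully chosen moment order $k=k(s)$, and then to take a union bound over $s\in\{1,\ldots,\lfloor 20\log n\rfloor\}$.

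For each such $s$ I would pick $k(s):=2\lceil 10\log n/s\rceil$, which is an even integer $\geq 2$ and satisfies $sk\in[20\log n,40\log n]$. Under \eqref{Bsigma_cond_2} with $c_1\leq 1/20$ we have $B\log n\leq c_1$ and $n\sigma^{2}\log n\leq c_1^{2}$, so $Bsk\leq 40c_1\leq 2$ and $n\sigma^{2}sk\leq 40c_1^{2}\leq 2$; thus the hypotheses of Lemma~\ref{lem_high_order_terms} are met for every admissible $(s,k)$ pair in the range of interest.

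Since $k$ is even, $(\bm{a}^{\top}\bm{H}^{s}\bm{u}^{\star})^{k}=|\bm{a}^{\top}\bm{H}^{s}\bm{u}^{\star}|^{k}\geq 0$, so Markov's inequality yields
\begin{equation*}
\Pr\!\bigl(|\bm{a}^{\top}\bm{H}^{s}\bm{u}^{\star}|>t\bigr)\;\leq\;t^{-k}\,\mathbb{E}\!\bigl[(\bm{a}^{\top}\bm{H}^{s}\bm{u}^{\star})^{k}\bigr].
\end{equation*}
Setting $t=t(s):=(c_{2}\beta)^{s}\sqrt{\mu/n}$ with $\beta:=\max\{B\log n,\sqrt{n\sigma^{2}\log n}\}$, and using the trivial rewritings $(Bsk)^{sk}=(sk/\log n)^{sk}(B\log n)^{sk}$ together with $(2n\sigma^{2}sk)^{sk/2}=(2sk/\log n)^{sk/2}(n\sigma^{2}\log n)^{sk/2}$, Lemma~\ref{lem_high_order_terms} and the bound $\beta\geq \max\{B\log n,\sqrt{n\sigma^{2}\log n}\}$ reduce the Markov ratio to
\begin{equation*}
\frac{sk}{2}\cdot\max\!\left\{\left(\frac{sk}{c_{2}\log n}\right)^{\!sk},\ \left(\frac{2sk}{c_{2}^{\,2}\log n}\right)^{\!sk/2}\right\}.
\end{equation*}
Because $sk/\log n\in[20,40]$, each inner base is at most a constant divided by $c_{2}$ (resp.\ $c_{2}^{2}$); taking $c_{2}$ to be a sufficiently large universal constant thus forces this quantity to be at most $n^{-11}$ uniformly in $s$. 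A union bound over the $O(\log n)$ admissible values of $s$ then upgrades this to probability $1-O(n^{-10})$, as claimed.

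The argument is not genuinely delicate: the only care required is in coordinating the choice of $k(s)$ so that (i) $k(s)$ is an even integer at least $2$ across the entire range $s\leq 20\log n$, and (ii) $sk$ stays in a constant multiple of $\log n$, which simultaneously satisfies the hypotheses $Bsk\leq 2$, $n\sigma^{2}sk\leq 2$ and keeps the factors $(sk/\log n)^{sk}$ and $(2sk/\log n)^{sk/2}$ bounded by $O(1)^{sk}$. All the combinatorial content has already been absorbed into Lemma~\ref{lem_high_order_terms}, so no substantive obstacle remains at the level of the corollary itself; the residual issue of shrinking $c_{1}$ in \eqref{Bsigma_cond_2} to, say, $1/20$ is compatible with the ``sufficiently small constant'' clause of Theorem~\ref{thm_Linear_Forms_1}.
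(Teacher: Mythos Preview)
Your argument is correct and follows essentially the same approach as the paper: apply Markov's inequality with an even moment order $k$ chosen so that $sk$ is of order $\log n$, plug in Lemma~\ref{lem_high_order_terms}, and take a union bound over $s$. One small arithmetic slip: with your explicit choice $k(s)=2\lceil 10\log n/s\rceil$ you only get $sk\le 20\log n+2s\le 60\log n$, not $40\log n$; this is harmless (just absorb the larger constant into $c_2$), but you may want to correct the stated range.
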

	\begin{proof}
		 See Appendix \ref{appendix:proof-cor_high_order_terms}.
	\end{proof}
	In addition,  in view of Lemma \ref{lem:H-norm} and the condition \eqref{Bsigma_cond_2}, one has
	\begin{equation}
		\|\bm{H}\| \lesssim \max\left\{B \log n, \sqrt{n \sigma^2 \log n }\right\} < 1/10
	\end{equation}
	with probability $1-O(n^{-10})$, which together with Lemma \ref{lem:perturbation-BF-rank-r} implies
	$\lambda \geq 3\|\bm{H}\|$.
	This further leads to
	\begin{align*}
		\sum_{s: s\geq 20\log n}\left(\frac{\|\bm{H}\|}{\lambda}\right)^{s}  &\leq  \frac{\|\bm{H}\|}{\lambda} \sum_{s: s\geq 20\log n-1}\left(\frac{\|\bm{H}\|}{\lambda}\right)^{s}
		\leq \frac{\|\bm{H}\|}{\lambda} \sum_{s: s\geq 20\log n-1}\frac{1}{3^{s}} \\
		& \lesssim \max\left\{B \log n, \sqrt{n \sigma^2 \log n }\right\} \cdot n^{-10}.
	\end{align*}

	Putting the above bounds together and using the fact that $\lambda$ is real-valued and $\lambda \geq 1/2$ (cf.~Lemma \ref{lem:perturbation-BF-rank-r}), we have
 	\begin{align*}
 	\left|\bm{a}^\top \left(\bm{u}-\frac{\bm{u}^{\star\top} \bm{u}}{\lambda}\bm{u}^\star\right)\right|
 	 = & \left|\frac{\bm{u}^{\star\top} \bm{u}}{\lambda}\sum_{s=1}^{+\infty} \frac{\bm{a}^\top\bm{H}^s\bm{u}^\star}{\lambda^s}\right| \\
 	 \lesssim &  \sum_{s=1}^{20 \log n} \frac{1}{\lambda^s} \left|\bm{a}^\top \bm{H}^s \bm{u}^\star\right| +  \sum_{s= 20 \log n }^{+\infty} \left(\frac{\left\|\bm{H}\right\|}{\lambda} \right)^s \\
		\leq &  \sqrt{\frac{\mu}{n}} \sum_{s=1}^{ 20 \log n} \left(2c_2 \max\left\{B\log n, \sqrt{n \sigma^2 \log n}\right\}\right)^s  + \frac{\max\left\{B \log n, \sqrt{n \sigma^2 \log n }\right\}}{n^{10}}  \\
		\lesssim &  \max\left\{B \log n, \sqrt{n \sigma^2 \log n }\right\} \sqrt{\frac{\mu}{n}},
 	\end{align*}
	as long as  $\max\big\{B \log n, \sqrt{n \sigma^2 \log n } \big\}$ is sufficiently small. Here, the last line also uses the fact that $\mu\geq 1$ (and hence $\sqrt{\mu/n}\gg n^{-10}$). This concludes the proof.

  	\section{Extension: perturbation analysis for the rank-$r$ case}
\label{sec:main-rank-r}

\subsection{Eigenvalue perturbation for the rank-$r$ case}
	The eigenvalue perturbation analysis in Section \ref{sec:rank-1} can be extended to accommodate the case where $\bm{M}^{\star}$ is symmetric and rank-$r$, as detailed in this section.  As before, assume that the $r$ non-zero eigenvalues of $\bm{M}^{\star}$ obey $\lambda_{\max}^{\star} = \big|\lambda_1^{\star}\big| \geq \cdots \geq \big|\lambda_r^{\star}\big|= \lambda_{\min}^{\star}$. Once again, we start with a master bound.
	%
	%
	\begin{thm}[\textbf{Perturbation of linear forms of eigenvectors (rank-$r$)}]
		\label{thm_Linear_Forms_1_rank_r_A}
		Consider a rank-$r$ symmetric matrix $\bm{M}^\star \in \mathbb{R}^{n\times n}$ with
		incoherence parameter $\mu$. Define $\kappa \triangleq \lambda_{\mathrm{max}}^{\star}/\lambda_{\mathrm{min}}^{\star}$. Suppose that
		\begin{equation}
		\label{Bsigma_cond_3}
			\frac{ \max\left\{\sigma \sqrt{n \log n}, B \log n \right\} }{\lambda_{\mathrm{max}}^{\star}} \leq  \frac{c_1}  { \kappa }
		\end{equation}
		for some sufficiently small constants $c_1>0$.
		Then for any fixed unit vector $\bm{a} \in \mathbb{R}^{n}$ and any $1\leq l\leq r$,  with probability at least $1-O(n^{-10})$ one has
		\begin{align}
			\Bigg| \bm{a}^\top \Bigg(\bm{u}_l-\sum_{j=1}^r\frac{\lambda_j^\star\bm{u}_j^{\star\top} \bm{u}_l}{\lambda_l} \bm{u}_j^\star\Bigg) \Bigg| 		
			& \lesssim  \max\left\{\sigma \sqrt{n  \log n }, B \log n \right\}\frac{\kappa}{|\lambda_l|}\sqrt{\frac{\mu r}{n}} \label{thm-rankr-UB1}\\
			& \lesssim  \frac{ \max\left\{\sigma \sqrt{n  \log n }, B \log n \right\} }{ \lambda^{\star}_{\max} } \kappa^2\sqrt{\frac{\mu r}{n}}\label{thm-rankr-UB2}.
		\end{align}
	\end{thm}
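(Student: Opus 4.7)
The plan is to mirror the rank-$1$ proof in Section \ref{sec:rank-1}, invoking the full Neumann expansion now indexed over all $r$ top eigenvectors. Since \eqref{Bsigma_cond_3} combined with Lemma \ref{lem:H-norm} gives $\|\bm{H}\| \leq \lambda_{\min}^{\star}/4$, and Lemma \ref{lem:perturbation-BF-rank-r} localizes $|\lambda_l|\geq \lambda_{\min}^{\star}/2$ for every $1\leq l\leq r$, we have $\|\bm{H}\| \leq |\lambda_l|/2$, so Theorem \ref{thm_Neumann} applies and yields, after subtracting the $s=0$ contribution,
\[
\bm{a}^{\top}\Bigl(\bm{u}_l - \sum_{j=1}^r \frac{\lambda_j^{\star}\bm{u}_j^{\star\top}\bm{u}_l}{\lambda_l}\bm{u}_j^{\star}\Bigr) = \sum_{j=1}^r \frac{\lambda_j^{\star}\,\bm{u}_j^{\star\top}\bm{u}_l}{\lambda_l}\sum_{s=1}^{\infty} \frac{\bm{a}^{\top}\bm{H}^{s}\bm{u}_j^{\star}}{\lambda_l^{s}}.
\]
Everything therefore reduces to controlling $|\bm{a}^{\top}\bm{H}^{s}\bm{u}_j^{\star}|$ uniformly in $j$ and $s$, and then to bookkeeping the prefactors.

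For the per-$j$ bound I would apply Corollary \ref{cor_high_order_terms} to each column $\bm{u}_j^{\star}$ of $\bm{U}^{\star}$: its proof uses only the $\ell_{\infty}$ incoherence of the second vector, which here is $\|\bm{u}_j^{\star}\|_{\infty}\leq\sqrt{\mu/n}$ by Definition \ref{def_incoherence_parameter}. A union bound over $j\in\{1,\dots,r\}$ (absorbed into the $n^{-10}$ probability since $r\leq n$) then yields
\[
|\bm{a}^{\top}\bm{H}^{s}\bm{u}_j^{\star}| \lesssim \bigl(c_2\max\{B\log n,\sqrt{n\sigma^{2}\log n}\}\bigr)^{s}\sqrt{\mu/n}
\]
for all $1\leq j\leq r$ and $1\leq s\leq 20\log n$, with probability $1-O(n^{-10})$. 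The tail $s>20\log n$ is crudely controlled by $(\|\bm{H}\|/|\lambda_l|)^{s}\leq 2^{-s}$, which sums to a negligible $O(n^{-10})$ contribution. Summing the remaining geometric series, which converges because \eqref{Bsigma_cond_3} forces $c_2\max\{B\log n,\sqrt{n\sigma^{2}\log n}\}/|\lambda_l|$ to be a small constant, I obtain
\[
\sum_{s=1}^{\infty}\frac{|\bm{a}^{\top}\bm{H}^{s}\bm{u}_j^{\star}|}{|\lambda_l|^{s}} \lesssim \frac{\max\{B\log n,\sqrt{n\sigma^{2}\log n}\}}{|\lambda_l|}\sqrt{\frac{\mu}{n}},
\]
uniformly in $j$.

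The final step assembles the prefactors. Cauchy--Schwarz together with Lemma \ref{lem_l2_convergence-rank-r} (which gives $\sum_{j=1}^r|\bm{u}_j^{\star\top}\bm{u}_l|^{2}\leq 1$) yields $\sum_{j=1}^r |\lambda_j^{\star}||\bm{u}_j^{\star\top}\bm{u}_l| \leq \lambda_{\max}^{\star}\sqrt{r}$, whence the triangle inequality gives
\[
\Bigl|\bm{a}^{\top}\Bigl(\bm{u}_l-\sum_{j=1}^r\frac{\lambda_j^{\star}\bm{u}_j^{\star\top}\bm{u}_l}{\lambda_l}\bm{u}_j^{\star}\Bigr)\Bigr|\;\lesssim\;\frac{\lambda_{\max}^{\star}\sqrt{r}}{|\lambda_l|}\cdot\frac{\max\{B\log n,\sqrt{n\sigma^{2}\log n}\}}{|\lambda_l|}\sqrt{\frac{\mu}{n}}.
\]
Using $\lambda_{\max}^{\star}/|\lambda_l|\leq 2\kappa$ once reduces this to \eqref{thm-rankr-UB1}, and using it a second time (via $1/|\lambda_l|\leq 2\kappa/\lambda_{\max}^{\star}$) delivers \eqref{thm-rankr-UB2}. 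The main subtlety I anticipate is precisely this Cauchy--Schwarz step: it is where the extra factor $\sqrt{r}$ (relative to the rank-$1$ bound) enters, and one must resist the temptation to bound each $|\bm{u}_j^{\star\top}\bm{u}_l|$ individually by $1$, which would cost a full factor of $r$ rather than $\sqrt{r}$. Everything else is a direct transcription of the rank-$1$ argument, with the incoherence of $\bm{U}^{\star}$ ensuring that Corollary \ref{cor_high_order_terms} can be invoked once per column.
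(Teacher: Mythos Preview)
Your proposal is correct and follows essentially the same route as the paper: Neumann expansion via Theorem \ref{thm_Neumann}, per-column application of Corollary \ref{cor_high_order_terms} with a union bound over $j$, geometric-series summation with a crude tail, and Cauchy--Schwarz to collect the $\sqrt{r}$ factor. One nit: the bound $\sum_{j=1}^r|\bm{u}_j^{\star\top}\bm{u}_l|^{2}\leq 1$ does not come from Lemma \ref{lem_l2_convergence-rank-r} (which gives a \emph{lower} bound) but is immediate from the orthonormality of $\{\bm{u}_j^{\star}\}$ and $\|\bm{u}_l\|_2=1$, exactly as the paper writes it.
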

	This result allows us to control the perturbation of the linear form of eigenvectors.  The perturbation upper bound grows as either the rank $r$ or the condition number $\kappa$ increases.

One of the most important consequences  of Theorem \ref{thm_Linear_Forms_1_rank_r_A} is a refinement of the Bauer-Fike theorem concerning eigenvalue perturbations as follows.
\begin{cor}
	\label{cor:eigenvalue-pertubation-rank-r-A}
	Consider the $l$th ($1\leq l\leq r$) eigenvalue $\lambda_l$ of $\bm{M}$. Under the assumptions of Theorem \ref{thm_Linear_Forms_1_rank_r_A},  with probability at least $1-O(n^{-10})$, there exists $1\leq j\leq r$ such that
	\begin{equation} \label{eq:eigenvalue-perturbation-rank-r-A}
		\big|\lambda_l-\lambda_j^\star \big| \lesssim    \max\left\{\sigma \sqrt{n  \log n }, B \log n \right\} \kappa r\sqrt{\frac{\mu}{n}} ,
	\end{equation}
	provided that
	\begin{equation}
		\label{Bsigma_cond_4}
		\frac{  \max\left\{\sigma \sqrt{n  \log n }, B \log n \right\} } { \lambda^{\star}_{\max} } \leq c_1 / \kappa^2
	\end{equation}
	for some sufficiently small constant $c_1>0$.
\end{cor}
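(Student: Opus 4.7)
\textbf{Proof proposal for Corollary \ref{cor:eigenvalue-pertubation-rank-r-A}.}

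The plan is to mirror the rank-$1$ strategy used in Corollary \ref{cor:eigenvalue-pertubation}: apply the master bound of Theorem \ref{thm_Linear_Forms_1_rank_r_A} to a carefully chosen test vector $\bm{a}$ built from the eigenvectors of $\bm{M}^{\star}$, then extract $|\lambda_l-\lambda_j^\star|$ by algebraic manipulation. Specifically, I would set $\bm{a}=\bm{u}_j^\star$ for each $1\leq j\leq r$. Because $\bm{M}^\star$ is real and symmetric, $\{\bm{u}_k^\star\}_{k=1}^r$ is an orthonormal family in $\mathbb{R}^n$, so $\bm{u}_j^{\star\top}\bm{u}_k^\star=\delta_{jk}$ and the sum on the left-hand side of \eqref{thm-rankr-UB1} collapses to a single term. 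This gives
\begin{equation*}
\Bigl|\bm{u}_j^{\star\top}\bm{u}_l-\tfrac{\lambda_j^\star}{\lambda_l}\bm{u}_j^{\star\top}\bm{u}_l\Bigr|
=\bigl|\bm{u}_j^{\star\top}\bm{u}_l\bigr|\cdot\frac{|\lambda_l-\lambda_j^\star|}{|\lambda_l|}
\lesssim \frac{\kappa}{|\lambda_l|}\sqrt{\frac{\mu r}{n}}\max\bigl\{\sigma\sqrt{n\log n},B\log n\bigr\},
\end{equation*}
i.e.\ $\bigl|\bm{u}_j^{\star\top}\bm{u}_l\bigr|\cdot|\lambda_l-\lambda_j^\star|\lesssim \kappa\sqrt{\mu r/n}\max\{\sigma\sqrt{n\log n},B\log n\}$.

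The remaining task is to locate an index $j^\ast\in\{1,\dots,r\}$ for which the correlation $\bigl|\bm{u}_{j^\ast}^{\star\top}\bm{u}_l\bigr|$ is bounded below by a quantity of order $1/\sqrt{r}$. For this I would invoke Lemma \ref{lem_l2_convergence-rank-r}, which guarantees $\sum_{j=1}^r\bigl|\bm{u}_j^{\star\top}\bm{u}_l\bigr|^2\geq 1-\tfrac{64\kappa^4}{9}\|\bm{H}\|^2/(\lambda^\star_{\min})^2$ once the rescaled noise $\|\bm{H}\|/\lambda_{\max}^\star$ is at most $1/(4\kappa)$. The tighter hypothesis \eqref{Bsigma_cond_4} of the corollary (with $\kappa^2$ rather than $\kappa$) was chosen precisely for this purpose: combined with Lemma \ref{lem:H-norm} it yields $\|\bm{H}\|/\lambda^\star_{\min}\lesssim 1/\kappa$, whence $\sum_{j=1}^r\bigl|\bm{u}_j^{\star\top}\bm{u}_l\bigr|^2\geq 1/2$. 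A pigeonhole step then produces some $j^\ast$ with $\bigl|\bm{u}_{j^\ast}^{\star\top}\bm{u}_l\bigr|\geq 1/\sqrt{2r}$.

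Substituting this lower bound into the displayed inequality and simplifying the constants gives
\begin{equation*}
|\lambda_l-\lambda_{j^\ast}^\star|\lesssim \sqrt{2r}\cdot\kappa\sqrt{\tfrac{\mu r}{n}}\max\bigl\{\sigma\sqrt{n\log n},B\log n\bigr\}\asymp \kappa r\sqrt{\tfrac{\mu}{n}}\max\bigl\{\sigma\sqrt{n\log n},B\log n\bigr\},
\end{equation*}
which is exactly \eqref{eq:eigenvalue-perturbation-rank-r-A}. I would gather the high-probability events used by Theorem \ref{thm_Linear_Forms_1_rank_r_A}, Lemma \ref{lem:H-norm} and Lemma \ref{lem_l2_convergence-rank-r} via the union bound to retain the $1-O(n^{-10})$ guarantee. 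The delicate point of the argument is not the algebra (which is straightforward once the orthogonality collapse is noticed), but rather choosing the right noise condition to make Lemma \ref{lem_l2_convergence-rank-r} non-vacuous: the factor of $\kappa^4\|\bm{H}\|^2$ in that lemma is exactly what forces the strengthened assumption \eqref{Bsigma_cond_4}, and hence the extra $\kappa$ that appears in \eqref{Bsigma_cond_4} compared with \eqref{Bsigma_cond_3}. Once the correct event is secured, the proof is a two-line consequence of Theorem \ref{thm_Linear_Forms_1_rank_r_A}.
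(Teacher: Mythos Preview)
Your proposal is correct and follows essentially the same route as the paper's proof: take $\bm{a}=\bm{u}_j^\star$ in Theorem \ref{thm_Linear_Forms_1_rank_r_A}, use orthonormality of $\{\bm{u}_j^\star\}$ to collapse the sum to $\bigl|\bm{u}_j^{\star\top}\bm{u}_l\bigr|\cdot|\lambda_l-\lambda_j^\star|/|\lambda_l|$, and then invoke Lemma \ref{lem_l2_convergence-rank-r} (enabled by the strengthened condition \eqref{Bsigma_cond_4}) together with pigeonhole to secure an index with $\bigl|\bm{u}_{j^\ast}^{\star\top}\bm{u}_l\bigr|\gtrsim 1/\sqrt{r}$. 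The only cosmetic difference is that the paper first lower-bounds $|\lambda_l-\lambda_i^\star|$ by $\min_j|\lambda_l-\lambda_j^\star|$ and then maximizes over $i$, whereas you select $j^\ast$ directly; the two are equivalent.
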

\begin{proof}
See Appendix \ref{sec:corollary-proof-eigenvalue-rank-r}.
\end{proof}

In comparison, the Bauer-Fike theorem (Lemma \ref{lem:perturbation-BF-rank-r}) together with Lemma \ref{lem:H-norm}   gives a perturbation bound
\begin{equation}
	\label{eq:BF-bound-rank-r}
	\big|\lambda_l-\lambda_j^\star \big| \leq \|\bm{H}\| \lesssim \max\left\{\sigma \sqrt{n  \log n }, B \log n \right\} \qquad \text{for some }1\leq j\leq r.
\end{equation}
For the low-rank case where $r\ll \sqrt{n}$, the eigenvalue perturbation bound derived in Corollary \ref{cor:eigenvalue-pertubation-rank-r-A} can be much sharper than the Bauer-Fike theorem.

Another result that comes from Theorem \ref{thm_Linear_Forms_1_rank_r_A} is the following bound that concerns linear forms of the eigen-subspace.

\begin{cor}
	\label{cor:linear-form-rankr}
	Under the same setting of Theorem \ref{thm_Linear_Forms_1_rank_r_A}, with probability $1-O(n^{-9})$ we have
	\begin{equation} \label{eq:linear-form-rankr}
		\left\|\bm{a}^\top \bm{U}\right\|_2 \lesssim \kappa \sqrt{r} \left\|\bm{a}^\top \bm{U}^\star\right\|_2+ \frac{ \max\left\{\sigma \sqrt{n  \log n }, B \log n \right\} }{  \lambda^{\star}_{\max}  } \kappa^2 r\sqrt\frac{\mu}{n}.
	\end{equation}
\end{cor}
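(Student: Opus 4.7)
The plan is to apply Theorem \ref{thm_Linear_Forms_1_rank_r_A} componentwise to each index $l \in [r]$ and then reassemble the coordinate-wise bounds via elementary linear algebra. Specifically, I would introduce the $r\times r$ matrix $\bm{T}$ with entries $T_{lj} := \lambda_j^\star\, \bm{u}_j^{\star\top}\bm{u}_l/\lambda_l$, so that Theorem \ref{thm_Linear_Forms_1_rank_r_A} reads, for each $l$,
$$
\left|\bm{a}^\top \bm{u}_l - \bigl(\bm{T}\,\bm{U}^{\star\top} \bm{a}\bigr)_l\right| \;\lesssim\; \epsilon_n, \qquad \text{where}\quad \epsilon_n \,:=\, \frac{\max\{\sigma\sqrt{n\log n},\,B\log n\}}{\lambda^\star_{\max}}\,\kappa^2\sqrt{\tfrac{\mu r}{n}}.
$$
A union bound over $l\in[r]\subseteq[n]$ then degrades the per-coordinate failure probability $O(n^{-10})$ to the advertised $O(n^{-9})$. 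Stacking the coordinates and invoking the triangle inequality gives
$$
\|\bm{a}^\top \bm{U}\|_2 \;\leq\; \|\bm{T}\|\,\|\bm{U}^{\star\top}\bm{a}\|_2 + \sqrt{r}\,\epsilon_n.
$$
Since $\sqrt{r}\,\epsilon_n$ already reproduces the second (noise) term in \eqref{eq:linear-form-rankr}, the only remaining task is to establish $\|\bm{T}\|\lesssim \kappa\sqrt{r}$.

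For $\|\bm{T}\|$, I would first pin $|\lambda_l|$ away from zero. Lemma \ref{lem:H-norm} combined with condition \eqref{Bsigma_cond_3} yields $\|\bm{H}\|\lesssim \max\{\sigma\sqrt{n\log n},B\log n\}\leq \lambda^\star_{\min}/2$ (for $c_1$ small enough), and then Lemma \ref{lem:perturbation-BF-rank-r} immediately gives $|\lambda_l|\geq \lambda^\star_{\min}/2$ for every $1\leq l\leq r$. Consequently $|T_{lj}|\leq 2\kappa\,|\bm{u}_j^{\star\top}\bm{u}_l|$, and the key computation is
$$
\|\bm{T}\|^2 \;\leq\; \|\bm{T}\|_F^2 \;\leq\; 4\kappa^2\sum_{l,j=1}^{r}\bigl(\bm{u}_j^{\star\top}\bm{u}_l\bigr)^2 \;=\; 4\kappa^2\,\|\bm{U}^{\star\top}\bm{U}\|_F^2 \;\leq\; 4\kappa^2 r,
$$
where the last step uses that both $\bm{U}$ and $\bm{U}^\star$ have orthonormal columns, so every singular value of the $r\times r$ matrix $\bm{U}^{\star\top}\bm{U}$ lies in $[0,1]$ and its Frobenius norm is at most $\sqrt{r}$. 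Substituting back yields the desired bound $\|\bm{a}^\top\bm{U}\|_2 \lesssim \kappa\sqrt{r}\,\|\bm{a}^\top\bm{U}^\star\|_2 + \sqrt{r}\,\epsilon_n$.

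Essentially all the heavy lifting has already been done inside Theorem \ref{thm_Linear_Forms_1_rank_r_A}; this corollary is then just a matter of assembling coordinates and exploiting orthonormality. The only place where a reader might expect a snag is the eigenvalue lower bound $|\lambda_l|\gtrsim \lambda^\star_{\min}$, but condition \eqref{Bsigma_cond_3} has been calibrated precisely so that this is a direct consequence of Lemmas \ref{lem:H-norm} and \ref{lem:perturbation-BF-rank-r}. I do not expect any genuine obstacle beyond bookkeeping.
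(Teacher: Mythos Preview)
Your approach is essentially the same as the paper's: both apply Theorem \ref{thm_Linear_Forms_1_rank_r_A} coordinate-wise, separate the ``signal'' and ``noise'' pieces by the triangle (Minkowski) inequality, and then control the signal piece via a Cauchy--Schwarz/Frobenius argument using $|\lambda_j^\star/\lambda_l|\lesssim\kappa$ and $\sum_{j}|\bm{u}_j^{\star\top}\bm{u}_l|^2\leq 1$. Your matrix packaging with $\bm{T}$ is just a cleaner way to phrase the paper's per-row Cauchy--Schwarz step.

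There is, however, one genuine slip in the justification. You assert that ``both $\bm{U}$ and $\bm{U}^\star$ have orthonormal columns,'' and use this to say that every singular value of $\bm{U}^{\star\top}\bm{U}$ lies in $[0,1]$. This is false here: $\bm{M}$ is asymmetric, so its eigenvectors $\bm{u}_1,\ldots,\bm{u}_r$ are unit vectors but need \emph{not} be mutually orthogonal (the paper explicitly flags ``the lack of orthonormality of the eigenvectors when dealing with asymmetric matrices'' as the central obstacle in the rank-$r$ case). Fortunately your conclusion $\|\bm{U}^{\star\top}\bm{U}\|_F\leq\sqrt{r}$ survives, for a different reason: since $\bm{U}^\star$ has orthonormal columns one has $\|\bm{U}^{\star\top}\bm{u}_l\|_2\leq\|\bm{u}_l\|_2=1$ for each $l$, hence
\[
\|\bm{U}^{\star\top}\bm{U}\|_F^2=\sum_{l=1}^r\|\bm{U}^{\star\top}\bm{u}_l\|_2^2\leq r.
\]
This is exactly the fact $\sum_{j=1}^r|\bm{u}_j^{\star\top}\bm{u}_l|^2\leq 1$ that the paper's proof invokes. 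With this correction, your argument goes through.
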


\begin{proof}
See Appendix \ref{sec:corollary-proof-linear-form-rankr}.
\end{proof}

Consequently, by taking $\bm{a} = \bm{e}_i$ ($1\leq i\leq n$) in Corollary \ref{cor:linear-form-rankr}, we arrive at the following statement regarding the alternative definition of the incoherence of the eigenvector matrix $\bm{U}$ (see Remark \ref{remark:alternative-incoherence}).

\begin{cor}
	\label{cor:entrywise-rankr}
	Under the same setting of Theorem \ref{thm_Linear_Forms_1_rank_r_A}, with probability $1-O(n^{-8})$ we have
	\begin{equation} \label{eq:entrywise-rankr}
	\left\| \bm{U}\right\|_{2,\infty} \lesssim  \kappa r\sqrt\frac{\mu}{n}.
	\end{equation}
\end{cor}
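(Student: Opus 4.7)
The plan is to reduce this corollary directly to Corollary \ref{cor:linear-form-rankr} via the choice $\bm{a} = \bm{e}_i$ and a union bound, with the incoherence of $\bm{U}^\star$ controlling the first (deterministic-looking) term on the right-hand side of \eqref{eq:linear-form-rankr}. There is essentially no new probabilistic work to do; the main point is to verify that the bound simplifies to the clean form $\kappa r\sqrt{\mu/n}$ once the signal-to-noise condition \eqref{Bsigma_cond_3} is invoked.

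First, I would fix an index $1 \leq i \leq n$ and apply Corollary \ref{cor:linear-form-rankr} with $\bm{a} = \bm{e}_i$. The left-hand side becomes $\|\bm{e}_i^\top \bm{U}\|_2$, i.e.\ the $\ell_2$-norm of the $i$-th row of $\bm{U}$. For the first term on the right-hand side, I would use Definition \ref{def_incoherence_parameter}, which yields $|U^\star_{ij}| \leq \sqrt{\mu/n}$ for every entry and hence
\[
\|\bm{e}_i^\top \bm{U}^\star\|_2 \;\leq\; \sqrt{r}\,\|\bm{U}^\star\|_\infty \;\leq\; \sqrt{\mu r/n}.
\]
Substituting into \eqref{eq:linear-form-rankr} gives, with probability $1 - O(n^{-9})$,
\[
\|\bm{e}_i^\top \bm{U}\|_2 \;\lesssim\; \kappa\sqrt{r}\cdot\sqrt{\mu r/n} \;+\; \frac{\max\{\sigma\sqrt{n\log n},\,B\log n\}}{\lambda^\star_{\max}}\,\kappa^2 r\sqrt{\mu/n}.
\]

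Next, I would simplify the second term using the noise-size hypothesis \eqref{Bsigma_cond_3}, which gives $\max\{\sigma\sqrt{n\log n}, B\log n\}/\lambda^\star_{\max} \leq c_1/\kappa$. Hence the second summand is at most $c_1 \kappa r \sqrt{\mu/n}$, which is dominated by the first. Combining,
\[
\|\bm{e}_i^\top \bm{U}\|_2 \;\lesssim\; \kappa r\sqrt{\mu/n}.
\]

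Finally, I would take the union bound over $i = 1,\dots,n$. Since each fixed $\bm{a} = \bm{e}_i$ invokes Corollary \ref{cor:linear-form-rankr} with success probability $1 - O(n^{-9})$, the bad event occurs for some $i$ with probability at most $n \cdot O(n^{-9}) = O(n^{-8})$. On the complement,
\[
\|\bm{U}\|_{2,\infty} \;=\; \max_{1 \leq i \leq n}\|\bm{e}_i^\top \bm{U}\|_2 \;\lesssim\; \kappa r\sqrt{\mu/n},
\]
as desired. There is no real obstacle here; the only small check is that the implicit constants in \eqref{eq:linear-form-rankr} do not depend on $\bm{a}$, so the same constant works uniformly across the union bound, and that the second term is genuinely absorbed (which is where \eqref{Bsigma_cond_3} is used).
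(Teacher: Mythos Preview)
Your proposal is correct and follows essentially the same approach as the paper: apply Corollary~\ref{cor:linear-form-rankr} with $\bm{a}=\bm{e}_i$, bound $\|\bm{e}_i^\top\bm{U}^\star\|_2\le\sqrt{\mu r/n}$ via incoherence, absorb the second term using \eqref{Bsigma_cond_3}, and take the union bound over $i$.
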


\begin{proof}
	Given that $\left\|\bm{U}\right\|_{2, \infty} = \max_{1 \leq i \leq n} \left\|\bm{e}_i^\top \bm{U}\right\|_2$ and recalling our assumption implies $\left\|\bm{U}^\star\right\|_{2, \infty}  \leq \sqrt{\mu r/n}$, we can invoke Corollary \ref{cor:linear-form-rankr} and the union bound to derive the advertised entrywise bounds.
\end{proof}
\begin{remark}
The eigenvector matrix is often employed to form a reasonably good initial guess for several nonconvex statistical estimation problems \cite{KesMonSew2010}, and
the above kind of incoherence property  is crucial in guaranteeing fast convergence of the subsequent nonconvex iterative refinement procedures \cite{ma2017implicit}.
\end{remark}

Unfortunately, these results  fall short of providing simple perturbation bounds for the eigenvectors; in other words, the above-mentioned bounds do not imply the size of the difference between $\bm{U}$ and $\bm{U}^{\star}$.
The challenge arises in part due to the lack of orthonormality of the eigenvectors when dealing with asymmetric matrices.  Analyzing the eigenspace perturbation for the general rank-$r$ case will likely require new analysis techniques, which we leave for future work. There is, however, some special case in which we can develop eigenvector perturbation theory, as detailed in the next subsection.

	\begin{remark}
		The theory for the rank-$r$ case has recently been significantly improved; see our follow-up work \cite{cheng2020inference} for details. 
	\end{remark}

\subsection{Application: spectral estimation when \texorpdfstring{$\bm{M}^{\star}$}{the truth}   is asymmetric and rank-1}
\label{sec:asymmetric-rank1}

In some scenarios,
the above general rank results allow us to improve spectral estimation when $\bm{M}^{\star}$ is asymmetric.  Consider the case where $\bm{M}^{\star}= \lambda^{\star} \bm{u}^\star\bm{v}^{\star\top}\in \mathbb{R}^{n_1\times n_2}$ is an asymmetric rank-1 matrix with leading singular value $\lambda^{\star}$. Suppose that we observe two independent noisy copies of $\bm{M}^{\star}$, namely,
\begin{equation}
	\bm{M}_1 = \bm{M}^{\star} + \bm{H}_1, \qquad \bm{M}_2 = \bm{M}^{\star} + \bm{H}_2,
\end{equation}
where $\bm{H}_1$ and $\bm{H}_2$ are independent noise matrices.   The goal is to estimate the singular value and singular vectors of $\bm{M}^{\star}$ from $\bm{M}_1$ and $\bm{M}_2$.

We attempt estimation via the standard dilation trick (e.g.~\cite{Tao2012RMT}). This consists of embedding the matrices of interest within a larger  block matrix

%
\begin{equation}
	\label{eq:defn-dilation}
	\bm{M}_{\mathsf{d}}^{\star} \triangleq \left[\begin{array}{cc}
\bm{0} & \bm{M}^{\star}\\
\bm{M}^{\star\top} & \bm{0}
\end{array}\right],
\qquad
\bm{M}_{\mathsf{d}} \triangleq \left[\begin{array}{cc}
\bm{0} & \bm{M}_1\\
\bm{M}_2^{\top} & \bm{0}
\end{array}\right].
\end{equation}
%
Here, we place $\bm{M}_1$ and $\bm{M}_2$ in two different subblocks, in order to ``asymmetrize'' the dilation matrix.
The rationale is that $\bm{M}_{\mathsf{d}}^{\star}$ is a rank-2 symmetric matrix with exactly two nonzero eigenvalues
\[
	\lambda_1(\bm{M}_{\mathsf{d}}^{\star}) = \lambda^{\star}  \qquad  and \qquad \lambda_2(\bm{M}_{\mathsf{d}}^{\star}) = - \lambda^{\star} ,
\]
whose corresponding eigenvectors are given by
\[
\frac{1}{\sqrt{2}} \begin{pmatrix}
\bm{u}^{\star} \\
\bm{v}^{\star}
\end{pmatrix}
\qquad  and \qquad
\frac{1}{\sqrt{2}} \begin{pmatrix}
\bm{u}^{\star} \\
-\bm{v}^{\star}
\end{pmatrix},
\]
respectively.
This motivates us to perform eigen-decomposition of $\bm{M}_{\mathsf{d}}$, and use the top-2 eigenvalues and eigenvectors to estimate $\lambda^{\star}$, $\bm{u}^{\star}$ and $\bm{v}^{\star}$, respectively.

\bigskip
\noindent {\bf Eigenvalue perturbation analysis.} As an immediate consequence of  Corollary \ref{cor:eigenvalue-pertubation-rank-r-A},   the two leading eigenvalues of $\bm{M}_{\mathsf{d}}$ provide fairly accurate estimates of  the leading singular value $\lambda^{\star}$  of $\bm{M}^{\star}$, as stated below.

\begin{cor}
	\label{cor:eigenvalue-pertubation-rank-2}
	 Assume  $\bm{M}^{\star}\in \mathbb{R}^{n_1\times n_2}$ is a rank-1 matrix with leading singular value $\lambda^{\star}$ and  incoherence parameter $\mu$. 		Define $n \triangleq n_1 + n_2$.
	 Suppose that $\lambda_1^{\mathsf{d}} \geq \lambda_2^{\mathsf{d}}$ are the two leading eigenvalues of  $\bm{M}_{\mathsf{d}}$ (cf.~\eqref{eq:defn-dilation}), and that $\bm{H}_1$ and $\bm{H}_2$ are independent and satisfy Assumption \ref{assumption:H}.
	Then with probability at least $1-O(n^{-10})$,
	\begin{equation} \label{eq:eigenvalue-perturbation-rank-2}
		\max\big\{ \big| \lambda_1^{\mathsf{d}} - \lambda^{\star} \big|, \,\big| \lambda_2^{\mathsf{d}} + \lambda^{\star} \big| \big\} \lesssim   \max\left\{\sigma \sqrt{n \log n}, B \log n \right\}\sqrt{\frac{\mu}{n}} ,
	\end{equation}
	provided that
	\begin{equation}
		\frac{ \max\left\{\sigma \sqrt{n \log n}, B \log n \right\} }{\lambda^{\star}} \leq c_1
	\end{equation}
	for some sufficiently small constant $c_1>0$.
\end{cor}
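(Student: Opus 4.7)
The plan is to invoke Corollary \ref{cor:eigenvalue-pertubation-rank-r-A} on the symmetric rank-$2$ dilation $\bm{M}_{\mathsf{d}}^\star$ with perturbation $\bm{H}_{\mathsf{d}} \triangleq \bm{M}_{\mathsf{d}} - \bm{M}_{\mathsf{d}}^\star = \bigl(\begin{smallmatrix}\bm{0} & \bm{H}_1 \\ \bm{H}_2^\top & \bm{0}\end{smallmatrix}\bigr)$, and then disambiguate which of the two leading eigenvalues of $\bm{M}_{\mathsf{d}}$ pairs with $+\lambda^\star$ versus $-\lambda^\star$.

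First I would verify the ingredients required by Corollary \ref{cor:eigenvalue-pertubation-rank-r-A}. Since the two non-zero eigenvalues of $\bm{M}_{\mathsf{d}}^\star$ are $\pm\lambda^\star$, we have $\kappa=1$ and $\lambda_{\max}^\star(\bm{M}_{\mathsf{d}}^\star)=\lambda^\star$, so condition \eqref{Bsigma_cond_4} reduces exactly to the assumption imposed on $\max\{\sigma\sqrt{n\log n}, B\log n\}/\lambda^\star$. The eigenvector matrix of $\bm{M}_{\mathsf{d}}^\star$ is $\tfrac{1}{\sqrt{2}}\bigl(\begin{smallmatrix}\bm{u}^\star & \bm{u}^\star \\ \bm{v}^\star & -\bm{v}^\star\end{smallmatrix}\bigr)$, whose entrywise $\ell_\infty$ norm is $\tfrac{1}{\sqrt{2}}\max\{\|\bm{u}^\star\|_\infty,\|\bm{v}^\star\|_\infty\}\lesssim \sqrt{\mu/n}$ under the natural reading of incoherence for a rectangular matrix (namely $\|\bm{u}^\star\|_\infty\leq\sqrt{\mu/n_1}$ and $\|\bm{v}^\star\|_\infty\leq\sqrt{\mu/n_2}$ with $n_1\asymp n_2$, modulo an $O(1)$ factor otherwise). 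Finally, $\bm{H}_{\mathsf{d}}$ satisfies Assumption \ref{assumption:H} with the original $(\sigma, B)$: its nonzero block entries are the entries of the mutually independent noise matrices $\bm{H}_1$ and $\bm{H}_2$, so they are independent, zero-mean, of variance $\leq\sigma^2$, and bounded in magnitude by $B$, while the deterministic zero blocks are degenerate and trivially meet all four conditions of the assumption.

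Applying Corollary \ref{cor:eigenvalue-pertubation-rank-r-A} with $r=2$ and $\kappa=1$ then yields, with probability $1-O(n^{-10})$, that each of the two largest-modulus eigenvalues $\widetilde\lambda_1,\widetilde\lambda_2$ of $\bm{M}_{\mathsf{d}}$ lies within $O\bigl(\max\{\sigma\sqrt{n\log n}, B\log n\}\sqrt{\mu/n}\bigr)$ of either $+\lambda^\star$ or $-\lambda^\star$. Because this perturbation bound is at most $c_1'\lambda^\star$ for some small constant while the two targets are separated by $2\lambda^\star$, the two $\widetilde\lambda_i$ cannot both pair with the same target, so one is close to $+\lambda^\star$ and the other to $-\lambda^\star$. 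In particular each of them has imaginary part of size at most the same perturbation bound, so under the convention $\lambda_1^{\mathsf{d}}\geq\lambda_2^{\mathsf{d}}$ we must have $\lambda_1^{\mathsf{d}}$ paired with $+\lambda^\star$ and $\lambda_2^{\mathsf{d}}$ paired with $-\lambda^\star$, delivering the stated bound on $\max\{|\lambda_1^{\mathsf{d}}-\lambda^\star|,|\lambda_2^{\mathsf{d}}+\lambda^\star|\}$.

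The main obstacle, though largely bookkeeping, lies in the reduction step: one must check that the block-structured perturbation $\bm{H}_{\mathsf{d}}$ fits cleanly into Assumption \ref{assumption:H} (in particular that the deterministic zero blocks are compatible with the independence requirement, which they are trivially), and that the incoherence of the stacked eigenvectors of $\bm{M}_{\mathsf{d}}^\star$ inherits the incoherence of $\bm{u}^\star$ and $\bm{v}^\star$ with the correct $\sqrt{\mu/n}$ scaling. Once those verifications are in place, the conclusion follows mechanically from the rank-$r$ machinery already established, with the $\kappa=1$ structure of the dilation eliminating the $\kappa$ factors that would otherwise appear on the right-hand side of \eqref{eq:eigenvalue-perturbation-rank-r-A}.
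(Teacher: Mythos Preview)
Your approach is essentially the same as the paper's: apply Corollary~\ref{cor:eigenvalue-pertubation-rank-r-A} to the rank-$2$ dilation (with $\kappa=1$), then identify which leading eigenvalue pairs with $+\lambda^\star$ and which with $-\lambda^\star$. Your verification that $\bm{H}_{\mathsf{d}}$ satisfies Assumption~\ref{assumption:H} and that the stacked eigenvectors inherit the incoherence is correct bookkeeping that the paper simply takes for granted.

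There is one genuine gap in your separation step. From Corollary~\ref{cor:eigenvalue-pertubation-rank-r-A} you only get that \emph{each} of the two leading eigenvalues is close to \emph{some} element of $\{+\lambda^\star,-\lambda^\star\}$; the corollary does not assert a one-to-one pairing. The fact that the two targets are $2\lambda^\star$ apart does not by itself preclude both $\widetilde\lambda_1$ and $\widetilde\lambda_2$ from landing in the disk around $+\lambda^\star$. The paper closes this gap by invoking the continuity-of-spectrum argument from the proof of Lemma~\ref{lem:perturbation-BF-rank-r}: along the homotopy $\bm{M}_{\mathsf{d}}^\star + t\,\bm{H}_{\mathsf{d}}$ for $t\in[0,1]$, the eigenvalues vary continuously and the two disks $\mathcal{B}(\lambda^\star,\|\bm{H}_{\mathsf{d}}\|)$ and $\mathcal{B}(-\lambda^\star,\|\bm{H}_{\mathsf{d}}\|)$ remain disjoint, so the count of eigenvalues in each disk is preserved from $t=0$ (one in each) to $t=1$. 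Once you add this ingredient, the ordering convention $\lambda_1^{\mathsf{d}}\geq\lambda_2^{\mathsf{d}}$ forces the claimed pairing and the proof is complete.
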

\begin{proof}
	To begin with, it follows from Corollary \ref{cor:eigenvalue-pertubation-rank-r-A} that both $\lambda_1^{\mathsf{d}}$ and $\lambda_2^{\mathsf{d}}$  are close to either $\lambda^{\star}$ or $-\lambda^{\star}$.
	Repeating similar arguments as in the proof of Lemma \ref{lem:perturbation-BF-rank-r} (which we omit here), we can immediately show the separation between these two eigenvalues, namely, $\lambda_1^{\mathsf{d}}$ (resp.~$\lambda_2^{\mathsf{d}}$) is close to $\lambda^{\star}$ (resp.~$-\lambda^{\star}$).
\end{proof}

\bigskip
\noindent {\bf Eigenvector perturbation analysis.} We then move on to studying the eigenvector perturbation bounds. Specifically, denote by $\bm{u}_1^{\mathsf{d}}$ and $\bm{u}_2^{\mathsf{d}}$ the eigenvectors of $\bm{M}_{\mathsf{d}}$ associated with its two leading eigenvalues $\lambda_1^{\mathsf{d}}$ and $\lambda_2^{\mathsf{d}}$, respectively. Without loss of generality, we assume that $\lambda_1^{\mathsf{d}}\geq \lambda_2^{\mathsf{d}}$. If we write
$$
\bm{u}_1^{\mathsf{dilation}} = \begin{pmatrix}
\bm{u}_{1,1}^{\mathsf{d}} \\
\bm{u}_{1,2}^{\mathsf{d}}
\end{pmatrix} \qquad \text{with } \bm{u}_{1,1}^{\mathsf{d}} \in \mathbb{R}^{n_1}, \bm{u}_{1,2}^{\mathsf{d}} \in \mathbb{R}^{n_2},
$$
then we can employ $\bm{u}_{1,1}^{\mathsf{d}}$ and $\bm{u}_{1,2}^{\mathsf{d}}$ to estimate $\bm{u}^{\star}$ and $\bm{v}^{\star}$ after proper normalization, namely,
\begin{equation}
	\label{eq:rank-1-inferred-singular-vectors}
	 \bm{u}  \triangleq \frac{\bm{u}_{1,1}^{\mathsf{d}}}{\left\| \bm{u}_{1,1}^{\mathsf{d}} \right\|_2}, \qquad \bm{v} \triangleq  \frac{\bm{u}_{1,2}^{\mathsf{d}}}{ \| \bm{u}_{1,2}^{\mathsf{d}} \|_2 }.
\end{equation}
The following theorem develops error bounds for both $\bm{u}$ and $\bm{v}$, which we establish in Appendix \ref{appendix:proof-rank_2_dilation_matrix}.  Here, we denote $\min \|\bm{x}\pm\bm{y}\|_2  = \min\{ \|\bm{x}-\bm{y}\|_2, \|\bm{x}+\bm{y}\|_2\}$, and $\min \|\bm{x}\pm\bm{y}\|_{\infty}  = \min\{ \|\bm{x}-\bm{y}\|_{\infty}, \|\bm{x}+\bm{y}\|_{\infty}\}$.
\begin{thm}
	\label{cor:l2_convergence-rank-2-dilation}
	Suppose $\bm{M}^{\star} = \lambda^{\star} \bm{u}^\star \bm{v}^{\star\top} \in \mathbb{R}^{n_1 \times n_2}$ is a rank-$1$  matrix with leading singular value $\lambda^{\star}$ and incoherence parameter $\mu$, where $\|\bm{u}^{\star}\|_2=\|\bm{v}^{\star}\|_2=1$. 		Define $n \triangleq n_1 + n_2$, and fix any unit vectors $\bm{a} \in \mathbb{R}^{n_1}$ and $\bm{b} \in \mathbb{R}^{n_2}$. Then with probability at least $1-O(n^{-10})$,  the estimates ${\bm{u}}$ and ${\bm{v}}$ (cf.~\eqref{eq:rank-1-inferred-singular-vectors}) obey
	%
	\begin{subequations}
	\begin{align}
		\max\big\{ \min\|\bm{u}\pm \bm{u}^{\star}\|_{2},  ~\min\|\bm{v} \pm \bm{v}^{\star}\|_{2} \big\}  & \lesssim  \frac{ \max\left\{\sigma \sqrt{n  \log n}, B \log n\right\} }{ \lambda^{\star} } , \\
		\max\big\{ \min \left\|\bm{u} \pm \bm{u}^{\star}\right\|_\infty , ~\min \left\|\bm{v} \pm \bm{v}^{\star}\right\|_\infty  \big\} & \lesssim  \frac{ \max\left\{\sigma \sqrt{n \log n }, B \log n \right\} }{ \lambda^{\star} } \sqrt{\frac{\mu}{n}}, \\
		\min\big\{ \big|\bm{a}^{\top} (\bm{u} - \bm{u}^{\star})\big|, \big|\bm{a}^{\top} (\bm{u} + \bm{u}^{\star})\big| \big\}   & \lesssim  \left( |\bm{a}^{\top} \bm{u}^{\star}| + \sqrt{\frac{\mu}{n}}\right)  \frac{ \max\left\{\sigma \sqrt{n \log n }, B \log n \right\} }{ \lambda^{\star} } , \\
		\min\big\{ \big|\bm{b}^{\top} (\bm{v} - \bm{v}^{\star})\big|, \big|\bm{b}^{\top} (\bm{v} + \bm{v}^{\star})\big| \big\}   & \lesssim  \left( |\bm{b}^{\top} \bm{v}^{\star}| + \sqrt{\frac{\mu}{n}}\right)  \frac{ \max\left\{\sigma \sqrt{n \log n }, B \log n \right\} }{ \lambda^{\star} } ,
	\end{align}
	\end{subequations}
	provided that there exists some some sufficiently small constant $c_1>0$ such that
	\begin{equation}
		 \frac{ \max\left\{\sigma \sqrt{n \log n }, B \log n \right\} }{ \lambda^{\star} } \leq c_1 .
	\end{equation}
\end{thm}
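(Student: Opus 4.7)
The plan is to reduce the asymmetric rank-$1$ problem to the rank-$2$ symmetric framework developed in Section \ref{sec:main-rank-r}. Setting $\bm{H}_{\mathsf{d}}=\bigl(\begin{smallmatrix}\bm{0}&\bm{H}_1\\ \bm{H}_2^{\top}&\bm{0}\end{smallmatrix}\bigr)$, I would first verify that $\bm{M}_{\mathsf{d}}=\bm{M}_{\mathsf{d}}^{\star}+\bm{H}_{\mathsf{d}}$ falls under our standing assumptions: the independence of $\bm{H}_1$ and $\bm{H}_2$, combined with Assumption \ref{assumption:H} applied to each, makes $\bm{H}_{\mathsf{d}}$ an independent, zero-mean noise matrix with the same $\sigma$ and $B$; the rank-$2$ symmetric truth $\bm{M}_{\mathsf{d}}^{\star}$ has eigenvalues $\pm\lambda^{\star}$ (hence $\kappa=1$), and its eigenvector matrix has $\ell_{\infty}$ norm bounded by $\sqrt{\mu/n}$ in dimension $n=n_{1}+n_{2}$ since $n\le 2\max(n_{1},n_{2})$. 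Theorem \ref{thm_Linear_Forms_1_rank_r_A} and its corollaries thus apply with $(r,\kappa)=(2,1)$.

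Next I would convert the resulting master bound into componentwise statements on $\bm{u}_{1}^{\mathsf{d}}=(\bm{u}_{1,1}^{\mathsf{d}};\bm{u}_{1,2}^{\mathsf{d}})$. Writing $\alpha\triangleq\bm{u}^{\star\top}\bm{u}_{1,1}^{\mathsf{d}}$ and $\beta\triangleq\bm{v}^{\star\top}\bm{u}_{1,2}^{\mathsf{d}}$ and substituting the explicit forms $\bm{u}_{1}^{\star,\mathsf{d}}=\frac{1}{\sqrt{2}}(\bm{u}^{\star};\bm{v}^{\star})$, $\bm{u}_{2}^{\star,\mathsf{d}}=\frac{1}{\sqrt{2}}(\bm{u}^{\star};-\bm{v}^{\star})$ with $\lambda_{1}^{\star}=\lambda^{\star}=-\lambda_{2}^{\star}$ into the Neumann approximation appearing in Theorem \ref{thm_Linear_Forms_1_rank_r_A}, the two cross terms cancel and the approximation collapses to
\[
\bm{u}_{1,1}^{\mathsf{d}}\;\approx\;\frac{\lambda^{\star}\beta}{\lambda_{1}^{\mathsf{d}}}\bm{u}^{\star},\qquad\bm{u}_{1,2}^{\mathsf{d}}\;\approx\;\frac{\lambda^{\star}\alpha}{\lambda_{1}^{\mathsf{d}}}\bm{v}^{\star}
\]
in the sense of linear forms. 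Testing against a block-embedded unit vector $(\bm{a};\bm{0})$ or $(\bm{0};\bm{b})$, the approximation error is $\lesssim\sqrt{\mu/n}\cdot\max\{\sigma\sqrt{n\log n},B\log n\}/\lambda^{\star}$; applying this with $\bm{a}=\bm{e}_{i}$ plus a union bound handles the $\ell_{\infty}$ bounds, and with general unit $\bm{a},\bm{b}$ handles the linear forms---mirroring the passage from Theorem \ref{thm_Linear_Forms_1} to Corollary \ref{cor:linear-form-rank1}, once the prefactors $\lambda^{\star}\alpha/\lambda_{1}^{\mathsf{d}}$ and $\lambda^{\star}\beta/\lambda_{1}^{\mathsf{d}}$ are shown to be close to $\pm 1$.

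Pinning down those prefactors (and the block norms that enter the normalizations $\bm{u}=\bm{u}_{1,1}^{\mathsf{d}}/\|\bm{u}_{1,1}^{\mathsf{d}}\|_{2}$, $\bm{v}=\bm{u}_{1,2}^{\mathsf{d}}/\|\bm{u}_{1,2}^{\mathsf{d}}\|_{2}$) also produces the $\ell_{2}$ estimate. I would expand $\bm{u}_{1}^{\mathsf{d}}$ in the orthonormal basis $\{\bm{u}_{1}^{\star,\mathsf{d}},\bm{u}_{2}^{\star,\mathsf{d}}\}$ plus an orthogonal remainder: Lemma \ref{lem_l2_convergence-rank-r} bounds the remainder by $O(\|\bm{H}_{\mathsf{d}}\|/\lambda^{\star})$, while projecting the eigen-equation $\bm{M}_{\mathsf{d}}\bm{u}_{1}^{\mathsf{d}}=\lambda_{1}^{\mathsf{d}}\bm{u}_{1}^{\mathsf{d}}$ onto $\bm{u}_{2}^{\star,\mathsf{d}}$ gives $(\lambda_{1}^{\mathsf{d}}+\lambda^{\star})\,\langle\bm{u}_{2}^{\star,\mathsf{d}},\bm{u}_{1}^{\mathsf{d}}\rangle=\langle\bm{u}_{2}^{\star,\mathsf{d}},\bm{H}_{\mathsf{d}}\bm{u}_{1}^{\mathsf{d}}\rangle$. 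Combined with the eigenvalue separation $\lambda_{1}^{\mathsf{d}}+\lambda^{\star}\asymp 2\lambda^{\star}$ from Corollary \ref{cor:eigenvalue-pertubation-rank-2}, this forces $|\langle\bm{u}_{2}^{\star,\mathsf{d}},\bm{u}_{1}^{\mathsf{d}}\rangle|\lesssim\|\bm{H}_{\mathsf{d}}\|/\lambda^{\star}$, so $\bm{u}_{1}^{\mathsf{d}}\approx\pm\bm{u}_{1}^{\star,\mathsf{d}}$ in $\ell_{2}$, yielding simultaneously $\|\bm{u}_{1,k}^{\mathsf{d}}\mp\bm{u}_{k}^{\star}/\sqrt{2}\|_{2}\lesssim\|\bm{H}_{\mathsf{d}}\|/\lambda^{\star}$, $\|\bm{u}_{1,k}^{\mathsf{d}}\|_{2}=1/\sqrt{2}+O(\|\bm{H}_{\mathsf{d}}\|/\lambda^{\star})$, and $\alpha,\beta=\pm 1/\sqrt{2}+O(\|\bm{H}_{\mathsf{d}}\|/\lambda^{\star})$ with matching sign. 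A standard normalization identity then converts these estimates into the advertised bounds on $\min\|\bm{u}\pm\bm{u}^{\star}\|_{2}$, $\min\|\bm{u}\pm\bm{u}^{\star}\|_{\infty}$, and the linear forms; the corresponding statements for $\bm{v}$ follow by symmetry. I expect this final coordination step---tracking signs, showing that $\alpha$ and $\beta$ share a common sign, and confirming $\lambda^{\star}\beta/(\lambda_{1}^{\mathsf{d}}\|\bm{u}_{1,1}^{\mathsf{d}}\|_{2})\approx\pm 1$ so that normalization does not inflate the error rate---to be the main obstacle, since everything upstream is a direct invocation of the rank-$r$ machinery.
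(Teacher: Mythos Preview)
Your proposal is correct and follows essentially the same route as the paper: reduce to the rank-$2$ symmetric dilation, invoke Theorem~\ref{thm_Linear_Forms_1_rank_r_A} with block test vectors for the linear-form and entrywise bounds, use Lemma~\ref{lem_l2_convergence-rank-r} together with a near-orthogonality estimate $|\langle \widetilde{\bm{u}}_2^{\star},\widetilde{\bm{u}}_1\rangle|\ll 1$ for the $\ell_2$ bound, and then push everything through the normalization $\bm{u}=\widetilde{\bm{u}}_{1,1}/\|\widetilde{\bm{u}}_{1,1}\|_2$. The only notable difference is how near-orthogonality is obtained---the paper applies the master bound with $\bm{a}=\widetilde{\bm{u}}_2^{\star}$ to get the sharper estimate $|\widetilde{\bm{u}}_2^{\star\top}\widetilde{\bm{u}}_1|\lesssim \sqrt{\mu/n}\cdot\max\{\sigma\sqrt{n\log n},B\log n\}/\lambda^{\star}$, whereas your eigen-equation projection yields only $O(\|\bm{H}_{\mathsf{d}}\|/\lambda^{\star})$---but since you fold both eigenvector contributions into the single prefactor $\beta$, your weaker bound suffices and the final conclusions are identical.
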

%

Similar to the symmetric rank-1 case, the estimation errors of the estimates ${\bm{u}}$ and ${\bm{v}}$ are well-controlled in any deterministic direction (e.g.~the entrywise errors are well-controlled).  This allows us to complete the theory for the case when $\bm{M}^{\star}$ is a real-valued and rank-1 matrix.

\begin{figure}[htbp!]
	\centering
	\begin{tabular}{ccc}
		\includegraphics[width=0.32\linewidth]{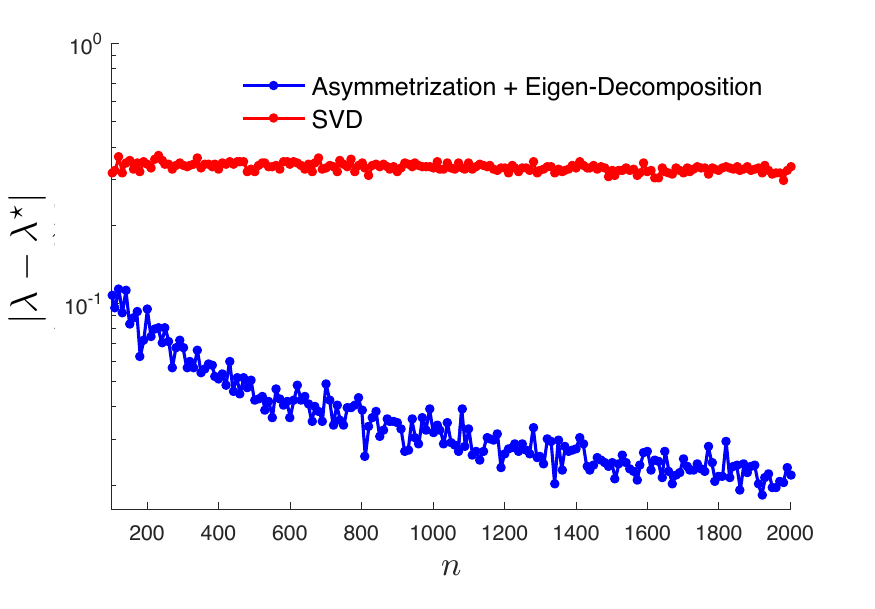}
		& \includegraphics[width=0.32\linewidth]{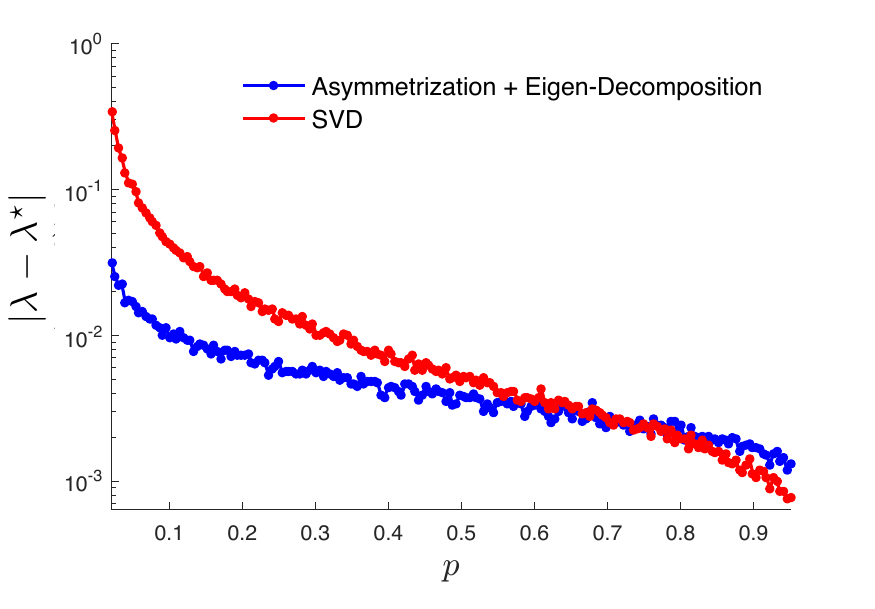}
		& \includegraphics[width=0.32\linewidth]{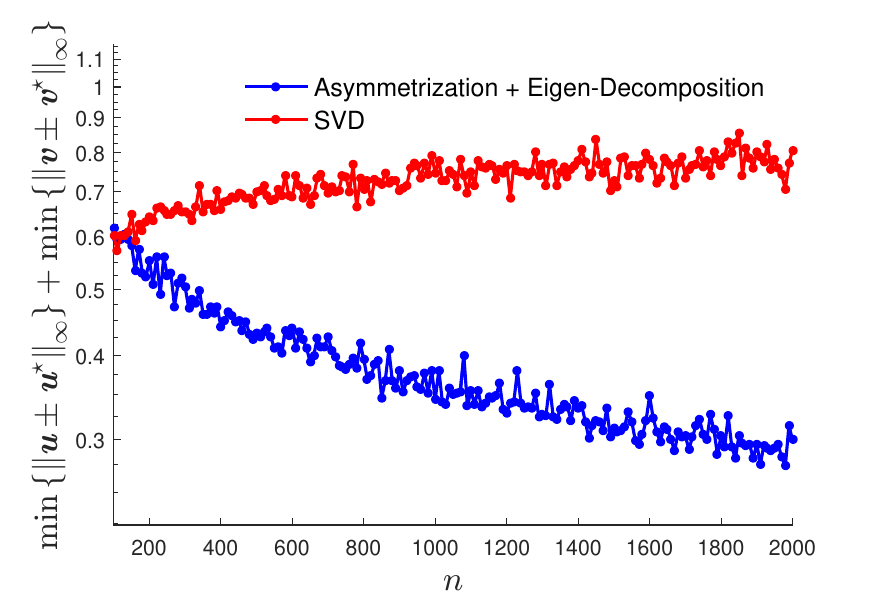}
	\tabularnewline
	(a) eigenvalue perturbation vs.~$n$ & (b) eigenvalue perturbation vs.~$p$ & (c) $\ell_\infty$ eigenvector perturbation \tabularnewline
	\end{tabular}

	\caption{Numerical experiments for rank-1 matrix completion, where $\bm{M}^{\star}=\bm{u}^\star \bm{v}^{\star\top}\in \mathbb{R}^{n_1\times n_2}$ is randomly generated with leading singular value $ \lambda^{\star}=1$. Let $n=n_1=2n_2$. Each entry is observed independently with probability $p$.  (a)  $|\lambda - \lambda^{\star}|$  vs.~$n$ with $p=3\log n\,/\,n$; (b)  $|\lambda -  \lambda^{\star}|$ vs.~$p$ with $n=1000$; (c) $\ell_{\infty}$ eigenvector estimation error vs.~$n$ with $p=3\log n\,/\,n$.
	The blue (resp. red) lines represent the average errors over 100 independent trials using using the eigen-decomposition (resp.~SVD) approach applied to $\bm{M}_{\mathsf{dilation}}$ (resp.~$\bm{M}$).
	\label{fig:mc_rankr_varyn}}
\end{figure}

Further, we conduct numerical experiments for matrix completion when $\bm{M}^{\star}$ is a rank-1 and asymmetric matrix in Fig.~\ref{fig:mc_rankr_varyn}. Here, we suppose that at most 1 sample is observed for  each entry, and
we estimate the singular value and singular vectors of $\bm{M}^{\star}$ via the above-mentioned dilation trick,  coupled with the asymmetrization procedure discussed in Section \ref{sec:asymmetrization-rank1}.
The numerical performance confirms that the proposed technique outperforms vanilla SVD in spectral estimation.

Finally, we remark that  the  asymptotic behavior of the eigenvalues of asymmetric random matrices has been extensively explored in the physics literature  (e.g.~\citep{sommers1988spectrum, khoruzhenko1996large, brezin1998non, chalker1998eigenvector,feinberg1997non,lytova2018delocalization}). Their focus, however, has largely been to pin down the asymptotic density of the eigenvalues, similar to the semi-circle law in the symmetric case. Nevertheless, a sharp perturbation bound for the leading eigenvalue --- particularly for the low-rank case --- is beyond their reach. A few recent papers began to explore the locations of eigenvalue outliers that fall outside the bulk predicted by the circular law \cite{tao2013outliers,rajagopalan2015outlier,benaych2016outliers,bordenave2016outlier}.  The results reported therein either do not focus on obtaining the right convergence rate (e.g.~providing only a bound like $|\lambda - \lambda^\star| = o(|\lambda^{\star}|)$) or are restricted to a special family of ground truth (e.g.~the one with a diagonal block equal to identity) or i.i.d.~noise. As a result, these prior results are insufficient to demonstrate the power and benefits of the eigen-decomposition method in the presence of data asymmetry.

\subsection{Proof of Theorem \ref{thm_Linear_Forms_1_rank_r_A}}


Without loss of generality, we shall assume $\lambda_{\max}^{\star} = \lambda_1^{\star} = 1$ throughout the proof.
To begin with, Lemma \ref{lem:perturbation-BF-rank-r} implies that for all $1\leq l\leq r$,
	\begin{equation}
		\label{eq:lambda-l-LB}
	|\lambda_l| \geq |\lambda_\mathrm{min}^\star| - \left\|\bm{H}\right\| > 1/(2\kappa) > \|\bm{H}\|
	\end{equation}
as long as $\left\|\bm{H}\right\| < 1/(2\kappa)$.
In view of the Neumann trick (Theorem \ref{thm_Neumann}), we can derive
\begin{align}
\Bigg|\bm{a}^{\top}\bm{u}_{l}-\sum_{j=1}^{r}\frac{\lambda_{j}^{\star}\bm{u}_{j}^{\star\top}\bm{u}_{l}}{\lambda_{l}}\bm{a}^{\top}\bm{u}_{j}^{\star}\Bigg| & =\Bigg|\sum_{j=1}^{r}\frac{\lambda_{j}^{\star}}{\lambda_{l}}\big(\bm{u}_{j}^{\star\top}\bm{u}_{l}\big)\Bigg\{\sum_{s=1}^{\infty}\frac{1}{\lambda_{l}^{s}}\bm{a}^{\top}\bm{H}^{s}\bm{u}_{j}^{\star}\Bigg\}\Bigg| \nonumber\\
 & \leq\Bigg(\sum_{j=1}^{r}\frac{\big|\lambda_{j}^{\star}\big|}{|\lambda_{l}|}\big|\bm{u}_{j}^{\star\top}\bm{u}_{l}\big|\Bigg)\Bigg\{\max_{1\leq j\leq r}\sum_{s=1}^{\infty}\frac{1}{|\lambda_{l}|^{s}}\big|\bm{a}^{\top}\bm{H}^{s}\bm{u}_{j}^{\star}\big|\Bigg\} \nonumber\\
 & \leq\sqrt{r\sum_{j=1}^{r}\big|\bm{u}_{j}^{\star\top}\bm{u}_{l}\big|^{2}}\Bigg\{\max_{1\leq j \leq r}\frac{\big|\lambda_{j}^{\star}\big|}{|\lambda_{l}|}\Bigg\}\Bigg\{\max_{1\leq j\leq r}\sum_{s=1}^{\infty}\frac{1}{|\lambda_{l}|^{s}}\big|\bm{a}^{\top}\bm{H}^{s}\bm{u}_{j}^{\star}\big|\Bigg\} \nonumber\\
 & \leq \sqrt{r} \cdot \frac{1}{|\lambda_l|} \cdot \Bigg\{\max_{1\leq j\leq r}\sum_{s=1}^{\infty} \frac{1}{|\lambda_{l}|^{s}} \big|\bm{a}^{\top}\bm{H}^{s}\bm{u}_{j}^{\star}\big|\Bigg\},  \label{eq:rank-r-UB1}
\end{align}
where the third line follows since $\sum_{j=1}^{r}\big|\bm{u}_{j}^{\star\top}\bm{u}_{l}\big|^{2} \leq \|\bm{u}_l\|_2^2 = 1$, and the last inequality makes use of \eqref{eq:lambda-l-LB}. Apply Corollary \ref{cor_high_order_terms} to reach
\begin{align}
	\eqref{eq:rank-r-UB1}
	&\leq \frac{\sqrt{r}}{|\lambda_l|}\sum_{s=1}^{\infty}\left(2c_{2}\kappa\max\left\{ B\log n,\sqrt{n\sigma^{2}\log n}\right\} \right)^{s}\sqrt{\frac{\mu}{n}}
\nonumber\\
	&\lesssim \frac{\kappa}{|\lambda_l|}
\max\left\{ B\log n,\sqrt{n\sigma^{2}\log n}\right\}
\sqrt{\frac{\mu r}{n}}  \nonumber \\
	&\lesssim \kappa^2
	\max\left\{ B\log n,\sqrt{n\sigma^{2}\log n}\right\}
	\sqrt{\frac{\mu r}{n}},  \nonumber
\end{align}
with the proviso that $|\lambda_l| > 1/(2\kappa)$ and $\max\left\{ B\log n,\sqrt{n\sigma^{2}\log n}\right\} \leq c_{1}/\kappa$ for some sufficiently small constant $c_1>0$. The condition $|\lambda_l| > 1/(2\kappa)$ follows immediately by combining Lemma \ref{lem:perturbation-BF-rank-r}, Lemma \ref{lem:H-norm} and the condition \eqref{Bsigma_cond_3}.

	\section{Discussions}

In this paper, we demonstrate the remarkable advantage of eigen-decomposition over SVD  in the presence of asymmetric noise matrices. This is in stark contrast to conventional wisdom, which is generally not in favor of eigen-decomposition for asymmetric matrices. Our results only reflect the tip of an iceberg, and there are many outstanding issues left answered. We conclude the paper with a few future directions.

\medskip
\noindent {\bf Sharper eigenvalue perturbation bounds for the rank-$r$ case.}
Our current results in Section \ref{sec:main-rank-r} provide an eigenvalue perturbation bound on the order of $r/\sqrt{n}$, assuming  the truth is rank-$r$. However, numerical experiments suggest that the dependency on $r$ might be improvable.  It would be interesting to see whether further theoretical refinement is possible, e.g.~whether it is possible to improve it to $O(\sqrt{r/n})$.

\medskip
\noindent {\bf Eigenvector perturbation bounds for the rank-$r$ case.}  As mentioned before, the current theory falls short of providing eigenvector perturbation bounds for the general rank-$r$ case.  The main difficulty lies in the lack of orthogonality of the eigenvectors of the observed matrix $\bm{M}$.  Nevertheless, when the size of the noise is not too large, it is possible to establish certain near-orthogonality of the eigenvectors, which might in turn lead to sharp control of eigenvector perturbation.

\medskip
\noindent {\bf A challenging signal-to-noise ratio regime}. Take the rank-1 case for example: the present work focuses on the regime where $\|\bm{H}\|\lesssim \| \bm{M}^{\star} \| / \sqrt{\log n} $, and it is known that spectral methods fail to yield reliable estimation if  $\|\bm{H}\|\gg \| \bm{M}^{\star} \| $. There is, however, a ``gray'' region (which includes, for example, the case with $\|\bm{H}\| \approx \| \bm{M}^{\star} \| $) that has not been addressed.   Developing non-asymptotic yet informative perturbation bounds for this regime is likely very challenging and requires new analysis techniques,  which we leave for future investigation.


\medskip
\noindent {\bf Correlated noise.} The current theoretical development relies heavily on the assumption that the noise matrix $\bm{H}$ contains independent random entries. There is no shortage of examples where the noise matrix is asymmetric but is not composed of independent entries.  For instance, in blind deconvolution \cite{li2018rapid}, the noise matrix is a sum of independent asymmetric matrices.  Can we develop eigenvalue perturbation theory for this class of noise?

\medskip
\noindent {\bf Statistical inference of eigenvalues and eigenvectors.} In various applications like network analysis and inference,  one might be interested in determining the (asymptotic) eigenvalue and eigenvector distributions  of  a random data matrix,  in order to produce valid confidence intervals \cite{johnstone2001distribution,bai2008central,cai2017limiting,cape2018signal,xia2018confidence,bao2018singular,chen2019inference}. Can we use the current framework to characterize the distributions of the leading eigenvalues as well as certain linear forms of the eigenvectors of $\bm{M}$ when the noise matrix is non-symmetric?

	\begin{figure}
		\centering
		\includegraphics[width=0.4\linewidth]{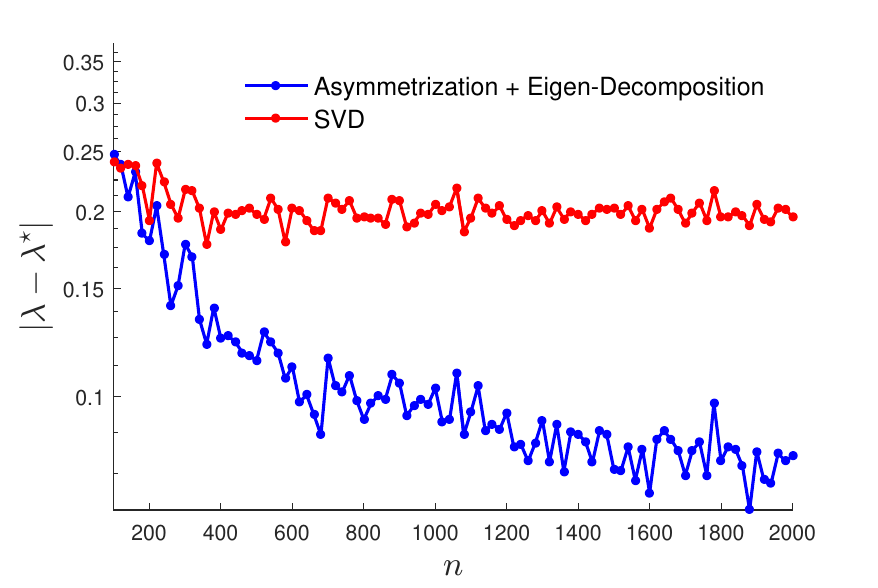}
		\caption{Numerical experiments for the spiked covariance model, where the sample vectors are zero-mean  Gaussian vectors with covariance matrix $\bm{\Sigma}^{\star}$ (cf.~\eqref{eq:true-covariance}).  We plot $|\lambda-\lambda^{\star}|$ vs.~$n$ with $d=n\,/\,10$.
		The blue (resp.~red) lines represent the average  errors over 100 independent trials when $\lambda$ is the leading eigenvalue of  $\widehat{\bm{\Sigma}}_{\mathsf{asym}}$ (resp.~$\hat{\bm{\Sigma}}$).
		\label{fig:covariance_asymmetrization}}
	\end{figure}

\medskip
\noindent {\bf Asymmetrization for other applications.} Given the abundant applications of spectral estimation,  our findings are likely to be useful for other matrix eigenvalue problems and might extend to the tensor case \cite{zhang2018tensor,cai2019subspace}.  Here, we conclude the paper with an example in {\em covariance estimation}
\citep{baik2006eigenvalues,fan2018eigenvector}. Imagine that we observe a collection of $n$ independent Gaussian vectors $\bm{X}_1, \cdots, \bm{X}_n\in \mathbb{R}^d$, which have mean zero and  covariance matrix
\begin{equation}
	\label{eq:true-covariance}
	\bm{\Sigma}^{\star} = \bm{v} \bm{v}^{\top} + \bm{I}_d
\end{equation}
with $\bm{v}$ being a unit vector. This falls under the category of the spiked covariance model \citep{johnstone2009consistency}.  One strategy to estimate the spectral norm $\lambda^{\star}=2$ of $\bm{\Sigma}^{\star}$ is to look at the spectrum of the sample covariance matrix $\hat{\bm{\Sigma}}=\frac{1}{n} \sum_{i=1}^n \bm{X}_i\bm{X}_i^{\top}$. Motivated by the results of this paper, we propose an alternative strategy by looking at the following  asymmetrized sample covariance matrix
\begin{equation}
	\label{eq:asym-covariance-matrix}
	\widehat{\bm{\Sigma}}_{\mathsf{asym}} = \frac{2}{n} \Big(  \sum\nolimits_{i=1}^{n/2} \mathsf{Upper}\left(\bm{X}_i \bm{X}_i^\top\right) +  \sum\nolimits_{i=n/2+1}^n \mathsf{Lower}\left(\bm{X}_i \bm{X}_i^\top\right)  \Big),
\end{equation}
where $\mathsf{Upper}(\cdot)$ (resp.~$\mathsf{Lower}(\cdot)$) extracts out the upper (resp.~lower) triangular part of the matrix, including (resp.~excluding) the diagonal entries.  As can be seen from Fig.~\ref{fig:covariance_asymmetrization}, the largest eigenvalue of the asymmetrized $\widehat{\bm{\Sigma}}_{\mathsf{asym}}$ is much closer to the true spectral norm of $\bm{\Sigma}^{\star}$, compared to the largest singular value of the sample covariance matrix $\hat{\bm{\Sigma}}$.   We leave the theoretical understanding of such findings to future investigation.

	\section*{Acknowledgment}

	Y.~Chen is supported in part by the AFOSR YIP award FA9550-19-1-0030, by the ARO  grant W911NF-18-1-0303, by the ONR grant N00014-19-1-2120, by the NSF grants 
	CCF-1907661 and IIS-1900140,  and by the Princeton SEAS innovation award.
	J.~Fan is supported in part by the NSF grants DMS-1662139 and DMS-1712591, by the ONR grant N00014-19-1-2120, and by the NIH grant 2R01-GM072611-13. 
	C.~Cheng is supported in part by the Elite Undergraduate Training Program of School of Mathematical Sciences in Peking University, and by the William R.~Hewlett Stanford graduate fellowship. 
	We thank Cong Ma for helpful discussions, and Zhou Fan for telling us an example of asymmetrizing the Gaussian matrix.

\bibliographystyle{ims}
\bibliography{bibfile_asymmetric}

	\newpage

	\appendix
	
	\section{Proofs for preliminary facts}
\label{sec:proof-preliminary}

\subsection{Proof of Lemma \ref{lem:perturbation-BF-rank-r}}
	\label{appendix:proof-lem:perturbation-BF}

	Without loss of generality,  suppose the leading eigenvalue of $\bm{M}^{\star}$ is 1. 
	
	(1) We start by proving the rank-1 case,  in which the eigenvalues of $\bm{M}^\star$ are either 0 or 1.   Theorem \ref{thm_BauerFike} immediately implies that 
	\begin{equation} 
	\label{eq:lambda-2disk}
		\Lambda(\bm{M}) ~\subseteq~ \mathcal{B}\left(1, \left\|\bm{H}\right\| \right) \,\cup\, \mathcal{B}\left(0, \left\|\bm{H}\right\| \right) ,
	\end{equation}
	where $\Lambda(\bm{M})$ is the set of eigenvalues of $\bm{M}$, and $\mathcal{B}(z,r) \triangleq \{x\in \mathbb{C}: |x-z|\leq r\}$ denotes a disk of radius $r$ centered at $z$. 
	
	To establish the lemma, our aim is to show that  there is exactly one eigenvalue  of $\bm{M}$ --- denoted by $\lambda$ --- lying in the disk $\mathcal{B}\left(1, \left\|\bm{H}\right\| \right)$, and it has multiplicity 1. If this is true, then the assumption $\left\|\bm{H}\right\| < 1/2$ indicates that $|\lambda| \geq 1- \|\bm{H}\| > \| \bm{H} \| \geq z$ for any $z\in \mathcal{B}(0, \|\bm{H}\|)$, and hence $\lambda$ must be the leading eigenvalue.  Furthermore, since $\bm{M}$ is a real-valued matrix, both $\lambda$ and its complex conjugate $\overline{\lambda}$ are eigenvalues of $\bm{M}$. However, if $\lambda \neq \overline{\lambda}$, then both of these two eigenvalues fall within $\mathcal{B}(1,\|\bm{H}\|)$, resulting in contradiction. As a result, one necessarily has $\lambda= \overline{\lambda}$ and both of them are real-valued. A similar argument demonstrates that the eigenvector $\bm{u}$ associated with $\lambda$ is also real-valued. 
	
	We then justify the existence and uniqueness of an eigenvalue in $\mathcal{B}\left(1, \left\|\bm{H}\right\| \right)$. 
	Denote $\Lambda(\bm{M})=\left\{\lambda_1,  \cdots, \lambda_n\right\}$, and define a set of auxiliary matrices
	$$
	\bm{M}_t = \bm{M}^\star + t \bm{H} , \qquad 0\leq t\leq 1. 
	$$
	Similar to  \eqref{eq:lambda-2disk}, one has
	\begin{equation} 
	\label{eq:lambda-t-2disk}
	\Lambda(\bm{M}_t) ~\subseteq~ \mathcal{B}\left(1, t\left\|\bm{H}\right\| \right) \,\cup\, \mathcal{B}\left(0, t\left\|\bm{H}\right\| \right) .
	\end{equation}
	Recognizing that the set of eigenvalues of $\bm{M}_t$ depends  continuously on $t$ (e.g.~\cite[Theorem 6]{embree2001generalizing}), we can write
	\begin{equation}
	\Lambda(\bm{M}_t) = \left\{\lambda_1(t), \lambda_2(t), \cdots, \lambda_n(t) \right\},
	\end{equation}
	with each $\lambda_j(t), 1 \leq j \leq n$ being a continuous function in $t$.  Meanwhile,  as long as $\left\|\bm{H}\right\| < 1/2$ and $0\leq t\leq 1$, the two disks $\mathcal{B}\left(1, t\left\|\bm{H}\right\| \right)$ and $\mathcal{B}\left(0, t\left\|\bm{H}\right\|\right)$ are always disjoint. Thus, the continuity of the spectrum (w.r.t.~$t$) requires $\lambda_j(t)$ to always stay within the same disk where $\lambda_j(0)\in \{0,1\}$ lies, namely, 
	%
	\begin{equation}
		\lambda_j(t) \in \mathcal{B}(\lambda_j(0), t \left\|\bm{H}\right\|).
	\end{equation}
	Given that $\bm{M}^\star$ (or $\bm{M}_0$) has $n-1$ eigenvalues equal to $0$ and one eigenvalue equal to $1$, we establish the lemma for the rank-1 case.

	(2) We now turn to the rank-$r$ case. Repeating the above argument for the rank-1 case, we can immediately show that: if $\|\bm{H}\|<\lambda_r^{\star}/2$, then (i) there are exactly $n-r$ eigenvalues lying within $\mathcal{B}(0,\|\bm{H}\|)$;  (ii) all other eigenvalues lie within $\cup_{1\leq j\leq r} \mathcal{B}(\lambda_j^{\star},\|\bm{H}\|)$, which are exactly the top-$r$ leading eigenvalues of $\bm{M}$.  This concludes the proof.

\subsection{Proof of Theorem \ref{thm_Neumann}}
	\label{appendix:proof-thm_Neumann}

	From the definition of eigenvectors,
	$$
		\left(\bm{M}^\star + \bm{H}\right) \bm{u}_l = \lambda_l \bm{u}_l, \qquad \text{or equivalently,} \qquad  \frac{1}{\lambda_l}\bm{M}^\star  \bm{u}_l = \Big(  \bm{I} - \frac{1}{\lambda_l} \bm{H} \Big) \bm{u}_l.
	$$
	When $\left\|\bm{H}\right\|_2 < |\lambda_l|$, one can  invert $\bm{I} - \frac{1}{\lambda_l} \bm{H}$ and use the assumption \eqref{eq:M-star-eigen}  to reach
	\begin{align*}
	\bm{u}_l & = \Big(  \bm{I} - \frac{1}{\lambda_l} \bm{H} \Big)^{-1}  \frac{1}{\lambda_l}\bm{M}^\star \bm{u}_l \\
	& = \frac{1}{\lambda_l} \left(\bm{I}-\frac{1}{\lambda_l} \bm{H} \right)^{-1}   \left(\sum\nolimits_{j=1}^r \lambda_j^\star \bm{u}_j^\star \bm{u}_j^{\star \top} \right) \bm{u}_l \\
	& = \sum_{j=1}^r \frac{\lambda_j^\star}{\lambda_l} \big(\bm{u}_j^{\star \top} \bm{u}_l\big) \left(\bm{I}-\frac{1}{\lambda_l}\bm{H}\right)^{-1} \bm{u}_j^\star ,
	\end{align*}
		where the last line follows by rearranging terms. Finally, replacing $\big(\bm{I}-\frac{1}{\lambda_l}\bm{H}\big)^{-1}$ with the Neumann series $\sum_{s=0}^{\infty} \frac{1}{\lambda_l^s} \bm{H}^s$, we establish the theorem.

\subsection{Proof of Lemma \ref{lem_l2_convergence-rank-r}}
	\label{appendix:proof-lem_l2_convergence}
		
	(1) We start with the rank-1 case.  Towards this, we resort to the Neumann trick in Theorem \ref{thm_Neumann}, which in the rank-1 case reads
	\begin{equation}
		\label{eq:u-rank1-expand}
		\bm{u} = \frac{1}{\lambda}\left(\bm{u}^{\star \top} \bm{u}\right)\sum_{s=0}^{\infty} \left(\frac{1}{\lambda}\bm{H}\right)^s \bm{u}^\star .
	\end{equation}
	From Lemma \ref{lem:perturbation-BF-rank-r}, we know that $\lambda$ is real-valued and that $\lambda > 1 - \|\bm{H}\| \geq 3/4 > \|\bm{H} \|$ under our assumption. This together with \eqref{eq:u-rank1-expand} yields
	\begin{align}
	\left\|\bm{u}-\frac{\bm{u}^{\star\top}\bm{u}}{\lambda}\bm{u}^\star\right\|_2 \leq \frac{1}{\lambda} \sum_{s=1}^{\infty} \left\| \frac{1}{\lambda} \bm{H}   \right\|^s \|\bm{u}^{\star} \|_2 = 
\frac{1}{\lambda} \sum_{s=1}^{\infty} \left\| \frac{1}{\lambda} \bm{H}   \right\|^s
	 = \frac{\left\|\bm{H}\right\|}{\lambda\left(\lambda-\left\|\bm{H}\right\|\right)} 
	 \leq \frac{8}{3} \left\|\bm{H}\right\| ,  \label{eq:u-UB1}
	\end{align}
	where the last inequality holds since $\lambda \geq 3/4$ and $\lambda -\|\bm{H}\| \geq 1-2\|\bm{H}\|\geq 1/2$. 

	Next, by  decomposing $\bm{u}$ into two orthogonal components 
	$\bm{u} = (\bm{u}^{\star\top}\bm{u})\bm{u}^\star +  (\bm{u}- (\bm{u}^{\star\top}\bm{u} )\bm{u}^\star ) $,  
	we  obtain 
	\begin{align}
	\left|\bm{u}^{\star\top}\bm{u}\right| & = \left\|\left(\bm{u}^{\star\top}\bm{u}\right)\bm{u}^\star\right\|_2 
	 = \sqrt{1 - \left\|\bm{u}-\left(\bm{u}^{\star\top}\bm{u}\right)\bm{u}^\star\right\|_2^2}  \nonumber \\
	& \geq 1 - \left\|\bm{u}-\left(\bm{u}^{\star\top}\bm{u}\right)\bm{u}^\star\right\|_2^2 \nonumber\\
	& \geq 1 - \left\|\bm{u}-\frac{\bm{u}^{\star\top}\bm{u}}{\lambda}\bm{u}^\star\right\|_2^2 \label{eq:u_LB1} \\
	& \geq 1 - \frac{64}{9} \left\|\bm{H}\right\|^2 . \label{eq:u_LB2}
	\end{align}
	The inequality \eqref{eq:u_LB1} holds since $\left(\bm{u}^{\star\top}\bm{u}\right)\bm{u}^\star$ is orthogonal projection of $\bm{u}$ onto the subspace spanned by $\bm{u}^{\star}$, and hence $\|\bm{u} - \left(\bm{u}^{\star\top}\bm{u}\right)\bm{u}^\star \|_2 \leq  \|\bm{u} - \frac{1}{\lambda}\left(\bm{u}^{\star\top}\bm{u}\right)\bm{u}^\star \|_2$. 

	Finally, \eqref{eq:u_LB2} together with the fact that $\bm{u}$ is real-valued (cf.~Lemma \ref{lem:perturbation-BF-rank-r}) leads to the advertised bound:
	\begin{align*}
		\min\{\|\bm{u}-\bm{u}^{\star}\|_{2},\|\bm{u}+\bm{u}^{\star}\|_{2}\}  =\sqrt{\|\bm{u}\|_{2}^{2}+\|\bm{u}^{\star}\|_{2}^{2}-2\big|\bm{u}^{\star\top}\bm{u}\big|}  \leq\frac{8\sqrt{2}}{3}\|\bm{H}\|. 
	\end{align*}

	(2) For the rank-$r$ case, it is seen that for any $1\leq l\leq r$,
	\begin{align}
	\sum_{j=1}^r |\bm{u}_j^{\star \top} \bm{u}_l|^2 & = \left\|\sum\nolimits_{j=1}^r \big(\bm{u}_j^{\star \top} \bm{u}_l\big) \bm{u}_j^\star\right\|_2^2 
	=  1 - \left\|\bm{u}_l-\sum\nolimits_{j=1}^r \big(\bm{u}_j^{\star \top} \bm{u}_l\big) \bm{u}_j^\star\right\|_2^2 \nonumber\\
	& \geq  1 - \Bigg \|\bm{u}_l-\sum_{j=1}^r \frac{\lambda_j^\star\bm{u}_j^{\star \top} \bm{u}_l}{\lambda_l} \bm{u}_j^\star \Bigg\|_2^2 ,
	\label{eq:LB-prelim-1}
	\end{align}
	where the inequality arises since $\sum\nolimits_{j=1}^r \big(\bm{u}_j^{\star \top} \bm{u}_l\big) \bm{u}_j^\star$ is the Euclidean projection of $\bm{u}_l$ onto the span of $\{\bm{u}_1^{\star},\cdots,\bm{u}_r^{\star}\}$.  In addition, observe that 
	\begin{align*}
		\left\Vert \sum\nolimits _{j=1}^{r}\lambda_{j}^{\star}\big(\bm{u}_{j}^{\star\top}\bm{u}_{l}\big)\bm{u}_{j}^{\star}\right\Vert _{2} 
		\leq \sqrt{\sum\nolimits _{j} \big(\lambda_{j}^{\star}\big)^{2}  \big| \bm{u}_{j}^{\star\top}\bm{u}_{l} \big|^{2}}
		\leq\lambda_{\mathrm{max}}^{\star}\sqrt{\sum\nolimits _{j}\big| \bm{u}_{j}^{\star\top}\bm{u}_{l} \big|^{2}} = \lambda_{\mathrm{max}}^{\star}\|\bm{u}_{l}\|_{2} = 1. 		
	\end{align*}
	This taken collectively with Theorem \ref{thm_Neumann} leads to 
	\begin{align*}
		\Bigg\Vert \bm{u}_{l}-\sum_{j=1}^{r}\frac{\lambda_{j}^{\star}\bm{u}_{j}^{\star\top}\bm{u}_{l}}{\lambda_{l}}\bm{u}_{j}^{\star} \Bigg\Vert _{2} 
		& = \Bigg\Vert \sum_{j=1}^{r} \frac{\lambda_{j}^{\star}}{\lambda_{l}}\big(\bm{u}_{j}^{\star\top}\bm{u}_{l}\big)\left\{ \sum_{s=1}^{\infty}\frac{1}{\lambda_{l}^{s}}\bm{H}^{s}\bm{u}_{j}^{\star}\right\} \Bigg\Vert _{2}\\
		& = \Bigg\Vert \frac{1}{\lambda_{l}}\sum_{s=1}^{\infty}\frac{1}{\lambda_{l}^{s}}\bm{H}^{s}\Bigg\{\sum_{j=1}^r\lambda_{j}^{\star} \big(\bm{u}_{j}^{\star\top}\bm{u}_{l}\big) \bm{u}_{j}^{\star} \Bigg\} \Bigg\Vert _{2}\\
 		& \leq\frac{1}{|\lambda_{l}|}|\sum_{s=1}^{\infty}\frac{1}{|\lambda_{l}|^{s}}\|\bm{H}\|^{s} \Bigg\|\sum_{j=1}^r \lambda_j^\star\left(\bm{u}_j^{\star \top} \bm{u}_l\right)\bm{u}_j^\star\Bigg\|_2\\
 		& \leq \frac{1}{|\lambda_{l}|}\cdot\frac{\|\bm{H}\|}{|\lambda_{l}|-\|\bm{H}\|} \\
 		& \leq\frac{8\kappa^2}{3}\|\bm{H}\|.
	\end{align*}
	The last two lines follow since, when  $\left\|\bm{H}\right\| < 1/(4\kappa) $, one can invoke Lemma \ref{lem:perturbation-BF-rank-r} to show  (i) $|\lambda_l| > |\lambda_\mathrm{min}^{\star}| - \|\bm{H}\| \geq 3/(4\kappa)$ and (ii) $|\lambda_l| - \left\|\bm{H}\right\| \geq |\lambda_\mathrm{min}^{\star}| - 2 \left\|\bm{H}\right\| \geq 1/(2\kappa)$.     Substitution into \eqref{eq:LB-prelim-1} yields
	\begin{align*}
	\eqref{eq:LB-prelim-1}
	 \geq 1 - \frac{64\kappa^4}{9}\left\|\bm{H}\right\|^2
	\end{align*}
	as claimed.

	\section{Proof for the lower bound in Lemma \ref{lem:lower-bound}}
\label{sec:proof-lower-bound}

Here, we establish Lemma \ref{lem:lower-bound}, towards which we intend to invoke Fano's inequality. Let $P_{ij}$ and $\tilde{P}_{ij}$ represent the distributions
of $M_{ij}$ and $\widetilde{M}_{ij}$, respectively, and denote by
$P$, $\tilde{P}$, $\hat{P}$ the distributions of $\bm{M}$, $\widetilde{\bm{M}}$
and $\widehat{\bm{M}}$, respectively. Then the KL divergence between
$P$ and $\tilde{P}$ can be calculated as
\[
\mathsf{KL}\big(\tilde{P}\,\|\,P\big)=\sum_{1\leq i,j\leq n}\mathsf{KL}\big(\tilde{P}_{ij}\,\|\,P_{ij}\big)=\sum_{1\leq i,j\leq n}\frac{(\Delta u_{i}^{\star}u_{j}^{\star})^{2}}{2\sigma^{2}}=\frac{\Delta^{2}}{\sigma^{2}}.
\]
where the first identity comes from the property of KL divergence
	for product measures, the second identity follows since $\mathsf{KL}(\mathcal{N}(\mu_{1},\sigma^{2})\,\|\ \mathcal{N}(\mu_{2},\sigma^{2}))=\frac{(\mu_{1}-\mu_{2})^{2}}{2\sigma^{2}}$, and the third one holds since $\|\bm{u}^{\star}\|_2=1$.
The same argument yields $\mathsf{KL}\big(\hat{P}\,\|\,P\big)=\Delta^{2}/\sigma^{2}$.
In view of \cite[Corollary 2.6]{tsybakov2009introduction}, if 
\[
\alpha\log2\geq\frac{1}{2}\left\{ \mathsf{KL}\big(\tilde{P}\,\|\,P\big)+\mathsf{KL}\big(\hat{P}\,\|\,P\big)\right\} =\frac{\Delta^{2}}{\sigma^{2}},
\]
then one has $p_{\mathrm{e}}\geq\frac{\log3-\log2}{\log2}-\alpha$.
Taking $\alpha=\log_{2}1.5-\varepsilon$ immediately establishes the
proof.

	\section{Proof of Lemma \ref{lem_high_order_terms}} 
\label{app_lem2}

	To establish this lemma, we exploit entrywise independence of $\bm{H}$ and develop a combinatorial trick.


        To begin with, we expand the quantity of interest as
	\begin{align*}
		\left(\bm{a}^\top \bm{H}^s \bm{u}^\star\right)^k & = \sum_{1\leq i_t^{(b)} \leq n, \,0 \leq t \leq s, \, 1\leq b \leq k} \prod_{b=1}^ka_{i_0^{(b)}} \left(\prod_{t=1}^s H_{i_{t-1}^{(b)}i_t^{(b)}}\right) u_{i_s^{(b)}}^\star .  
	\end{align*}
	 Throughout this section, we will use
	\begin{equation}
		\label{eq:defn:I}
		\mathcal{I} := \left\{i_t^{(b)} \mid 0\leq t \leq s , 1 \leq b \leq k\right\} ~\in~ [n]^{(s+1)k} 
	\end{equation}
	to denote such a collection of $(s+1)k$ indices. Thus, one can write
	\begin{align*}
	\mathbb{E}\left[ \left(\bm{a}^\top \bm{H}^s \bm{u}^\star\right)^k\right] 
		& = \sum_{\mathcal{I} \in [n]^{(s+1)k}}  \mathbb{E} \left[\prod_{b=1}^k  a_{i_0^{(b)}} \left(\prod_{t=1}^s H_{i_{t-1}^{(b)}i_t^{(b)}}\right) u_{i_s^{(b)}}^\star\right] .
	\end{align*}
	We shall often think of this sum-product graphically by viewing each index pair $\big(i_{t-1}^{(b)}, i_{t}^{(b)}\big)$ as a directed edge over the vertex set $[n]^{(s+1)k}$. As a result, this gives us a set of $sk$ edges in total $\{\bm{e}_t^{(b)}\mid 1\leq t\leq s, 1\leq b \leq k\}$, where $\bm{e}_t^{(b)}$ represents  $\big(i_{t-1}^{(b)}, i_{t}^{(b)}\big)$. 
	See Fig.~\ref{fig:edges-indices} for an illustration. In what follows, two edges $\bm{e}_{t_1}^{(b_1)}$ and $\bm{e}_{t_2}^{(b_2)}$ are said to be equivalent in $\mathcal{I}$, denoted by $$\bm{e}_{t_1}^{(b_1)}=\bm{e}_{t_2}^{(b_2)},$$ if the values of the corresponding vertices are identical (namely, $i_{t_1-1}^{(b_1)}=i_{t_2-1}^{(b_2)}$ and  $i_{t_1}^{(b_1)}=i_{t_2}^{(b_2)}$). 

	\begin{figure}
		\begin{center}
		\begin{tabular}{ccc}
			
			\includegraphics[width=0.4\textwidth]{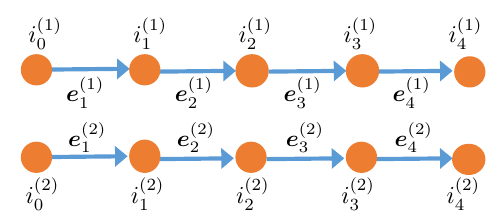} &	\qquad  & \includegraphics[width=0.4\textwidth]{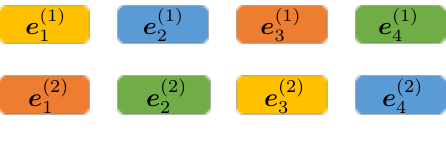} \tabularnewline
		\end{tabular}
		\end{center}
		\vspace{-1em}
		\caption{ (Left) An illustration of $\mathcal{I}:=\big\{i_t^{(b)} \mid 0\leq t \leq s , 1 \leq b \leq k\big\} $ and the corresponding edges $\{\bm{e}_t^{(b)}\mid 1 \leq t \leq s, 1\leq b \leq k\}$ when $s=4$ and $k=2$. (Right) An illustration of a type (or a partition of $sk$ edges), where the edges of the same color belong to the same subset of the partition. \label{fig:edges-indices} }
	\end{figure}

	When $(s+1)k \ll n$, most of the summands in the above expansion vanish.  In fact, for any summand associated with a given $\mathcal{I}$: as long as there exists a distinct edge $\big(i_{t^*-1}^{(b^*)}, i_{t^*}^{(b^*)}\big)$ (i.e.~not equal to any other edge associated with $\mathcal{I}$), then the contribution of this summand is zero, namely, 
	\begin{align}
	\mathbb{E} \left[\prod_{b=1}^k a_{i_0^{(b)}} \left(\prod_{t=1}^s H_{i_{t-1}^{(b)}i_t^{(b)}}\right) u_{i_s^{(b)}}^\star\right] & = \left(\prod_{b=1}^k a_{i_0^{(b)}}u_{i_s^{(b)}}^\star\right) \mathbb{E} \left[H_{i_{t^*-1}^{(b^*)}i_{t^*}^{(b^*)}}\right]  \mathbb{E} \left[\prod_{(t,b)\in[s]\times[k],\,(t,b)\neq (t^*,b^*)} H_{i_{t-1}^{(b)}i_t^{(b)}}\right] \nonumber\\
	& = 0.  \label{eq:singleton}
	\end{align}
	As a result, for any term with non-zero contribution, every edge $\big(i_{t-1}^{(b)}, i_{t}^{(b)} \big)$ must appear at least twice.


	To enable simple yet effective upper bounds on the non-vanishing terms, we group those terms of the same ``type'' and look at each ``type'' separately. Specifically: 
	\begin{itemize}
		\item We represent a collection of edges $\big\{ (i_{t-1}^{(b)}, i_t^{(b)}) \mid 1\leq t\leq s, 1\leq b \leq k \big\}$ as $\big\{ \bm{e}_t^{(b)} \mid 1\leq t\leq s, 1\leq b \leq k \big\}$. See Fig.~\ref{fig:edges-indices} for an illustration. 
		\item Each {\em type} encodes a set of constraints across edges, namely, which edges correspond to the same pair of vertices. To be precise, we define each type to be   
			a partition of $\{ \bm{e}_t^{(b)} \mid 1\leq t\leq s, 1\leq b \leq k\}$ into a disjoint union of subsets, so that all edges falling within the same subset are equivalent.  For instance, when $s=k=2$, one possible type is $\big\{ \{\bm{e}_1^{(1)}, \bm{e}_2^{(2)}\}, \{\bm{e}_1^{(2)}, \bm{e}_2^{(1)}\}    \big\}$, which encodes the constraints $\bm{e}_1^{(1)}=\bm{e}_2^{(2)}$ and $\bm{e}_1^{(2)}=\bm{e}_2^{(1)}$. 

		\item For each index collection $\mathcal{I}$ (defined in \eqref{eq:defn:I}), we write $\mathsf{type}(\mathcal{I})= \mathcal{T}$ if the associated edge set of $\mathcal{I}$ satisfies the constraints encoded by a type $\mathcal{T}$.  

		\item Define the set $\Gamma^{+}$ as 
			\begin{equation}
				\label{defn:Gamma+}
				\Gamma^{+}:~ \text{the set of all types s.t.~each subset of the associated partition has cardinality at least 2}.
			\end{equation}
	\end{itemize}	

	\noindent With this grouping strategy and \eqref{eq:singleton}  in mind, we can derive
	\begin{align}
	 \left|\mathbb{E}\left[\left(\bm{a}^\top \bm{H}^s \bm{u}^\star\right)^k\right]\right| 
	= & \left|\sum_{\mathcal{I} \in [n]^{(s+1)k}}  \mathbb{E} \left[\prod_{b=1}^ka_{i_0^{(b)}} \left(\prod_{t=1}^s H_{i_{t-1}^{(b)}i_t^{(b)}}\right) u_{i_s^{(b)}}^\star\right] \right| \nonumber\\
	= & \left| \sum_{\mathcal{T} \in \Gamma^{+}} ~ \sum_{\mathcal{I} \in [n]^{(s+1)k}: \, \mathsf{type}(\mathcal{I}) = \mathcal{T}} \mathbb{E} \left[\prod_{b=1}^ka_{i_0^{(b)}} \left(\prod_{t=1}^s H_{i_{t-1}^{(b)}i_t^{(b)}}\right) u_{i_s^{(b)}}^\star\right] \right|, \label{eq:expectation-1}
	\end{align}
	where the last line follows since all types outside $\Gamma^{+}$ have vanishing contributions (cf.~\eqref{eq:singleton}). To bound the right-hand side of \eqref{eq:expectation-1}, we need the following lemma. Here and throughout, $|\mathcal{T}|$ represents the number of non-empty subsets in the partition associated with $\mathcal{T}$. 
	\begin{lem} 
	\label{lem_multiple_partition}
	For any fixed unit vector $\bm{a} \in \mathbb{R}^{n \times 1}$ and any type $\mathcal{T} \in \Gamma^+$, one has
	\begin{equation}
	\left| \sum_{\mathcal{I} \in [n]^{(s+1)k}: \, \mathsf{type}(\mathcal{I}) = \mathcal{T}} \mathbb{E} \left[ \prod_{b=1}^k a_{i_0^{(b)}} \left(\prod_{t=1}^s H_{i_{t-1}^{(b)}i_t^{(b)}}\right) u_{i_s^{(b)}}^\star \right] \right| \leq B^{sk} \left(\frac{\sigma^2}{B^2}\right)^{|\mathcal{T}|} \left(\sqrt\frac{\mu}{n}\right)^k n^{|\mathcal{T}|} .
	\end{equation}

	\end{lem}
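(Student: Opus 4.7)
The plan is to exploit the partition structure encoded by $\mathcal{T}$ to factorize the random expectation along the groups of edges, and then reduce the remaining deterministic quantity to a combinatorial counting argument.

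First I would pull the expectation inside the sum and factorize it. For any $\mathcal{I}$ with $\mathsf{type}(\mathcal{I}) = \mathcal{T}$, every edge in a single group $G \in \mathcal{T}$ shares the same pair of vertex values $(\alpha_G, \beta_G)$, so that
\begin{equation*}
\mathbb{E}\left[\prod_{t,b} H_{i_{t-1}^{(b)} i_t^{(b)}}\right] \;=\; \prod_{G \in \mathcal{T}} \mathbb{E}\bigl[H_{\alpha_G \beta_G}^{|G|}\bigr],
\end{equation*}
because distinct groups correspond to distinct (and hence independent) entries of $\bm{H}$. Since $\mathcal{T} \in \Gamma^{+}$ forces $|G| \geq 2$, Assumption \ref{assumption:H} yields the moment bound $|\mathbb{E}[H_{ij}^{|G|}]| \leq B^{|G|-2}\,\mathbb{E}[H_{ij}^2] \leq B^{|G|-2}\sigma^2$; multiplying across all $|\mathcal{T}|$ groups and using $\sum_G |G| = sk$ gives
\begin{equation*}
\left|\mathbb{E}\left[\prod_{t,b} H_{i_{t-1}^{(b)} i_t^{(b)}}\right]\right| \;\leq\; B^{sk - 2|\mathcal{T}|}\sigma^{2|\mathcal{T}|} \;=\; B^{sk}\bigl(\sigma^{2}/B^{2}\bigr)^{|\mathcal{T}|}.
\end{equation*}
Crucially, this bound is uniform in $\mathcal{I}$, depending only on the group sizes of $\mathcal{T}$.

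It then suffices to control the purely deterministic quantity
\begin{equation*}
S \;:=\; \sum_{\mathcal{I}\,:\,\mathsf{type}(\mathcal{I}) = \mathcal{T}} \prod_{b=1}^{k} |a_{i_{0}^{(b)}}|\cdot|u_{i_{s}^{(b)}}^{\star}| \;\leq\; \bigl(\mu/n\bigr)^{k/2}\, n^{|\mathcal{T}|}.
\end{equation*}
I would parameterize $\mathcal{I}$ by the common value $x_{v}\in[n]$ of each distinct vertex equivalence class $v$ induced by $\mathcal{T}$, and let $d_{v}^{a}$ (resp.\ $d_{v}^{u}$) denote the number of starting vertices $i_{0}^{(b)}$ (resp.\ ending vertices $i_{s}^{(b)}$) assigned to class $v$. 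Then $S$ factorizes as a product over classes, $S = \prod_{v}\sum_{x_{v}} |a_{x_{v}}|^{d_{v}^{a}}|u_{x_{v}}^{\star}|^{d_{v}^{u}}$, and each per-class factor is bounded using a judicious mix of $\|\bm{a}\|_{\infty}\leq\|\bm{a}\|_{2}=1$, $\|\bm{u}^{\star}\|_{\infty}\leq\sqrt{\mu/n}$, $\|\bm{u}^{\star}\|_{2}=1$, and the Cauchy--Schwarz inequality: a purely internal class contributes $n$, a class touched by a single starting or ending vertex contributes at most $\sqrt{n}$ via Cauchy--Schwarz, and a class hit by two or more special endpoints contributes at most $1$ after extracting the appropriate powers of $\sqrt{\mu/n}$ from the $u^{\star}$ factors (which collectively account for the required $(\mu/n)^{k/2}$ budget).

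The final and most delicate ingredient is a graph-theoretic accounting step. The distinct vertex classes and edge groups of $\mathcal{T}$ form a multigraph with $V$ vertices, $|\mathcal{T}|$ edges, and some number $c$ of connected components, which satisfies the Euler-type inequality $V \leq |\mathcal{T}| + c$; moreover, since each walk lies entirely inside one component and contributes its starting vertex to some class there, $c$ is bounded by the number of classes containing starting vertices. The main obstacle is verifying that, once all the $\sqrt{\mu/n}$-savings and $\sqrt{n}$-Cauchy-Schwarz savings described above are collected, the resulting exponent of $n$ equals exactly $|\mathcal{T}|$ and that of $\sqrt{\mu/n}$ equals exactly $k$, i.e.\ the crude slack $n^{c}$ from the Euler inequality is precisely absorbed by the savings at the weighted vertex classes. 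This bookkeeping relies crucially on $\mathcal{T}\in\Gamma^{+}$ (each group has size $\geq 2$), which prevents any single lonely edge from escaping the counting.
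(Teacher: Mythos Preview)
Your treatment of the random part is correct and matches the paper exactly: independence across edge groups plus the moment bound $|\mathbb{E}[H_{ij}^{|G|}]|\le\sigma^{2}B^{|G|-2}$ gives the factor $B^{sk}(\sigma^{2}/B^{2})^{|\mathcal{T}|}$.

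For the deterministic sum, however, there is a genuine gap. First, the paper takes a much simpler route than you propose: it uses $\|\bm{u}^{\star}\|_{\infty}\le\sqrt{\mu/n}$ to pull out $(\mu/n)^{k/2}$ \emph{immediately}, reducing everything to
\[
\sum_{\mathcal{I}:\,\mathsf{type}(\mathcal{I})=\mathcal{T}}\ \prod_{b=1}^{k}\big|a_{i_{0}^{(b)}}\big|\ \le\ n^{|\mathcal{T}|}.
\]
No mixing of $\ell_{2}$ and $\ell_{\infty}$ bounds on $\bm{u}^{\star}$ is needed, and your per-class case analysis (which, as written, does not consistently deliver the full $(\mu/n)^{k/2}$ budget) becomes unnecessary.

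More importantly, the Euler-type argument you sketch does not close. With $V_{0},V_{1},V_{2}$ the numbers of vertex classes having $0$, $1$, or $\ge 2$ starting vertices, your bound after factorization is $n^{\,V_{0}+V_{1}/2}$, so you need $V_{0}+V_{1}/2\le|\mathcal{T}|$. But $V\le|\mathcal{T}|+c$ together with $c\le V_{1}+V_{2}$ only yields $V_{0}\le|\mathcal{T}|$, which is too weak whenever $V_{1}>0$. The missing ingredient is precisely where $\Gamma^{+}$ enters: the paper shows the stronger inequality $V_{0}+V_{1}\le|\mathcal{T}|$ (its Lemma~\ref{lem_gamma_plus}) not via Euler, but by mapping each edge group to the vertex class of its common \emph{end} vertex and arguing that every vertex class with at most one starting vertex must contain some $i_{t}^{(b)}$ with $t\ge 1$ (this is exactly what $|G|\ge 2$ forces), hence lies in the image of that map. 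Your proposal correctly flags $\Gamma^{+}$ as crucial but never deploys it in this way; the Euler inequality alone cannot absorb the $n^{c}$ slack.
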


	Armed with this lemma, we can further obtain
	\begin{align}
	\eqref{eq:expectation-1} \leq & \sum_{\mathcal{T} \in \Gamma^+}B^{sk} \left(\frac{\sigma^2}{B^2}\right)^{|\mathcal{T}|} \left(\sqrt{\frac{\mu}{n}}\right)^kn^{|\mathcal{T}|} 
	= \sum_{\mathcal{T} \in \Gamma^+}B^{sk-2|\mathcal{T}|} \left(n \sigma^2\right)^{|\mathcal{T}|} \left(\sqrt{\frac{\mu}{n}}\right)^k
	\nonumber\\
	\leq & \,  \sum_{\mathcal{T} \in \Gamma^+} \sum_{l=1}^{sk/2} \ind_{\{|\mathcal{T}|=l\}} B^{sk-2|\mathcal{T}|} \left(n \sigma^2\right)^{|\mathcal{T}|} \left(\sqrt{\frac{\mu}{n}}\right)^k  \nonumber\\
	= & \sum_{l=1}^{sk/2} \left\{\sum_{\mathcal{T}\in \Gamma^+} \ind_{\{|\mathcal{T}|=l\}} \right\} B^{sk-2l} \left(n \sigma^2\right)^{l} \left(\sqrt{\frac{\mu}{n}}\right)^k,
	\label{eq:expetation-2}	  
	\end{align}
	where we have grouped the types based on their cardinality. The last inequality results from $|\mathcal{T}| \leq sk/2$ in Lemma \ref{lem_gamma_plus}. The following lemma bounds the number of distinct types having the same cardinality.
	%

	%
%

	 %
	 \begin{lem} 
	 	\label{lem_gamma_plus_l}
	 	$ \sum_{\mathcal{T}\in \Gamma^+} \ind_{\{|\mathcal{T}|=l\}}  \leq \binom{sk}{l}\, l^{sk-l} \,\leq 2^{sk} l^{sk-l}$. 
	 \end{lem}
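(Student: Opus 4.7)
My plan is to prove the bound by a standard representative-and-assignment encoding argument for partitions, observing that we only need an upper bound (since $\Gamma^+$ is a subset of all partitions of $[sk]$ into $l$ non-empty blocks).

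First, I would reduce the problem to counting partitions. Note that a type $\mathcal{T}$ with $|\mathcal{T}|=l$ is by definition a partition of the $sk$-element index set $\{(t,b): 1 \le t \le s, \, 1\le b\le k\}$ (identifying each edge $\bm{e}_t^{(b)}$ with its index pair $(t,b)$) into exactly $l$ non-empty subsets. The condition defining $\Gamma^+$ only imposes further restrictions (each block has size $\geq 2$), so
\[
\sum_{\mathcal{T}\in \Gamma^+} \ind_{\{|\mathcal{T}|=l\}} \;\leq\; S(sk, l),
\]
where $S(sk, l)$ denotes the Stirling number of the second kind.

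Second, I would bound $S(sk, l)$ via an encoding. Fix any total order on the index set (e.g., lexicographic). Given any partition into $l$ non-empty blocks, define the ``representative'' of each block to be its smallest element under this order; this determines an $l$-subset $R \subseteq [sk]$ of representatives. The remaining $sk - l$ elements can each be assigned to one of $l$ blocks (indexed by the representatives). This encoding is injective on the set of partitions: distinct partitions yield distinct (representative set, assignment) pairs. Since there are $\binom{sk}{l}$ choices of $R$ and $l^{sk-l}$ assignments of the remaining elements, we obtain
\[
S(sk, l) \;\leq\; \binom{sk}{l}\, l^{sk-l}.
\]
(Note that the encoding is in general not surjective, since not every assignment preserves the ``smallest element'' property of the representatives, which is why we get an inequality rather than an equality.)

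Finally, combining with the trivial bound $\binom{sk}{l} \leq \sum_{j=0}^{sk}\binom{sk}{j} = 2^{sk}$ yields the second inequality $\binom{sk}{l}\, l^{sk-l} \leq 2^{sk}\, l^{sk-l}$, completing the proof. There is no serious obstacle here: the main step is simply recognizing the representative-assignment encoding of Stirling numbers of the second kind, after which both inequalities are routine.
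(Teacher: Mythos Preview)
Your proof is correct and follows essentially the same encoding argument as the paper: pick $l$ distinguished edges (one per block) in $\binom{sk}{l}$ ways, then assign the remaining $sk-l$ edges to the $l$ blocks in at most $l^{sk-l}$ ways. Your version is slightly more careful in that you explicitly enforce injectivity by taking each block's minimum as its representative and you route the argument through the Stirling number $S(sk,l)$, whereas the paper leaves the overcounting implicit; but the underlying idea and the resulting bound are identical.
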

	 \begin{proof}
	 	The quantity of interest is the number of ways to partition $sk$ edges into $l$ disjoint subsets, where each subset contains at least 2 edges. To bound this quantity, we first pick $l$ edges and assign each of them to a distinct subset; there are $\binom{sk}{l}$ different ways to achieve it.  We still have $sk-l$ edges left, and the number of ways to assign them to $l$ subsets is clearly upper bounded by $l^{sk-l}$. This concludes the proof. 
	 \end{proof}
	
	 Consequently, Lemma \ref{lem_gamma_plus_l} yields
	 \begin{align*}
	 \eqref{eq:expetation-2} \leq  & \, 2^{sk}\sum_{l=1}^{sk/2}  \left(Bl\right)^{sk-2l} \left(n \sigma^2l\right)^{l} \left(\sqrt{\frac{\mu}{n}}\right)^k \\
	 \stackrel{(\mathrm{i})}\leq & \, 2^{sk}\sum_{l=1}^{sk/2}  \left(\frac{Bsk}{2}\right)^{sk-2l} \left(\frac{n \sigma^2 sk}{2}\right)^{l} \left(\sqrt{\frac{\mu}{n}}\right)^k \\
	 \stackrel{(\mathrm{ii})}\leq & \, 2^{sk}\sum_{l=1}^{sk/2}  \max\left\{\left(\frac{Bsk}{2}\right)^{sk} ,\left(\frac{n \sigma^2 sk}{2}\right)^{sk/2}\right\} \left(\sqrt{\frac{\mu}{n}}\right)^k \\
	 = & \, \frac{sk}{2}  \max\left\{\left(Bsk\right)^{sk} ,\left(2n \sigma^2 sk\right)^{sk/2}\right\} \left(\sqrt{\frac{\mu}{n}}\right)^k , 
	 \end{align*}
	 where  (i) uses the condition $l \leq sk/2$, and (ii) relies on the fact that $a^{sk-2l} b^l = a^{sk-2l} (\sqrt{b})^{2l}\leq \max\{ a^{sk}, (\sqrt{b})^{sk} \}$ for any $a,b>0$. 
  	
	Taken collectively, the preceding bounds conclude the proof of Lemma \ref{lem_high_order_terms}, provided that Lemma \ref{lem_multiple_partition} can be established.

\section{Proof of Lemma \ref{lem_multiple_partition}}
\label{appendix:proof-lem_multiple_partition}

To begin with,
\begin{align}
\left| \mathbb{E}\left[\prod_{b=1}^{k}a_{i_{0}^{(b)}}\left(\prod_{t=1}^{s} H_{i_{t-1}^{(b)}i_{t}^{(b)}} \right)u_{i_{s}^{(b)}}^{\star}\right] \right| 
	& \leq\mathbb{E}\left[\prod_{b=1}^{k}\big|a_{i_{0}^{(b)}}\big|\left(\prod_{t=1}^{s}\big|H_{i_{t-1}^{(b)}i_{t}^{(b)}}\big|\right)\big|u_{i_{s}^{(b)}}^{\star}\big|\right]\nonumber \\
 	& \leq\left(\sqrt{\frac{\mu}{n}}\right)^{k}\left(\prod_{b=1}^{k}\big|a_{i_{0}^{(b)}}\big|\right)\mathbb{E}\left[\prod_{b=1}^{k}\prod_{t=1}^{s}\big|H_{i_{t-1}^{(b)}i_{t}^{(b)}}\big|\right],\label{eq:expectation-3}
\end{align}
where the last inequality holds since each entry of $\bm{u}^{\star}$
is bounded in magnitude by $\sqrt{\mu/n}$ (see Definition \ref{def_incoherence_parameter}). 

According to the definition of  
$\Gamma^+$ (see \eqref{defn:Gamma+}),  
for any type $\mathcal{T}\in\Gamma^{+}$, each edge $\big(i_{t-1}^{(b)},i_{t}^{(b)}\big)$
is repeated at least twice. The total number of distinct edges
is exactly $|\mathcal{T}|$. For notational simplicity, suppose the
distinct edges are $\bm{e}_{1},\cdots,\bm{e}_{|\mathcal{T}|}$, where
$\bm{e}_{i}$ is repeated by $l_{i}\geq2$ times. Then we can write
\[
\mathbb{E}\left[\prod_{b=1}^{k}\prod_{t=1}^{s}\big|H_{i_{t-1}^{(b)}i_{t}^{(b)}}\big|\right]=\prod_{i=1}^{|\mathcal{T}|}\mathbb{E}\left[\big|H_{\bm{e}_{i}}\big|^{l_{i}}\right]\leq\prod_{i=1}^{|\mathcal{T}|}\sigma^{2}B^{l_{i}-2}=\left(\frac{\sigma^{2}}{B^{2}}\right)^{|\mathcal{T}|}B^{\sum_{i}l_{i}}=\left(\frac{\sigma^{2}}{B^{2}}\right)^{|\mathcal{T}|}B^{sk},
\]
where the inequality follows since
\[
\mathbb{E}\left[\big|H_{ij}\big|^{l}\right]\leq B^{l-2}\mathbb{E}\left[\big|H_{ij}\big|^{2}\right] \leq \sigma^{2}B^{l-2},\qquad l\geq2.
\]
Substitution into (\ref{eq:expectation-3}) yields
\begin{align}
 & \left|\sum_{\mathcal{I}\in[n]^{(s+1)k}:\,\mathsf{type}(\mathcal{I})=\mathcal{T}}\mathbb{E}\left[\prod_{b=1}^{k}a_{i_{0}^{(b)}}\left(\prod_{t=1}^{s} H_{i_{t-1}^{(b)}i_{t}^{(b)}} \right)u_{i_{s}^{(b)}}^{\star}\right]\right|\nonumber \\
	& \qquad\leq\left(\sqrt{\frac{\mu}{n}}\right)^{k}\left(\frac{\sigma^{2}}{B^{2}}\right)^{|\mathcal{T}|}B^{sk}\left\{ \sum_{\mathcal{I}\in[n]^{(s+1)k}:\,\mathsf{type}(\mathcal{I})=\mathcal{T}} ~\prod_{b=1}^{k}\big|a_{i_{0}^{(b)}}\big|\right\}.\label{eq:expectation-4}
\end{align}

It then boils down to controlling $ \sum_{\mathcal{I}\in[n]^{(s+1)k}:\,\mathsf{type}(\mathcal{I})=\mathcal{T}}\prod_{b=1}^{k}\big|a_{i_{0}^{(b)}}\big|$. This is achieved by the following lemma, which we establish in Appendix \ref{appendix-proof-lem:UB-a-prod}. 
\begin{lem} \label{lem:UB-a-prod}
	For any type $\mathcal{T}\in \Gamma^+$ and any unit vector $\bm{a}\in \mathbb{R}^n$, one has
	\begin{equation}
		\sum_{\mathcal{I}\in[n]^{(s+1)k}:\,\mathsf{type}(\mathcal{I})=\mathcal{T}} ~\prod_{b=1}^{k}\big|a_{i_{0}^{(b)}}\big| \leq n^{|\mathcal{T}|}. 
	\end{equation}
\end{lem}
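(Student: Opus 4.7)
The plan is to organize the sum by the geometry of the \emph{quotient graph} $G_{\mathcal{T}}$, whose vertices are the equivalence classes of indices forced by the equalities defining $\mathcal{T}$ and whose $|\mathcal{T}|$ directed edges are the edge equivalence classes. I decompose $G_{\mathcal{T}}$ into its weakly connected components $\alpha=1,\dots,c$, with $V_{\alpha}$ vertex classes and $E_{\alpha}$ edge classes so that $\sum_{\alpha} E_{\alpha}=|\mathcal{T}|$. For each $\alpha$, let $k_{\alpha}$ be the number of walks whose edges live in $\alpha$; the starting indices $\{i_{0}^{(b)}:b\in\alpha\}$ occupy $m_{\alpha}$ distinct vertex classes with multiplicities $d_{\alpha,1},\dots,d_{\alpha,m_{\alpha}}$ summing to $k_{\alpha}$, and let $m_{\alpha,1}$ count how many of these multiplicities equal $1$. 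The sum of interest factorizes over components.

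For each component, relax $\mathsf{type}(\mathcal{I})=\mathcal{T}$ to merely requiring the equalities prescribed by $\mathcal{T}$ (an upper bound). Vertex classes carrying no starting index contribute $n^{V_{\alpha}-m_{\alpha}}$ free choices, while each class that carries $d_{\alpha,j}$ starting indices contributes $\sum_{i}|a_{i}|^{d_{\alpha,j}}=\|\bm{a}\|_{d_{\alpha,j}}^{d_{\alpha,j}}$. Since $\|\bm{a}\|_{p}\leq\|\bm{a}\|_{2}=1$ for $p\geq 2$ and $\|\bm{a}\|_{1}\leq\sqrt{n}$ by Cauchy--Schwarz, the contribution of component $\alpha$ is at most $n^{V_{\alpha}-m_{\alpha}+m_{\alpha,1}/2}$. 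Thus the lemma reduces to the structural inequality
\[
V_{\alpha}-m_{\alpha}+m_{\alpha,1}/2 \leq E_{\alpha} \qquad \text{for every component } \alpha.
\]

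For components with $k_{\alpha}\geq 2$, the naive undirected tree bound $V_{\alpha}\leq E_{\alpha}+1$ will suffice provided $m_{\alpha}-m_{\alpha,1}/2\geq 1$. This is easy: if some $d_{\alpha,j}\geq 2$, that single class contributes at least $1$ to the slack; otherwise every multiplicity equals $1$, which forces $m_{\alpha}=k_{\alpha}\geq 2$ and hence $m_{\alpha}-m_{\alpha,1}/2=m_{\alpha}/2\geq 1$. The genuinely delicate case is $k_{\alpha}=1$, in which $m_{\alpha}=m_{\alpha,1}=1$ leaves only a slack of $1/2$ and I must upgrade to $V_{\alpha}\leq E_{\alpha}$. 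Here I exploit the Eulerian-type nature of a single walk lying in $\Gamma^{+}$: every directed edge $(u,v)$ is traversed at least twice, and if it is used at times $t_{1}<t_{2}$ the walk necessarily travels from $v$ back to $u$ in between, exhibiting a directed cycle through $(u,v)$. Consequently no edge can cross between two distinct strongly connected components (such a crossing could never be repeated), so the walk stays in a single SCC and $G_{\mathcal{T}}|_{\alpha}$ is itself strongly connected; for directed graphs this forces $E_{\alpha}\geq V_{\alpha}$ (trivially when $V_{\alpha}=1$, and from the existence of a spanning directed cycle when $V_{\alpha}\geq 2$).

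The main obstacle is precisely this single-walk case: the tree bound alone is too weak to absorb the weight deficit, and one must upgrade it via the strong-connectivity argument driven by the walk's cyclic structure. Once that structural inequality is secured, summation across components gives $\sum_{\alpha}\bigl(V_{\alpha}-m_{\alpha}+m_{\alpha,1}/2\bigr)\leq\sum_{\alpha}E_{\alpha}=|\mathcal{T}|$, so the total sum is at most $n^{|\mathcal{T}|}$, as required.
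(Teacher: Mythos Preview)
Your argument is correct and takes a genuinely different route from the paper's.

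The paper does not work on the quotient graph directly. Instead it introduces an \emph{augmented} vertex set of size $2(s+1)k$ (duplicating each index into a $p$-copy and a $q$-copy), builds an auxiliary ``induced graph'' $\mathcal{G}(\mathcal{T})$, and classifies its connected components by how many starting vertices $q_0^{(b)}$ they contain (at most one, or at least two). Components of the first kind are bounded by $\sum_x 1=n$ using only $|a_x|\le 1$, and components of the second kind by $\sum_x |a_x|^{\ge 2}\le \|\bm a\|_2^2=1$; the sum is thus $n^{m_1(\mathcal{T})}$, and a separate combinatorial lemma then shows $m_1(\mathcal{T})\le |\mathcal{T}|$. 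By contrast, you stay on the natural quotient graph, use the sharper bound $\|\bm a\|_1\le\sqrt{n}$ for singleton starting classes, and reduce everything to the vertex--edge inequality $V_\alpha-m_\alpha+m_{\alpha,1}/2\le E_\alpha$, which you dispatch via the tree bound when $k_\alpha\ge 2$ and via the strong-connectivity argument (every repeated directed edge lies on a directed cycle, hence no edge can cross between SCCs) when $k_\alpha=1$. The paper's duplication trick is what lets it avoid both the $\sqrt{n}$ bound and the strong-connectivity step; your approach trades that trick for a more hands-on graph analysis but keeps the combinatorics self-contained in a single pass.
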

This lemma together with \eqref{eq:expectation-4} concludes the proof.

\section{Proof of Lemma \ref{lem:UB-a-prod}}
\label{appendix-proof-lem:UB-a-prod}

	\subsection{Graphical tools} 

	The proof of Lemma \ref{lem:UB-a-prod} is combinatorial in nature. Before proceeding, we  introduce a few graphical notions that will be useful.

	To begin with, recall that the index collection we have used so far is 
	\begin{equation}
		\mathcal{I} := \left\{i_t^{(b)} \mid 0\leq t \leq s , 1 \leq b \leq k\right\} ~\in~ [n]^{(s+1)k} ,
	\end{equation}
	which involves $(s+1)k$ vertices. As it turns out, it is helpful to further augment it into $2(s+1)k$ vertices via simple duplication.  Specifically, introduce the following set of $2(s+1)k$ vertices
	\begin{equation}
		\label{eq:defn:V}
		\mathcal{V} = \left\{q_t^{(b)} \mid 0\leq t \leq s, 1 \leq b \leq k\right\}   \,\bigcup\, \left\{p_t^{(b)} \mid 1\leq t \leq s+1, 1 \leq b \leq k\right\}. 
	\end{equation}
	Throughout this paper, we shall 
	\begin{itemize}
		\item view $\{q_t^{(b)} \mid 0\leq t \leq s, 1 \leq b \leq k \}$ as a copy of $\{i_t^{(b)} \mid 0\leq t \leq s , 1 \leq b \leq k \} $;
		\item view  $\{p_t^{(b)} \mid 1\leq t \leq s+1, 1 \leq b \leq k \}$ as another copy of $\{i_{t-1}^{(b)} \mid 1\leq t \leq s+1 , 1 \leq b \leq k \} $.
	\end{itemize}
	The main incentive is that it allows us to reparametrize
	\begin{equation}
		\label{eq:reparametrize}
		\prod_{b=1}^k a_{i_0^{(b)}} \left(\prod_{t=1}^s H_{i_{t-1}^{(b)}i_t^{(b)}}\right) u_{i_s^{(b)}}^\star  ~\longrightarrow~ \prod_{b=1}^ka_{q_0^{(b)}} \left(\prod_{t=1}^s H_{p_{t}^{(b)}q_t^{(b)}}\right) u_{p_{s+1}^{(b)}}^\star 
	\end{equation}

	Recall that we have categorized $\mathcal{I}$ into different ``types''. Given that $\mathcal{V}$ is simply a redundant representation of $\mathcal{I}$, we can also associate a type with each $\mathcal{V}$ by enforcing proper constraints. These constraints can be encoded via the following graphical notion.

	\begin{defn}[\textbf{Induced graph}] 
		\label{def_induced_graph} 
		For any given type $\mathcal{T}$, we define the induced graph $\mathcal{G}(\mathcal{T})$ with the vertex set $\mathcal{V}$ and the (undirected) edge set $\mathcal{E}(\mathcal{T})$ as
		 $\mathcal{E}(\mathcal{T}) = \mathcal{E}_1 \cup {\mathcal{E}_2(\mathcal{T})}$, where
		\begin{equation}
		\begin{aligned}
			\mathcal{E}_1 & \triangleq \left\{ \big( p_t^{(b)},q_{t-1}^{(b)} \big) ~\big|~  1\leq t \leq s+1, 1 \leq b \leq k  \right\}; \\
			\mathcal{E}_2(\mathcal{T}) & \triangleq \left\{ \big( p_{t_1}^{(b_1)}, p_{t_2}^{(b_2)} \big), \big( q_{t_1}^{(b_1)},q_{t_2}^{(b_2)} \big) ~\big|~ \forall t_1, b_1, t_2, b_2: \big(i_{t_{1}-1}^{(b_{1})},i_{t_{1}}^{(b_{1})}\big) = \big(i_{t_{2}-1}^{(b_{2})},i_{t_{2}}^{(b_{2})}\big) \right\} . 
		\end{aligned}
		\end{equation}
	\end{defn}
	In words, 
	\begin{itemize}
		\item[(1)] we connect the vertices $p_t^{(b)}$ (which is a copy of $i_t^{(b)}$) and $q_{t-1}^{(b)}$ (which represents $i_{t-1}^{(r)}$) by an edge;  
		\item[(2)] whenever two edges $\big(i_{t_{1}-1}^{(b_{1})},i_{t_{1}}^{(b_{1})}\big)$ and $\big(i_{t_{2}-1}^{(b_{2})},i_{t_{2}}^{(b_{2})}\big)$ are equivalent in $\mathcal{I}$ (or $\mathcal{T}$), we draw two edges connecting the corresponding vertices in the induced graph.
	\end{itemize}
	As an illustration, Fig.~\ref{fig:induced-graph} displays the induced graph for a simple example with $s=4$ and $k=2$, where
	$$
\mathcal{T}=\Big\{\big\{\underset{\bm{e}_{1}^{(1)}}{\underbrace{\big(i_{0}^{(1)},i_{1}^{(1)}\big)}},\underset{\bm{e}_{1}^{(2)}}{\underbrace{\big(i_{0}^{(2)},i_{1}^{(2)}\big)}}\big\},\big\{\underset{\bm{e}_{2}^{(1)}}{\underbrace{\big(i_{1}^{(1)},i_{2}^{(1)}\big)}},\underset{\bm{e}_{4}^{(1)}}{\underbrace{\big(i_{3}^{(1)},i_{4}^{(1)}\big)}}\big\},\big\{\underset{\bm{e}_{2}^{(2)}}{\underbrace{\big(i_{1}^{(2)},i_{2}^{(2)}\big)}},\underset{\bm{e}_{4}^{(2)}}{\underbrace{\big(i_{3}^{(2)},i_{4}^{(2)}\big)}}\big\},\big\{\underset{\bm{e}_{3}^{(1)}}{\underbrace{\big(i_{2}^{(1)},i_{3}^{(1)}\big)}}\big\},\big\{\underset{\bm{e}_{3}^{(2)}}{\underbrace{\big(i_{2}^{(2)},i_{3}^{(2)}\big)}}\big\}\Big\}	.$$
	Here, the 45-degree lines correspond to the edges in $\mathcal{E}_1$, and the remaining edges come from  $\mathcal{E}_2(\mathcal{T})$.  Throughout the rest of the paper, we will abuse the notation and write $\mathsf{type}(\mathcal{V})=\mathcal{T}$ if $\mathcal{V}$ is induced by an index set of type $\mathcal{T}$.

	\begin{figure}
		\centering
		\begin{tabular}{ccc}
			\includegraphics[width=0.4\textwidth]{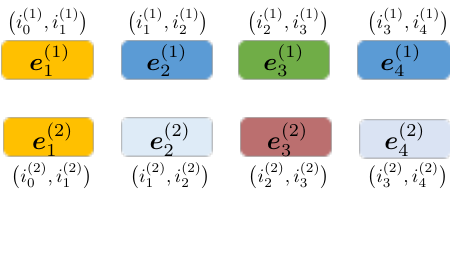} & \qquad \qquad & \includegraphics[width=0.4\textwidth]{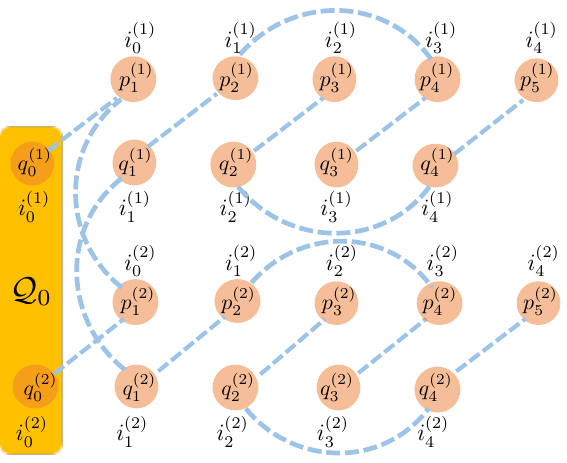}   \tabularnewline 
		\end{tabular} 
		\caption{An example of the induced graph, where $s=4$, $k=2$. The left plot shows the partition, while the right plot displays  the induced graph and the vertex set $\mathcal{Q}_0$ (the yellow region). 		
		  \label{fig:induced-graph}}
	\end{figure}

	One useful feature of the above induced graph is that: each edge subset in the partition associated with $\mathcal{T}$ corresponds to a connected component in $\mathcal{G}(\mathcal{T})$. For our purpose, it is convenient to divide all connected components of  $\mathcal{G}(\mathcal{T})$ into two classes. To this end, we first define $\mathcal{Q}_0 \triangleq \big\{q_0^{(b)} \mid 1\leq b \leq k \big\}$ (as illustrated in Fig.~\ref{fig:induced-graph}). 
	\begin{itemize}
		\item {\em Class 1}: a connected component $\mathcal{C}$  belongs to Class 1 if $| \mathcal{C}\cap \mathcal{Q}_0 | \leq 1$;
		\item {\em Class 2}: a connected component $\mathcal{C}$  belongs to Class 2 if $| \mathcal{C}\cap \mathcal{Q}_0 | \geq 2$. 
	\end{itemize}
	We denote by $m_1(\mathcal{T})$ (resp.~$m_2(\mathcal{T})$) the total number of Class 1 (resp.~2) connected components in $\mathcal{G}(\mathcal{T})$.

	

\subsection{Proof of Lemma \ref{lem:UB-a-prod}}

Making use of the augmented set $\mathcal{V}$ (defined in \eqref{eq:defn:V}) and the way of reparametrization \eqref{eq:reparametrize}, we can write 
	\begin{align}
		 \sum_{\mathcal{I} \in [n]^{(s+1)k}:\, \mathsf{type}(\mathcal{I}) = \mathcal{T}} \mathbb{E} \left[ \prod_{b=1}^k \big| a_{i_0^{(b)} } \big|  \right]   
		\leq \sum_{\substack{\mathcal{V} \in [n]^{2(s+1)k}:\, \mathsf{type}(\mathcal{V})= \mathcal{T} }} \mathbb{E} \left[ \prod_{b=1}^k \big| a_{q_0^{(b)}}  \big|  \right]  . \label{eq:expectation-7}
	\end{align}
	As we shall see, the benefit of this bound is to allow us sum over the connected components of $\mathcal{G}(\mathcal{T})$ in a separate manner, since all indices in the same connected component must be identical. Specifically, 
	\begin{itemize}
		\item Let $\mathcal{X}_1,\mathcal{X}_2,\cdots$ (resp.~$\mathcal{Y}_1,\mathcal{Y}_2,\cdots$) denote the collection of Class 1 (resp.~Class 2) connected components in $\mathcal{G}(\mathcal{T})$;
		\item Denote by $x_i$ (resp.~$y_i$) the value assigned to all indices in $\mathcal{X}_i$ (resp.~$\mathcal{Y}_i$);	 
	\end{itemize}
	With these notations in mind, we can decompose
	\begin{align*}
		\eqref{eq:expectation-7} & \leq\sum_{x_{1}=1}^{n}\sum_{x_{2}=1}^{n}\cdots\sum_{y_{1}=1}^{n}\sum_{y_{2}=1}^{n}\cdots\mathbb{E}\left[\big|a_{x_{1}}\big|^{|\mathcal{X}_{1}\cap\mathcal{Q}_{0}|}\cdot\big|a_{x_{2}}\big|^{|\mathcal{X}_{2}\cap\mathcal{Q}_{0}|}\cdots\big|a_{y_{1}}\big|^{|\mathcal{Y}_{1}\cap\mathcal{Q}_{0}|}\cdot\big|a_{y_{2}}\big|^{|\mathcal{Y}_{2}\cap\mathcal{Q}_{0}|}\cdots\right]\\
 		& =\mathbb{E}\left[\Bigg(\sum_{x_{1}=1}^{n}\big|a_{x_{1}}\big|^{|\mathcal{X}_{1}\cap\mathcal{Q}_{0}|}\Bigg)\Bigg(\sum_{x_{2}=1}^{n}\big|a_{x_{2}}\big|^{|\mathcal{X}_{2}\cap\mathcal{Q}_{0}|}\Bigg)\cdots\Bigg(\sum_{y_{1}=1}^{n}\big|a_{y_{1}}\big|^{|\mathcal{Y}_{1}\cap\mathcal{Q}_{0}|}\Bigg)\Bigg(\sum_{y_{2}=1}^{n}\big|a_{y_{2}}\big|^{|\mathcal{Y}_{2}\cap\mathcal{Q}_{0}|}\Bigg)\cdots\right]\\
 		& \overset{(\text{i})}{\leq}\mathbb{E}\left[\Bigg(\sum_{x_{1}=1}^{n}1\Bigg)\Bigg(\sum_{x_{2}=1}^{n}1\Bigg)\cdots\Bigg(\sum_{y_{1}=1}^{n}\big|a_{y_{1}}\big|^{2}\Bigg)\Bigg(\sum_{y_{2}=1}^{n}\big|a_{y_{2}}\big|^{2}\Bigg)\cdots\right]\\
 		& \overset{(\text{ii})}{=} \Bigg(\sum_{x_{1}=1}^{n}1\Bigg)\Bigg(\sum_{x_{2}=1}^{n}1\Bigg)\cdots 1\cdot 1 \cdots = n^{m_{1}(\mathcal{T})},
\end{align*}
	where $m_{1}(\mathcal{T})$ is the total number of Class 1 connected components in the induced graph $\mathcal{G}(\mathcal{T})$.  Here, (i) comes from the definitions of Class 1 and Class 2 connected components, and (ii) follows since $\|\bm{a}\|_{2}^{2}=1$.

	We can thus  finish the proof by observing that $m_1(\mathcal{T}) \leq |\mathcal{T}| $, as claimed  in the following lemma. 
	\begin{lem} \label{lem_gamma_plus} 
		For any type $\mathcal{T} \in \Gamma^+$, one has $m_1(\mathcal{T}) \leq |\mathcal{T}| \leq sk/2$.
	\end{lem}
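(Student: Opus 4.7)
The statement splits into two inequalities. The bound $|\mathcal{T}|\leq sk/2$ is immediate from the definition of $\Gamma^+$: the $sk$ edges $\bm{e}_t^{(b)}$ are partitioned into $|\mathcal{T}|$ blocks, each of size at least $2$, so $2|\mathcal{T}|\leq sk$.

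For the harder inequality $m_1(\mathcal{T})\leq |\mathcal{T}|$, my plan is to recast the induced graph as a much smaller directed multi-graph and reduce the claim to the existence of an injection from Class 1 components to equivalence classes in $\mathcal{T}$. Let $V$ be the set of connected components of $\mathcal{G}(\mathcal{T})$, and build a directed multi-graph $\mathsf{MG}=(V,E)$ by placing, for each equivalence class $\alpha\in\mathcal{T}$, one directed edge from the component containing $i_{t-1}^{(b)}$ to the component containing $i_t^{(b)}$, where $\bm{e}_t^{(b)}\in\alpha$ is any representative (the definition of $\mathcal{E}_2(\mathcal{T})$ guarantees this is independent of the choice). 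Then $|E|=|\mathcal{T}|$, and each column $b$ produces a directed walk $\mathcal{W}_b$ of length $s$ in $\mathsf{MG}$ that starts at the vertex $\tau_b\in V$ containing $q_0^{(b)}$. Writing $s_v:=|\{b:\tau_b=v\}|$, the Class 1 components are exactly the $v\in V$ with $s_v\leq 1$.

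I will then define an injection $v\mapsto e_v$ from $\{v\in V:s_v\leq 1\}$ into $E$ by letting $e_v$ be any in-edge of $v$ in $\mathsf{MG}$. Since distinct vertices have distinct in-edge heads, this map is automatically injective provided it is well-defined, i.e., provided every Class 1 vertex has in-degree at least one.

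Verifying positive in-degree at every Class 1 vertex is the main obstacle and the only place the $\Gamma^+$ hypothesis is used. The case $s_v=0$ is easy: the vertex $v$ is a non-empty equivalence class, so it must be visited by some walk, and without a start at $v$ such a visit arrives through an incoming edge. The case $s_v=1$ is the substantive one, which I will settle by contradiction. If $\mathrm{in\text{-}deg}(v)=0$, then no walk can reach $v$ except by starting at $v$, so the unique walk $\mathcal{W}_{b_0}$ with $\tau_{b_0}=v$ is the only walk that ever visits $v$, and it does so only at time $0$; consequently any out-edge $e$ of $v$ used by the walks is used exactly once, namely by $\mathcal{W}_{b_0}$ at time $1$. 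This contradicts $\ell_e\geq 2$, which holds because every edge of $\mathsf{MG}$ corresponds to a $\Gamma^+$-block of size at least $2$. The contradiction yields $m_1(\mathcal{T})\leq |E|=|\mathcal{T}|$ and completes the proof.
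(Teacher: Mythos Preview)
Your argument is correct. Both your proof and the paper's reduce the inequality $m_1(\mathcal{T})\le |\mathcal{T}|$ to the same core fact: every Class~1 component contains at least one ``head'' vertex $q_t^{(b)}$ with $t\ge 1$ (equivalently, in your language, every Class~1 vertex of $\mathsf{MG}$ has positive in-degree). From this, both proofs extract an injection from Class~1 components into the blocks of $\mathcal{T}$.

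Where you differ is in how you verify that core fact. The paper argues by counting: the $\mathcal{E}_1$-edges give a perfect matching between the $p$-vertices and the $q$-vertices inside any component, so $|\mathcal{X}_j\cap\{p_t^{(b)}\}|=|\mathcal{X}_j\cap\{q_t^{(b)}\}|$; since $\mathcal{T}\in\Gamma^+$ forces one of these counts to be at least $2$, both are at least $2$, and combining with $|\mathcal{X}_j\cap\mathcal{Q}_0|\le 1$ gives $\mathcal{X}_j\cap\mathcal{Q}_{\backslash 0}\neq\emptyset$. Your route instead condenses the induced graph to a directed multigraph with one edge per $\mathcal{T}$-block and $k$ walks of length $s$, and handles $s_v=1$ by a contradiction on edge multiplicities. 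The paper's counting step is shorter; your walk picture is a bit more structural and makes the role of the $\Gamma^+$ hypothesis (block size $\ge 2$ $\Leftrightarrow$ each multigraph edge traversed $\ge 2$ times) especially transparent.
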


	\subsection{Proof of Lemma \ref{lem_gamma_plus}}

	

		Introduce two disjoint vertex sets:
		\begin{align}
			\mathcal{Q}_{\textbackslash{}0} &\triangleq \big\{q_t^{(b)} \mid 1 \leq t \leq s, 1 \leq b \leq k \big\} ~\subset \mathcal{V}; \\
			\mathcal{Q}_0 &\triangleq \big\{q_0^{(b)} \mid 1\leq b \leq k \big\} ~\subset \mathcal{V}.
		\end{align}
		Clearly, one can define a partition of $\mathcal{Q}_{\textbackslash{}0}$ --- denoted by  $\mathcal{T}_{\mathcal{Q}_{\textbackslash{}0}}$ --- induced by $\mathcal{T}$. Specifically, we say that $q_{t_1}^{(b_1)}$ and $q_{t_2}^{(b_2)}$ belong to the same connected subgraph of $\mathcal{Q}_{\textbackslash{}0}$ if $\big(i_{t_{1}-1}^{(b_{1})},i_{t_{1}}^{(b_{1})}\big) = \big(i_{t_{2}-1}^{(b_{2})},i_{t_{2}}^{(b_{2})}\big)$ in $\mathcal{T}$.  Clearly, $|\mathcal{T}| = \left| \mathcal{T}_{\mathcal{Q}_{\textbackslash{}0}} \right|$. 
		%
		%
	
		In addition, for any connected subgraph $\mathcal{C}_q$ in $\mathcal{Q}_{\textbackslash{}0}$, we denote by $\mathcal{C}$ the corresponding connected component in $\mathcal{G}(\mathcal{T})$. 
		We can thus define a mapping $\psi$ that maps $\mathcal{C}_q$ to $\mathcal{C}$, whose domain is the set of all connected subgraphs in $\mathcal{Q}_{\textbackslash{}0}$. 
		%
		We claim that the collection of Class 1 connected components --- denoted by $\{\mathcal{X}_j\}$ --- obeys
		\begin{equation}  \label{eq:map-psi}
			\left\{\mathcal{X}_j \mid 1 \leq j \leq m_1(\mathcal{T})\right\} ~\subseteq~ \mathrm{Im}\psi,
		\end{equation}
		where $\mathrm{Im}\psi$ denotes the image of $\psi$, and $m_1(\mathcal{T})$ is the total number of Class 1 connected components. If this holds true, then 
		$$
			|\mathcal{T}| = \left| \mathcal{T}_{\mathcal{Q}_{\textbackslash{}0}} \right| \geq \left|\mathrm{Im} \psi \right| \geq | \left\{\mathcal{X}_j \mid 1 \leq j \leq m_1(\mathcal{T})\right\} | = m_1(\mathcal{T}) .
		$$

		To justify the claim \eqref{eq:map-psi}, it suffices to show that $\mathcal{X}_j \cap \mathcal{Q}_{\textbackslash{}0} \neq \emptyset$. Given that $\mathcal{T} \in \Gamma^+$ (so that each subset of the partition associated with $\mathcal{T}$ has cardinality at least 2) and that $\mathcal{X}_j$ is defined over the induced graph $\mathcal{G}(\mathcal{T})$ (which is a redundant representation of the original index collection), one must have
		%
		\begin{equation}  
		\max\left\{\left|\mathcal{X}_j \cap \left\{p_t^{(b)} \mid 1\leq t \leq s+1, 1 \leq b \leq k\right\}\right|, \left|\mathcal{X}_j \cap \left\{q_t^{(b)} \mid 0\leq t \leq s, 1 \leq b \leq k\right\}\right| \right\} \geq 2 .
		\end{equation}
		Further, from the construction of the induced graph,  it is easily seen that
		$$
		\left|\mathcal{X}_j \cap \left\{p_t^{(b)} \mid 1\leq t \leq s+1, 1 \leq b \leq k\right\}\right| = \left|\mathcal{X}_j \cap \left\{q_t^{(b)} \mid 0\leq t \leq s, 1 \leq b \leq k\right\}\right|,
		$$
		thus resulting in
		$$
		\left|\mathcal{X}_j \cap \left\{q_t^{(b)} \mid 0\leq t \leq s, 1 \leq b \leq k\right\}\right| = \left|\mathcal{X}_j \cap \left(\mathcal{Q}_0 \cup \mathcal{Q}_{\textbackslash{}0} \right)\right| \geq 2 .
		$$
		However, by definition of Class 1 connected component,  we have $\left|\mathcal{X}_j \cap \mathcal{Q}_0 \right| \leq 1$, implying that $\mathcal{X}_j \cap \mathcal{Q}_{\textbackslash{}0} \neq \emptyset$. This establishes the claim \eqref{eq:map-psi}. 

		Finally, when $\mathcal{T} \in \Gamma^+$, each connected subgraph of $\mathcal{T}_{\mathcal{Q}_{\textbackslash{}0}}$ also contains at least two nodes. 
		As a consequence,  
		$$
		|\mathcal{T}| = | \mathcal{T}_{\mathcal{Q}_{\textbackslash{}0}} | \leq | \mathcal{Q}_{\textbackslash{}0} | / 2 = {sk}/{2} .
		$$

	\section{Proof of Corollary \ref{cor_high_order_terms}}
\label{appendix:proof-cor_high_order_terms}

In the sequel, we assume that $20 \log n$ is an integer to avoid the clumsy notation $\lfloor 20 \log n\rfloor$. But it is straightforward to extend it to the case where $20 \log n$ is not an integer. 

It follows from Markov's inequality that for any even integer $k$,
		\[
			\mathbb{P} \left(\left|\bm{a}^\top \bm{H}^s \bm{u}^\star\right| \geq \tau \right) \leq \frac{  \mathbb{E}\big[ \left|\bm{a}^\top \bm{H}^s \bm{u}^\star\right|^k \big]  }{\tau^k}.
		\]
		This combined with Lemma \ref{lem_high_order_terms} gives
		\begin{align*}
			\mathbb{P} \left(\left|\bm{a}^\top \bm{H}^s \bm{u}^\star\right| \leq \left( c_2\max\left\{B\log n, \sqrt{n \sigma^2 \log n}\right\} \right)^s\sqrt\frac{\mu}{n}\right) 
			& \leq \frac{\left|\mathbb{E}\big[\left(\bm{a}^\top\bm{H}^s \bm{u}^\star\right)^k\big]\right|}{\left(c_2 \max\left\{B\log n, \sqrt{n \sigma^2 \log n}\right\}\right)^{sk}\left(\sqrt\frac{\mu}{n}\right)^k} \\
			& \leq \frac{sk}{2} \max\left\{\left(\frac{sk}{c_2\log n}\right)^{sk}, \left(\frac{2sk}{c_2^2\log n}\right)^{sk/2}\right\}.
		\end{align*}
		For any $s\leq  20 \log n$, choose $k$ such that $sk \in \left[20\log n, 40\log n\right]$. It is straightforward to show --- using the union bound --- that with probability at least $1- O( n^{-10})$, 
		$$
			\left|\bm{a}^\top \bm{H}^s \bm{u}^\star\right| \leq \left(c_2 \max\left\{B\log n, \sqrt{n \sigma^2 \log n}\right\}\right)^s \sqrt{\mu / n}, \qquad \forall s \leq  20 \log n ,
		$$
		as long as $c_2>0$ is some sufficiently large constant. 

	\section{Proof of Corollary \ref{cor:eigenvalue-pertubation-rank-r-A}}
\label{sec:corollary-proof-eigenvalue-rank-r}

Taking $\bm{a}=\bm{u}_{i}^{\star}$ for some $1\leq i\leq r$ in
\eqref{thm-rankr-UB1}, we see that with high probability, 
\[
\Bigg|\bm{u}_{i}^{\star\top}\Bigg(\bm{u}_{l}-\sum_{j=1}^{r}\frac{\lambda_{j}^{\star}\bm{u}_{j}^{\star\top}\bm{u}_{l}}{\lambda_{l}}\bm{u}_{j}^{\star}\Bigg)\Bigg|\lesssim\max\left\{ B \log n,\sqrt{n\sigma^{2}\log n}\right\} \frac{\kappa}{|\lambda_l|}\sqrt{\frac{\mu r}{n}}.
\]
In addition, since $\bm{u}_{1}^{\star},\cdots,\bm{u}_{r}^{\star}$
are orthogonal to each other, we have that: for any $1\leq i\leq r$,
\[
	\Bigg|\bm{u}_{i}^{\star\top}\Bigg(\bm{u}_{l}-\sum_{j=1}^{r}\frac{\lambda_{j}^{\star}\bm{u}_{j}^{\star\top}\bm{u}_{l}}{\lambda_{l}}\bm{u}_{j}^{\star}\Bigg)\Bigg|
	= \Bigg|\bm{u}_{i}^{\star\top}\bm{u}_{l}-\frac{\lambda_{i}^{\star}\bm{u}_{i}^{\star\top}\bm{u}_{l}}{\lambda_{l}}\Bigg|
	= \big|\bm{u}_{i}^{\star\top}\bm{u}_{l}\big|\Bigg|\frac{\lambda_{l}-\lambda_{i}^{\star}}{\lambda_{l}}\Bigg| 
	\geq \big|\bm{u}_{i}^{\star\top}\bm{u}_{l}\big|   \min_{1\leq j\leq r}\Bigg|\frac{\lambda_{l}-\lambda_{j}^{\star}}{\lambda_{l}}\Bigg|.
\]
Therefore, combining the above two bounds yields
\begin{equation}
	\min_{1 \leq j \leq r} \Bigg|\frac{\lambda_{l}-\lambda_{j}^{\star}}{\lambda_{l}}\Bigg| \max _{1 \leq i \leq r} \big|\bm{u}_{i}^{\star\top}\bm{u}_{l}\big| \lesssim \max\left\{ B\log n,\sqrt{n\sigma^{2}\log n}\right\} \frac{\kappa}{|\lambda_l|}\sqrt{\frac{\mu r}{n}}. \label{eq:UB1-lambda-u}
\end{equation}

Finally, it comes from Lemma \ref{lem_l2_convergence-rank-r} that if $\|\bm{H}\|\ll1/\kappa^2$, then $\sum_{1\leq i\leq r}\big|\bm{u}_{i}^{\star\top}\bm{u}_{l}\big|^2 \gtrsim 1$, and hence 
\begin{equation}
	\max_{1\leq j\leq r}\big|\bm{u}_{i}^{\star\top}\bm{u}_{l}\big| \geq  \sqrt{ \frac{1}{r} \sum_{1\leq i \leq r}\big|\bm{u}_{i}^{\star\top}\bm{u}_{l}\big|^2 }  \gtrsim 1 / \sqrt{r}.
\end{equation}
 This combined with \eqref{eq:UB1-lambda-u} establishes the claim,  as long as the spectral norm condition on $\|\bm{H}\|$ can be guaranteed.  In view of Lemma \ref{lem:H-norm}, we have $\|\bm{H}\|\ll1/\kappa^2$ under the condition \eqref{Bsigma_cond_4}, thus concluding the proof.

\section{Proof of Corollary \ref{cor:linear-form-rankr}}
\label{sec:corollary-proof-linear-form-rankr}

	Since $\|\bm{a}\|_2=1$, it follows immediately from \eqref{thm-rankr-UB2} that
	\begin{align}
	\left\|\bm{a}^\top \bm{U}\right\|_2 
	& = \sqrt{\sum_{l=1}^r \big| \bm{a}^\top \bm{u}_l \big|^2} \nonumber \\
	& \lesssim \sqrt{\sum_{l=1}^r   \left( \left| \sum_{j=1}^r \frac{\lambda_j^\star \bm{u}_j^{\star \top}\bm{u}_l}{\lambda_l} \bm{a}^\top \bm{u}_j^\star \right| 
		+ \max\left\{B \log n, \sqrt{n \sigma^2 \log n}\right\}\kappa^2\sqrt\frac{\mu r}{n}\right)^2}. \label{eq:UB-linear-fomrs-rankr}
	\end{align}
	Further, we have
	\begin{align}
	\eqref{eq:UB-linear-fomrs-rankr} 
		& \stackrel{(\mathrm{i})}{\leq}  \sqrt{\sum_{l=1}^r \left|\sum_{j=1}^r \frac{\lambda_j^\star \bm{u}_j^{\star \top}\bm{u}_l}{\lambda_l}\bm{a}^\top \bm{u}_j^\star \right|^2} +  \sqrt{ \sum_{l=1}^r \left( \max\left\{B \log n, \sqrt{n \sigma^2 \log n}\right\}\kappa^2 \sqrt\frac{\mu r}{n}  \right)^2 } \nonumber \\
	& \stackrel{(\mathrm{ii})}{\leq} \sqrt{\sum_{l=1}^r \left(\sum_{j=1}^r \left|\frac{\lambda_j^\star \bm{u}_j^{\star\top} \bm{u}_l}{\lambda_l}\right|^2\right)\left(\sum_{j=1}^r \left|\bm{a}^\top \bm{u}_j^\star\right|^2\right)}+  \max\left\{B \log n, \sqrt{n \sigma^2 \log n}\right\}\kappa^2 r\sqrt\frac{\mu }{n} \nonumber \\
	& \stackrel{(\mathrm{iii})}{\lesssim} \sqrt{r \kappa^2 \left(\sum_{j=1}^r \left|\bm{a}^\top \bm{u}_j^\star\right|^2\right)}+  \max\left\{B \log n, \sqrt{n \sigma^2 \log n}\right\}\kappa^2 r \sqrt\frac{\mu}{n} \nonumber \\
	& =\kappa \sqrt{r} \left\|\bm{a}^\top \bm{U}^\star\right\|_2+\max\left\{B \log n, \sqrt{n \sigma^2 \log n}\right\} \kappa^2 r\sqrt\frac{\mu}{n},
	\end{align}
	where $\mathrm{(i)}$ and $\mathrm{(ii)}$ make use of the Minkowski and the Cauchy-Schwarz inequalities, respectively, and (iii) result from the facts $|\lambda_j^\star/\lambda_l| \lesssim \kappa$ (which follows from \eqref{eq:BF-bound-rank-r} and \eqref{Bsigma_cond_4}) and $\sum_{j=1}^r \left|\bm{u}_j^{\star \top} \bm{u}_l\right|^2 \leq 1$. This concludes the proof.

	\section{Proof of Theorem \ref{cor:l2_convergence-rank-2-dilation}}
\label{appendix:proof-rank_2_dilation_matrix}

To simplify presentation, we introduce the following notation throughout this section: 
\[
\widetilde{\bm{u}}_{1}^{\star}=\frac{1}{\sqrt{2}}\left(\begin{array}{c}
\bm{u}^{\star}\\
\bm{v}^{\star}
\end{array}\right),\qquad\widetilde{\bm{u}}_{2}^{\star}=\frac{1}{\sqrt{2}}\left(\begin{array}{c}
\bm{u}^{\star}\\
-\bm{v}^{\star}
\end{array}\right),\qquad\widetilde{\bm{u}}_{1}=\bm{u}_{1}^{\mathsf{dilation}},\qquad\widetilde{\bm{u}}_{2}=\bm{u}_{2}^{\mathsf{dilation}};
\]
\[
\widetilde{\lambda}_{1}^{\star}=\lambda_{1}(\bm{M}_{\mathsf{dilation}}^{\star})=1,\qquad\widetilde{\lambda}_{2}^{\star}=\lambda_{2}(\bm{M}_{\mathsf{dilation}}^{\star})=-1,\qquad \widetilde{\lambda}_{1}=\lambda_{1}^{\mathsf{dilation}},\qquad\widetilde{\lambda}_{2} = \lambda_{2}^{\mathsf{dilation}}.
\]
In addition, we denote $\widetilde{\bm{u}}_{1,1}=\bm{u}^{\mathsf{dilation}}_{1,1}$  and $\widetilde{\bm{u}}_{1,2}=\bm{u}^{\mathsf{dilation}}_{1,2}$. 
Recall that we assume $\widetilde{\lambda}_{1} \geq \widetilde{\lambda}_{2}$. We also denote $\min \{ \| \bm{x}\pm \bm{y} \|_2\}=\min \{ \|\bm{x} - \bm{y}\|_2, \|\bm{x} + \bm{y}\|_2 \}$. 

\subsection{$\ell_2$ eigenvector perturbation bounds}

The $\ell_2$ perturbation bound  results from near orthogonality between $\widetilde{\bm{u}}_1$ and $\widetilde{\bm{u}}_2^{\star}$. By symmetry, it suffices to establish the result for $\bm{u}$.

To begin with, applying Theorem \ref{thm_Linear_Forms_1_rank_r_A} on  $\bm{M}_{\mathsf{dilation}}$ and taking $\bm{a} = \widetilde{\bm{u}}_2^{\star}$, we derive
\begin{align*}
\left|\widetilde{\bm{u}}_2^{\star\top} \widetilde{\bm{u}}_1 \left(1-\frac{\widetilde{\lambda}_2^{\star}}{\widetilde{\lambda}_1}\right)\right| 
	& = \left|\widetilde{\bm{u}}_{2}^{\star\top}\left(\widetilde{\bm{u}}_{1}-\frac{\widetilde{\lambda}_{1}^{\star}\widetilde{\bm{u}}_{1}^{\star\top}\widetilde{\bm{u}}_{1}}{\widetilde{\lambda}_{1}}\widetilde{\bm{u}}_{1}^{\star}-\frac{\widetilde{\lambda}_{2}^{\star}\widetilde{\bm{u}}_{2}^{\star\top}\widetilde{\bm{u}}_{1}}{\widetilde{\lambda}_{1}}\widetilde{\bm{u}}_{2}^{\star}\right)\right| \\
	& \lesssim \max\left\{B \log n, \sqrt{n \sigma^2 \log n}\right\} \sqrt\frac{\mu}{n},
\end{align*}
where the identity arises since $\widetilde{\bm{u}}_2^{\star\top}\widetilde{\bm{u}}_1^{\star}=0$. 
Given that $\widetilde{\lambda}_2^{\star} = -1$ and $\widetilde{\lambda}_1 > 0$ (see Corollary \ref{cor:eigenvalue-pertubation-rank-2}), we have $1-\widetilde{\lambda}_2^{\star}/\widetilde{\lambda}_1 > 1$.  The near orthogonality property can then be described as follows
\begin{equation} \label{eq:near-orthogonality}
	\left|\widetilde{\bm{u}}_2^{\star \top} \widetilde{\bm{u}}_1\right|  \leq \left|\widetilde{\bm{u}}_2^{\star\top} \widetilde{\bm{u}}_1 \big(1- {\widetilde{\lambda}_2^{\star}}/{\widetilde{\lambda}_1}\big)\right|  \lesssim \max\left\{B \log n, \sqrt{n \sigma^2 \log n}\right\} \sqrt\frac{\mu}{n} .
\end{equation}
This combined with Lemma \ref{lem_l2_convergence-rank-r}  yields
\begin{align}
\min\{\|\widetilde{\bm{u}}_1 \pm \widetilde{\bm{u}}_1^{\star}\|_{2} \} & = \sqrt{\left\|\widetilde{\bm{u}}_1^{\star }\right\|_2^2 + \left\|\widetilde{\bm{u}}_1\right\|_2^2-2\left|\widetilde{\bm{u}}_1^{\star \top} \widetilde{\bm{u}}_1\right|} \nonumber \\
& = \sqrt{2 \left(1-\left|\widetilde{\bm{u}}_1^{\star \top} \widetilde{\bm{u}}_1\right|\right)}  \leq \sqrt{2 \big(1-\left|\widetilde{\bm{u}}_1^{\star \top} \widetilde{\bm{u}}_1\right|^2\big)} \nonumber\\
	& \stackrel{(\mathrm{i})}{\leq} \sqrt{\frac{128}{9} \left\|\bm{H}_{\mathsf{dilation}}\right\|^2+2\left|\widetilde{\bm{u}}_2^{\star \top} \widetilde{\bm{u}}_1\right|^2} \nonumber\\
	& \stackrel{(\mathrm{ii})}{\lesssim}  \left\|\bm{H}_{\mathsf{dilation}}\right\| +  \left|\widetilde{\bm{u}}_2^{\star \top} \widetilde{\bm{u}}_1\right| \nonumber\\
	& \stackrel{(\mathrm{iii})}{\lesssim}  \max\left\{B \log n, \sqrt{n \sigma^2 \log n}\right\},  \label{eq:UB-25}
\end{align}
where $(\mathrm{i})$ results from Lemma \ref{lem_l2_convergence-rank-r} and the fact that the condition number of $\bm{M}_{\mathsf{dilation}}^{\star}$ is 1, $(\mathrm{ii})$ follows since $\sqrt{x^2+y^2} \leq x + y$ for all $x, y \geq 0$, and $(\mathrm{iii})$ arises from (\ref{eq:near-orthogonality}), Lemma \ref{lem:H-norm}, the identity $\left\|\bm{H}_{\mathsf{dilation}}\right\| = \max\{\|\bm{H}_1\|,\|\bm{H}_2\|\}$, and the fact $\sqrt{\mu/n}\leq 1$.

It then boils down to showing that $\min\{ \|\bm{u}\pm \bm{u}^\star\|_2 \} \lesssim \min\{\|\widetilde{\bm{u}}_1 \pm \widetilde{\bm{u}}_1^{\star}\|_{2} \}$. 
To this end, we see that the estimate \eqref{eq:rank-1-inferred-singular-vectors} satisfies
\begin{align}
\|\bm{u}-\bm{u}^\star\|_2  & = \left\| \left(\frac{\widetilde{\bm{u}}_{1,1}}{\left\|\widetilde{\bm{u}}_{1,1}\right\|_2}-\sqrt{2}\widetilde{\bm{u}}_{1,1} \right)+\left(\sqrt{2}\widetilde{\bm{u}}_{1,1}- \bm{u}^\star\right) \right\|_2 \nonumber \\
& \leq \left|\frac{1}{\left\|\widetilde{\bm{u}}_{1,1}\right\|_2}-\sqrt{2}\right| + \sqrt{2} \left\|\widetilde{\bm{u}}_{1,1}-\frac{\bm{u}^{\star}}{\sqrt{2}}\right\|_2  \nonumber \\
& = \frac{\sqrt{2}}{\left\|\widetilde{\bm{u}}_{1,1}\right\|_2} \left|\left\|\widetilde{\bm{u}}_{1,1}\right\|_2-\left\|\frac{\bm{u}^{\star}}{\sqrt{2}}\right\|_2\right| + \sqrt{2}\left\| \widetilde{\bm{u}}_{1,1}- \frac{\bm{u}^\star}{\sqrt{2}} \right\|_2 \nonumber \\
	& \leq \sqrt{2} \left( \frac{1}{\left\| \widetilde{\bm{u}}_{1,1} \right\|_2} + 1\right) \left\| \widetilde{\bm{u}}_{1,1}- \frac{\bm{u}^\star}{\sqrt{2}} \right\|_2 \label{eq:UB-13} \\
	& \leq \sqrt{2} \left(1+\frac{\sqrt{2}}{1-\sqrt{2} \left\Vert \widetilde{\bm{u}}_1 - \widetilde{\bm{u}}_1^\star \right\Vert _{2} }\right)\left\| \widetilde{\bm{u}}_{1}- \widetilde{\bm{u}}_1^\star \right\|_2 . \label{eq:UB-14}
\end{align}
Here, \eqref{eq:UB-13} makes use of the triangle inequality, and \eqref{eq:UB-14} follows since 
\[
\frac{1}{\left\Vert \widetilde{\bm{u}}_{1,1}\right\Vert _{2}}\leq\frac{1}{\|\bm{u}^{\star}\|_{2}/\sqrt{2}-\left\Vert \widetilde{\bm{u}}_{1,1}-\bm{u}^{\star}/\sqrt{2}\right\Vert _{2}}
	= \frac{\sqrt{2}}{1-\left\Vert \sqrt{2}\widetilde{\bm{u}}_{1,1}-\bm{u}^{\star}\right\Vert _{2}} 
	\leq \frac{\sqrt{2}}{1-  \sqrt{2} \left\Vert \widetilde{\bm{u}}_1 - \widetilde{\bm{u}}_1^\star \right\Vert _{2}} , 
\]
where the last inequality relies on the fact that $\left\|\widetilde{\bm{u}}_{1,1} - \bm{u}^{\star} / \sqrt{2} \right\|_2 \leq \left\|\widetilde{\bm{u}}_1 - \widetilde{\bm{u}}_1^\star\right\|_2$ (the first $n_1$ coordinates). 
Similarly, one can derive  the above bounds for $\|\bm{u}+\bm{u}^\star\|_2$ as well. Therefore, we are left with 
\begin{align}
	\min \left\{\left\|\bm{u} \pm \bm{u}^\star\right\|_2 \right\}  
	&\leq  \sqrt{2} \left(1+\frac{\sqrt{2}}{1-\sqrt{2}\min\{\|\widetilde{\bm{u}}_1 \pm \widetilde{\bm{u}}_1^{\star}\|_{2} \}}\right)\min\{\|\widetilde{\bm{u}}_1 \pm \widetilde{\bm{u}}_1^{\star}\|_{2} \} \\
	& \lesssim  \min\{\|\widetilde{\bm{u}}_1 \pm \widetilde{\bm{u}}_1^{\star}\|_{2} \} \\
	& \lesssim   \max\left\{B \log n, \sqrt{n \sigma^2 \log n}\right\}.  \label{eq:UB-26}
\end{align}
This concludes the proof.

\subsection{Perturbation bounds for linear forms of eigenvectors}
\label{sec:linear-form-dilation}

Fix any unit vector 
$\bm{a} \in \mathbb{R}^{n_1}$.  Considering the linear transformation by the vector $\begin{pmatrix} \bm{a} \\ \bm{0} \end{pmatrix}$,  we can apply Theorem \ref{thm_Linear_Forms_1_rank_r_A} to $\bm{M}_{\mathsf{dilation}}$ and get
\begin{align}
& \left|\begin{pmatrix} \bm{a} \\ 0 \end{pmatrix}^\top \left(\widetilde{\bm{u}}_1 - \frac{\widetilde{\lambda}_1^\star \widetilde{\bm{u}}_1^{\star\top} \widetilde{\bm{u}}_1}{\widetilde{\lambda}_1}\widetilde{\bm{u}}_1^\star-\frac{\widetilde{\lambda}_2^\star \widetilde{\bm{u}}_2^{\star\top} \widetilde{\bm{u}}_1}{\widetilde{\lambda}_1}\widetilde{\bm{u}}_2^\star\right)\right| 
	\lesssim \max\left\{B \log n, \sqrt{n \sigma^2 \log n}\right\}\sqrt\frac{\mu}{n} .
\end{align}
Given that the first $n_1$ coordinates of $\widetilde{\bm{u}}_1^{\star}$ and $\widetilde{\bm{u}}_2^{\star}$ are both $\bm{u}^{\star}/\sqrt{2}$, we can invoke Corollary \ref{cor:eigenvalue-pertubation-rank-2} (i.e.~$\widetilde{\lambda}_1 \asymp 1$) to obtain
\begin{align}
  \left|\bm{a}^\top \left(\widetilde{\bm{u}}_{1,1}-\frac{\widetilde{\bm{u}}_1^{\star\top} \widetilde{\bm{u}}_1}{\sqrt{2} \,\widetilde{\lambda}_1} \bm{u}^\star \right)\right| \lesssim \left|\frac{\widetilde{\bm{u}}_2^{\star\top} \widetilde{\bm{u}}_1}{\sqrt{2}}\right|\left|\bm{a}^\top \bm{u}^\star\right|+\max\left\{B \log n, \sqrt{n \sigma^2 \log n}\right\}\sqrt\frac{\mu}{n}.
\end{align}
In view of (\ref{eq:near-orthogonality}) and the fact $ \left|\bm{a}^\top \bm{u}^\star\right| \leq 1$, one has
\begin{equation}
	\left|\bm{a}^\top \left(\widetilde{\bm{u}}_{1,1}-\frac{\widetilde{\bm{u}}_1^{\star\top} \widetilde{\bm{u}}_1}{\sqrt{2} \, \widetilde{\lambda}_1} \bm{u}^\star \right)\right| \lesssim \max\left\{B \log n, \sqrt{n \sigma^2 \log n}\right\} 
	  \sqrt\frac{\mu}{n} .
\end{equation}

Recall that $\widetilde{\lambda}_1 > 0$ (cf.~Corollary \ref{cor:eigenvalue-pertubation-rank-2}). If we further have
\begin{equation} 
	\label{eq:entrywise-coef-rank-2-dilation}
\left| \frac{ \big| \widetilde{\bm{u}}_1^{\star\top} \widetilde{\bm{u}}_1 \big| }{\sqrt{2} \, \widetilde{\lambda}_1 \left\|\widetilde{\bm{u}}_{1,1}\right\|_2} -1\right| \lesssim \max\left\{B \log n, \sqrt{n \sigma^2 \log n}\right\},
\end{equation}
then we can use the triangle inequality to reach
\begin{equation}
\begin{aligned}
\min\big\{ \big|\bm{a}^{\top} (\bm{u} \pm \bm{u}^{\star})\big| \big\}  & \leq \frac{1}{\left\|\widetilde{\bm{u}}_{1,1}\right\|_2} \left|\bm{a}^\top \left(\widetilde{\bm{u}}_{1,1}-\frac{\widetilde{\bm{u}}_1^{\star\top} \widetilde{\bm{u}}_1}{\sqrt{2} \, \widetilde{\lambda}_1} \bm{u}^\star \right)\right| + \left| \frac{ \big| \widetilde{\bm{u}}_1^{\star\top} \widetilde{\bm{u}}_1  \big| }{\sqrt{2} \, \widetilde{\lambda}_1 \left\|\widetilde{\bm{u}}_{1,1}\right\|_2} - 1 \right| \left|\bm{a}^\top \bm{u}^\star \right| \\
& \lesssim \left( \sqrt{\frac{\mu}{n}} + |\bm{a}^{\top} \bm{u}^{\star}| \right) \max\left\{B \log n, \sqrt{n \sigma^2 \log n }\right\}
\end{aligned}
\end{equation}
as claimed.  It then remains to justify (\ref{eq:entrywise-coef-rank-2-dilation}). Towards this, it suffices to combine a series of consequences from \eqref{eq:UB-25},   Corollary \ref{cor:eigenvalue-pertubation-rank-2}, and \eqref{eq:UB-26}, namely,
\begin{align}
	1 - \left|\widetilde{\bm{u}}_1^{\star\top} \widetilde{\bm{u}}_1\right|& =  \frac{1}{2}\min\{\| \widetilde{\bm{u}}_1 \pm \widetilde{\bm{u}}_1^{\star} \|_2^2 \}  \lesssim  \max\left\{B \log n, \sqrt{n \sigma^2 \log n}\right\}, \\
\big|\widetilde{\lambda}_1 - \widetilde{\lambda}_1^\star\big| & = \big|\widetilde{\lambda}_1 - 1 \big| \lesssim \max\left\{B \log n, \sqrt{n \sigma^2 \log n}\right\} \sqrt \frac{\mu}{n}, \\
	\left|\left\|\widetilde{\bm{u}}_{1,1} \right\|_2 - \frac{1}{\sqrt{2}}\right| & \leq  \min\left\{ \left\|\widetilde{\bm{u}}_{1,1}  \pm \frac{\bm{u}^\star}{\sqrt{2}}\right\|_2 \right\} 
	\leq \min \left\{ \left\|\widetilde{\bm{u}}_1 \pm \widetilde{\bm{u}}_1^\star\right\|_2 \right\} \lesssim \max\left\{B \log n, \sqrt{n \sigma^2 \log n}\right\}.
\end{align}

The proof for the bounds on $\bm{v}$ is similar and is thus omitted.

\subsection{Entrywise eigenvector perturbation bounds}

Recognizing that $\left\|\bm{u} - \bm{u}^\star\right\|_{\infty} = \max_i \left|\bm{e}_i^\top \bm{u} - \bm{e}_i^\top \bm{u}^\star\right|$ and using the incoherence  $\left|\bm{e}_i^\top \bm{u}\right| \leq \sqrt{\mu/n}$, we can prove this claim directly by invoking the results established in Appendix \ref{sec:linear-form-dilation} and taking the union bound.


	\section{Asymmetrization of data samples: two examples}
\label{sec:asymmetrization-rank1}

As mentioned earlier, an independent and asymmetric noise matrix arises when we collect two samples for each entry of the matrix of interest and arrange the samples in an asymmetric manner (i.e.~placing 1 sample on the upper triangular part and the other on the lower triangular part). Interestingly, our results might be applicable for some cases where we only have 1 sample for each entry.  In what follows, we describe two examples of this kind similar to the ones discussed in  Section \ref{sec:applications}, but with a symmetric noise matrix.  Once again, it is  assumed that $\bm{M}^{\star}$ is a rank-1 matrix with leading eigenvalue 1 and incoherence parameter $\mu$.


\bigskip
\noindent {\bf Low-rank matrix estimation from Gaussian noise.}\footnote{We thank Prof.~Zhou Fan for telling us this example.} Suppose that the noise matrix $\bm{H}$ is a symmetric matrix, and that the upper triangular part of $\bm{H}$ is composed of $\mathrm{i.i.d.}$ Gaussian random variables $\mathcal{N}(0, \sigma^2)$.

When $\sigma$ is known, one can decouple the upper and low triangular parts of $\bm{H}$ by adding a {\em skew-symmetric} Gaussian matrix $\bm{\Delta}$. Specifically, our strategy is:
\begin{itemize}
	\item[(1)] For each  $1 \leq i \leq j \leq n$, generate $\Delta_{ij} \sim \mathcal{N}(0, \sigma^2)$ independently, and set  ${\Delta}_{ji} = -{\Delta}_{ij}$;
	
	\item[(2)] Compute the leading eigenvalue and eigenvector of $\bm{M} + \bm{\Delta}$.
\end{itemize}
%
This is motivated by a simple observation from Gaussianality: $\bm{H} + \bm{\Delta}$ is now an asymmetric matrix whose off-diagonal entries are i.i.d.~$\mathcal{N}(0, 2\sigma^2)$; in fact, it is easy to verify that $H_{ij}+\Delta_{ij}$ and $H_{ji}-\Delta_{ji}$ are independent Gaussian random variables. As a result, the orderwise bounds \eqref{eq:error-rank-1-Gauss-symm} continue to hold if $\lambda$ and $\bm{u}$ are taken to be the leading eigenvalue and eigenvector of $\bm{M}+\bm{\Delta}$, respectively.

While this asymmetrization procedure comes with the price of doubling the noise variance, the eigenvalue perturbation bound may still be significantly smaller --- up to a factor of $O(\sqrt{n})$ --- than  the bound for the SVD approach.   This is also confirmed in the numerical simulations reported in Fig.~\ref{fig:gaussian_asymmetrization}.

\begin{figure}[htbp!]
	\centering
	\begin{tabular}{ccc}
		\includegraphics[width=0.32\linewidth]{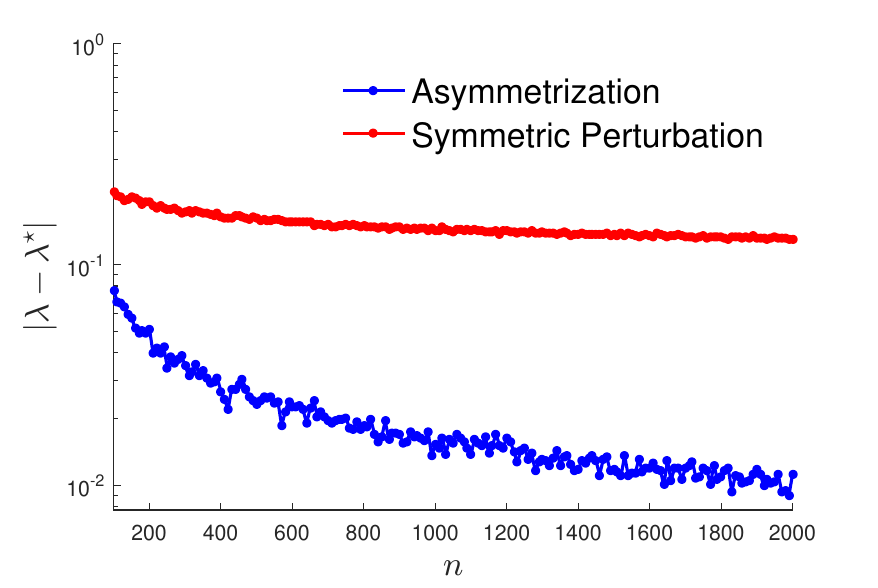}
		& \includegraphics[width=0.32\linewidth]{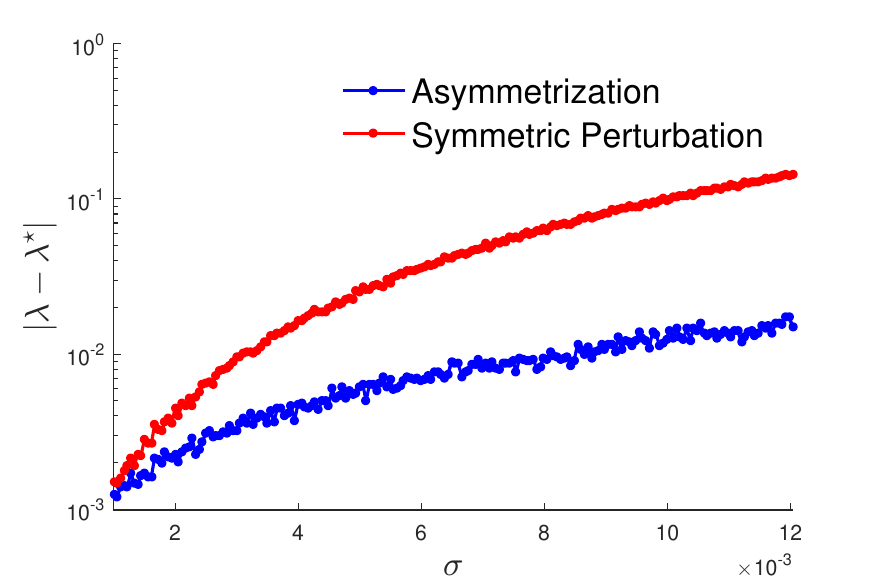}
		& \includegraphics[width=0.32\linewidth]{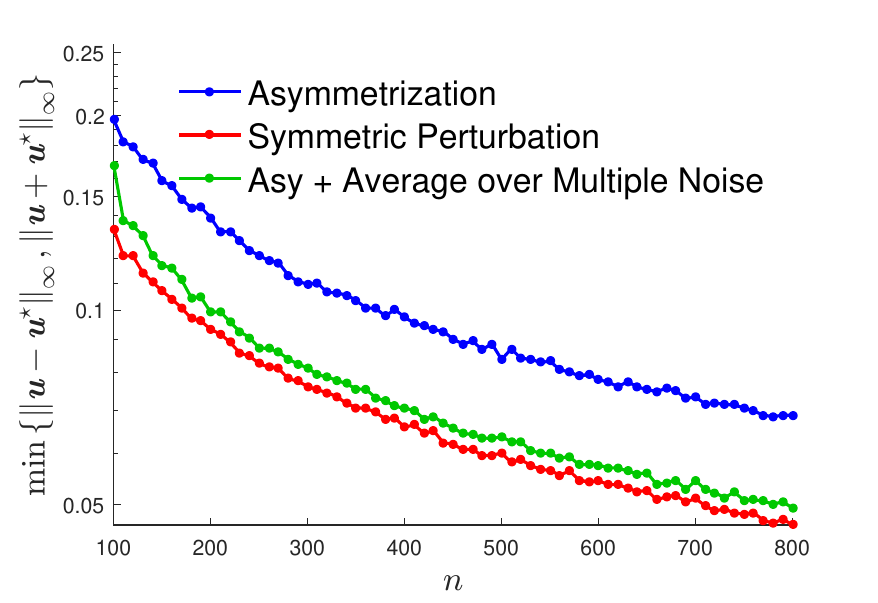}
	\tabularnewline
	(a) eigenvalue perturbation vs.~$n$ & (b) eigenvalue perturbation vs.~$\sigma$ & (c) $\ell_\infty$ eigenvector perturbation \tabularnewline
	\end{tabular}

	\caption{Numerical simulation for rank-1 matrix estimation when $\bm{H}$ is symmetric and its upper triangular part consists of i.i.d.~Gaussian entries $\mathcal{N}(0,\sigma^2)$. The truth $\bm{M}^{\star}$ is rank-1 and is randomly generated with leading eigenvalue $\lambda^{\star}=1$.  (a) $|\lambda-\lambda^{\star}|$ vs.~$n$ with $\sigma = 1 / \sqrt{n \log n}$; (b) $|\lambda-\lambda^{\star}|$ vs.~$\sigma$ with $n=1000$;  (c)  $\ell_{\infty}$ eigenvector perturbation vs.~$n$ with $\sigma = 1 / \sqrt{n \log n}$. We compare eigen-decomposition applied to symmetric matrix samples (the red lines) and asymmetrized data (the blue lines). In the green line, the estimate $\bm{u}$ is obtained by generating 10 independent copies of $\bm{\Delta}$  and aggregating the leading eigenvectors of these copies of $\bm{M}+\bm{\Delta}$ (see Remark \ref{remark:multiple-noise}).
	\label{fig:gaussian_asymmetrization}}
\end{figure}
\begin{remark}\label{remark:multiple-noise}
	While this asymmetrization procedure achieves enhanced  eigenvalue estimation accuracy compared to the SVD approach (see Fig.~\ref{fig:gaussian_asymmetrization}(a)(b)), it results in higher eigenvector estimation errors (see Fig.~\ref{fig:gaussian_asymmetrization}(c)). This is perhaps not surprising as we have added extra noise to the observed matrix.  One way to mitigate this issues is to (1) generate $K$ independent copies of $\bm{\Delta}$; (2) compute the leading eigenvector of each copy of $\bm{M}+\bm{\Delta}$, denoted by $\{\bm{u}^{(l)}\mid 1\leq l\leq K\}$; and (3) aggregate these eigenvectors, namely, compute the leading eigenvector of $\frac{1}{K} \sum_{l=1}^K \bm{u}^{(l)}\bm{u}^{(l)\top}$.  As can be seen in the green line of Fig.~\ref{fig:gaussian_asymmetrization}(c), this allows us to mitigate the effect of the extra noise component.
\end{remark}

\bigskip
\noindent {\bf Low-rank matrix completion.}
Suppose that each entry $M_{ij}^{\star}$ ($i\geq j$) is observed independently with probability $p$. This is different from the settings in Section \ref{sec:applications}, as we do not have additional samples for $M_{ji}^{\star}$ ($i\geq j$).  In order to arrange the data samples in an asymmetric and independent fashion, we employ a simple resampling technique to decouple the statistical dependency between $M_{ij}$ and $M_{ji}$:
\begin{itemize}
	\item[(1)] Define $\bm{M}^{\mathsf{sym}}$ such that
		\[
			M_{ij}^{\mathsf{sym}} = \begin{cases} \frac{1}{p} M_{ij}^{\star}, \quad & \text{if }M_{ij}^{\star}\text{ is observed}; \\ 0, & \text{else}.  \end{cases}
		\]
	\item[(2)] Define $p^{\mathsf{asym}}$ so that $p=1-(1-p^{\mathsf{asym}})^2$ (i.e.~$p^{\mathsf{asym}} = \frac{p}{ 1+\sqrt{1-p}}$).
		   For any pair $i> j$, set
	\begin{equation}
		\left(M_{ij}^{\mathsf{asym}}, M_{ji}^{\mathsf{asym}}\right)= \begin{cases} \frac{p}{p^{\mathsf{asym}}} \big( M_{ij}^{\mathsf{sym}}, 0 \big)  \qquad & \text{with probability } \frac{1-p^{\mathsf{asym}}}{2-p^{\mathsf{asym}}}, \\ \frac{p}{p^{\mathsf{asym}}} \big(0, M_{ij}^{\mathsf{sym}} \big)  \qquad & \text{with probability } \frac{1-p^{\mathsf{asym}}}{2-p^{\mathsf{asym}}}, \\ \frac{p}{p^{\mathsf{asym}}} \big( M_{ij}^{\mathsf{sym}}, M_{ij}^{\mathsf{sym}} \big)  \qquad & \text{with probability }  \frac{ p^{\mathsf{asym}}}{ 2-p^{\mathsf{asym}} }. \end{cases}
	\end{equation}
	For any $i$, set
	$$
	M_{ii}^{\mathsf{asym}}= \begin{cases}
	\frac{p}{p^{\mathsf{asym}}}M_{ii}^{\mathsf{sym}} \qquad & \text{with probability } \frac{p^{\mathsf{asym}}}{p}, \\
	0 \qquad & \text{else}.
	\end{cases}
	$$
	%

	%
	\item[(3)]  Compute the leading eigenvalue and eigenvector of $\bm{M}^{\mathsf{asym}}$.

\end{itemize}
%
%
As can be easily verified, this scheme is equivalent to saying that (i) with probability $p$,  either $M_{ij}^{\mathsf{asym}}$ or $M_{ji}^{\mathsf{asym}}$ is taken to be a rescaled version of $M_{ij}^{\star}$;   (ii) for any $i\neq j$, the entries $\{M^{\mathsf{asym}}_{ij}\}$ are independently drawn.
%
%
Since $p^{\mathsf{asym}}\asymp p$, our results \eqref{eq:error-rank-1-MC-symm} in  Section \ref{sec:applications} remain valid, as long as $\lambda$ and $\bm{u}$ are set to be the leading eigenvalue and eigenvector of $\bm{M}^{\mathsf{asym}}$, respectively.
Numerical simulations have been carried out in Fig.~\ref{fig:mc_asymmetrization} to verify the effectiveness of this scheme.
	
	\begin{figure}[htbp!]
		\centering
		\begin{tabular}{ccc}
		\includegraphics[width=0.32\linewidth]{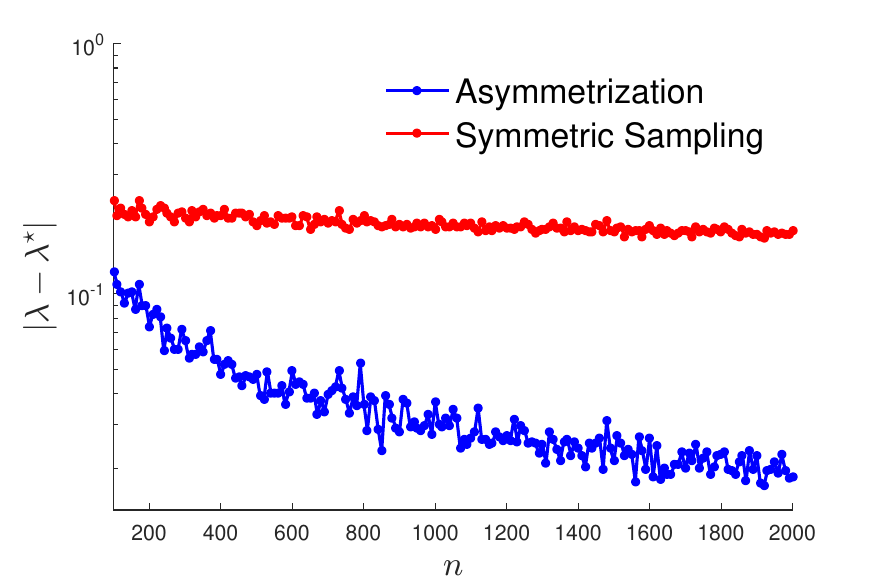}
		& \includegraphics[width=0.32\linewidth]{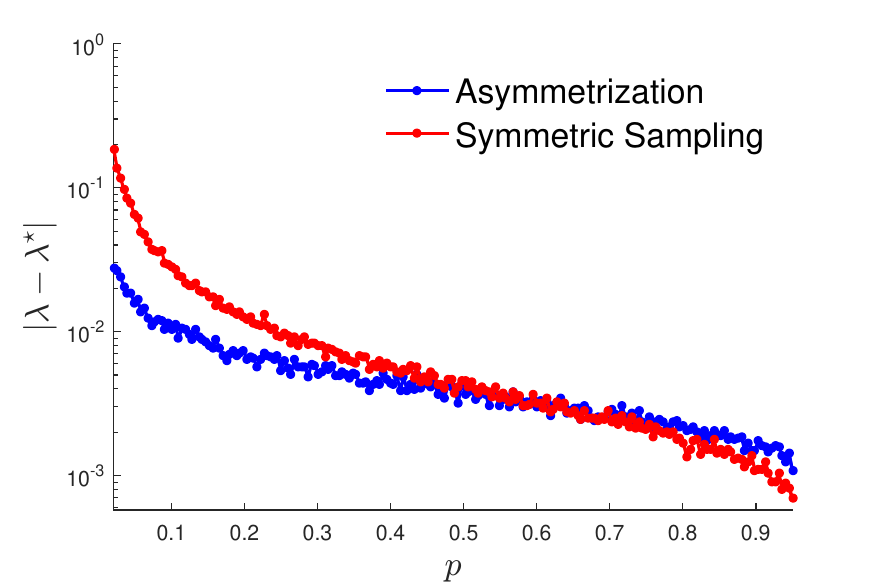}
		& \includegraphics[width=0.32\linewidth]{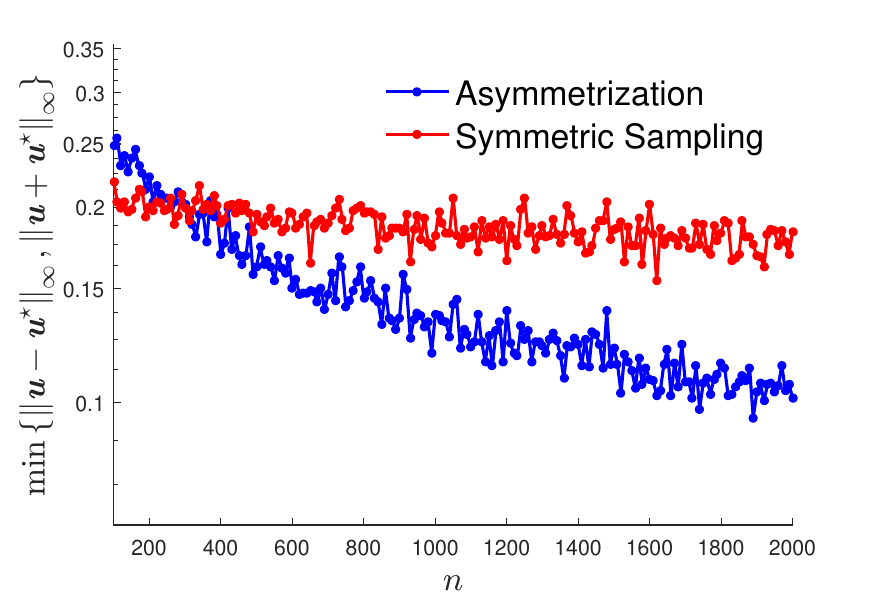}
		\tabularnewline
		(a) eigenvalue perturbation vs.~$n$ & (b) eigenvalue perturbation vs.~$p$ & (c) $\ell_\infty$ eigenvector perturbation \tabularnewline
		\end{tabular}

		\caption{Numerical experiments for rank-1 matrix completion, where the rank-1 truth $\bm{M}^{\star}$ is randomly generated with leading eigenvalue 1.  (a) $|\lambda-\lambda^{\star}|$ vs.~$n$ with $p =3\log n\,/\,n$; (b) $|\lambda-\lambda^{\star}|$ vs.~$p$ with $n=1000$; (c) $\ell_{\infty}$ perturbation error vs.~$n$ with $p =3\log n\,/\,n$. We compare eigen-decomposition applied to symmetric matrix samples and asymmetrized data.
		The blue (resp.~red) lines represent the average errors over 100 independent trials using the eigen-decomposition (resp.~SVD) approach.
		\label{fig:mc_asymmetrization}}
	\end{figure}

\end{document}